\documentclass[10pt]{amsart}
\usepackage{amsmath,amssymb,latexsym,esint,cite,mathrsfs}
\usepackage{verbatim,wasysym}
\usepackage[left=2.6cm,right=2.7cm,top=2.9cm,bottom=2.7cm]{geometry}
\usepackage{tikz,enumitem,graphicx, subfig, microtype, color}
\usepackage{epic,eepic}

\usepackage[colorlinks=true,urlcolor=blue, citecolor=red,linkcolor=blue,
linktocpage,pdfpagelabels, bookmarksnumbered,bookmarksopen]{hyperref}
\usepackage[hyperpageref]{backref}
\usepackage[english]{babel}

\numberwithin{equation}{section}

\newtheorem{thm}{Theorem}[section]
\newtheorem{lem}[thm]{Lemma}
\newtheorem{cor}[thm]{Corollary}
\newtheorem{Prop}[thm]{Proposition}

\newtheorem{Rem}[thm]{Remark}

\newcommand{\N}{\mathbb{N}}

\newcommand{\R}{\mathbb{R}}

\newcommand\cc{\mathcal{C}}

\def\de {\delta}

\def\ga {\gamma}

\def\vr {\varepsilon}
\def\va {\varphi}

\begin{document}

\title[Critical Choquard equations]{Semiclassical states for Choquard type equations with critical growth: critical frequency case}

\author[Y.\ Ding]{Yanheng Ding}
\author[F.\ Gao]{Fashun Gao}
\author[M.\ Yang]{Minbo Yang$^*$}

\address[Y.\ Ding]
{Institute of Mathematics,\newline\indent
 Academy of Mathematics and Systems Sciences\newline\indent
Chinese Academy of Sciences,\newline\indent
 Beijing 100080, P. R. China.}
\email{dingyh@math.ac.cn}

\address[F.\ Gao]{Department of Mathematics,\newline\indent
	Zhejiang Normal University, \newline\indent
	321004, Jinhua, Zhejiang, P. R. China}
\email{fsgao@zjnu.edu.cn}

\address[M.\ Yang]{Department of Mathematics,\newline\indent
	Zhejiang Normal University, \newline\indent
	321004, Jinhua, Zhejiang, P. R. China}
\email{mbyang@zjnu.edu.cn}

\subjclass[2010]{35J20,35J60, 35B33}
\keywords{Critical Choquard equation; Semiclassical states; Critical frequency}

\thanks{$^*$Minbo Yang is the corresponding author who is partially supported by NSFC(11571317,11671364); Yanheng Ding is partially supported by NSFC(11331010,11571146).}

\begin{abstract}
	In this paper we are interested in the existence of semiclassical states for the Choquard type equation
	$$
	-\vr^2\Delta u +V(x)u =\Big(\int_{\R^N} \frac{G(u(y))}{|x-y|^\mu}dy\Big)g(u)  \quad \mbox{in $\R^N$},
	$$
	where $0<\mu<N$, $N\geq3$, $\vr$ is a positive parameter and $G$ is the primitive of $g$ which is of critical growth due to the Hardy--Littlewood--Sobolev inequality. The potential function $V(x)$ is assumed to be nonnegative with $V(x)=0$ in some region of $\R^N$, which means it is of the critical frequency case. Firstly, we study a Choquard equation with double critical exponents and prove the existence and multiplicity of semiclassical states by the Mountain-Pass Lemma and the genus theory. Secondly, we consider a class of critical Choquard equation without lower perturbation, by establishing a global Compactness lemma for the nonlocal Choquard equation, we prove the multiplicity of high energy semiclassical states by the Lusternik--Schnirelman theory.
\end{abstract}

\maketitle

\begin{center}
	\begin{minipage}{8.5cm}
		\small
		\tableofcontents
	\end{minipage}
\end{center}
%

\section{Introduction and main results}
In this paper we are interested in the existence and multiplicity of semiclassical states for
the following nonlocal semilinear equation
\begin{equation}\label{SNS}
 -\vr^2\Delta u +V(x)u  =\Big(\int_{\R^N}  K(x-y)G(u(y))dy\Big)g(u).
\end{equation}
This type of equation is closely related to the nonlocal evolutional Schr\"{o}dinger equation
\begin{equation}\label{ESNS}
i\hbar\partial_{t} \Psi =-\frac{\hbar^2}{2m} \Delta \Psi
+W(x)\Psi-\Big(\int_{\R^N} K(x-y)G(\Psi)dy\Big)g(\Psi) ,\ \ \
x\in \R^N,
\end{equation}
where $m$ is the mass of the bosons, $\hbar$ is the planck constant, $G$ is the primitive of $g$, $W(x)$ is the external potential and $K(x)$ is the
function which possesses information on the mutual interaction
between the bosons. It is clear that
$\Psi(x,t)=u(x)e^{\frac{-iEt}{\hbar}}$  solves \eqref{ESNS} if and only if $u(x)$ solves
 equation \eqref{SNS}
with $V(x)=W(x)-E$ and $\vr^2=\frac{\hbar^2}{2m}$.

If the
response function $K(x)=\delta(x)$ the impulsive function, the
nonlinear response is local indeed, then the nonlinear equation \eqref{SNS} goes back to the classical local Schr\"{o}dinger equation
 $$
 -\vr^2\Delta u +V(x)u  =g(u)  \quad \mbox{in $\R^N$}.
$$
 As $\vr$ goes to zero, the study of the existence and asymptotic behavior of the solutions is known as the {\em semiclassical problem} which was used to describe the transition between
Quantum Mechanics and Classical Mechanics. In mathematical aspects the study of semiclassical problem goes back to the pioneer work \cite{FW} by Floer and Weinstein. Since then, it has been studied
extensively under various hypotheses on the potentials and the nonlinearities, see for example
 \cite{ABC, AMSS, BW, CL, CP, DF2, DL, DW, DF1, FM, JT, KW, R, WX, WZ, ZCZ} and
the references therein.

However nonlocality appears naturally in optical systems with a thermal
\cite{L} and it is known to influence the propagation of electromagnetic
waves in plasmas \cite{BC}.  Nonlocality also has
attracted considerable interest as a means of eliminating
collapse and stabilizing multidimensional solitary waves \cite{Ba} and it plays an important
role in the theory of Bose-Einstein condensation \cite{D} where it accounts for the finite-range many-body interactions. If $K(x)$ is a function of Riesz type $K(x)=\frac{1}{|x|^\mu}$, then we arrive at the singularly perturbed Choquard type equation\begin{equation}\label{SCPP}
-\vr^2\Delta u +V(x)u =\Big(\int_{\R^N} \frac{G(u(y))}{|x-y|^\mu}dy\Big)g(u)  \qquad \mbox{in $\R^N$},
\end{equation}
where $N\geq3$,  $0<\mu<N$. For  $\vr=1$, $N=3$, $g(u)=u$ and $\mu=1$, the equation
\begin{equation}\label{AP}
 -\Delta u +u =\Big(\int_{\R^3} \frac{|u(y)|^2}{|x-y|}dy\Big)u   \qquad \mbox{in $\R^3$}
\end{equation}
was introduced in mathematical physics by Pekar \cite{P1} to study the quantum theory of a polaron at rest. It was mentioned in \cite{Li1} that Choquard applied it as approximation to Hartree-Fock theory of one-component plasma. This equation was also proposed by Penrose in \cite{MPT} as a model of selfgravitating matter and is known in that context as the Schr\"odinger-Newton equation. Mathematically, Lieb \cite{Li1} and
Lions \cite{Ls} studied the existence  and uniqueness of
positive solutions to equation \eqref{AP}. The uniqueness and non-degeneracy of the ground states were proved in Lenzmann\cite{Len}, Wei
and Winter in \cite{WW}.

To study problem \eqref{SCPP} variationally, we will use the following Hardy--Littlewood--Sobolev inequality frequently, see \cite{LL}.
\begin{Prop}\label{HLS}
 (Hardy--Littlewood--Sobolev inequality).  Let $t,r>1$ and $0<\mu<N$ with $1/t+\mu/N+1/r=2$, $f\in L^{t}(\R^N)$ and $h\in L^{r}(\R^N)$. There exists a sharp constant $C(t,N,\mu,r)$, independent of $f,h$, such that
\begin{equation}\label{HLS1}
\int_{\mathbb{R}^{N}}\int_{\mathbb{R}^{N}}\frac{f(x)h(y)}{|x-y|^{\mu}}dxdy\leq C(t,N,\mu,r) |f|_{t}|h|_{r},
\end{equation}
where $|\cdot|_{q}$ for the $L^{q}(\mathbb{R}^{N})$-norm for $q\in[1,\infty]$. If $t=r=2N/(2N-\mu)$, then
$$
 C(t,N,\mu,r)=C(N,\mu)=\pi^{\frac{\mu}{2}}\frac{\Gamma(\frac{N}{2}-\frac{\mu}{2})}{\Gamma(N-\frac{\mu}{2})}\left\{\frac{\Gamma(\frac{N}{2})}{\Gamma(N)}\right\}^{-1+\frac{\mu}{N}}.
$$
In this case there is equality in \eqref{HLS1} if and only if $f\equiv Ch$ and
$$
h(x)=A(\gamma^{2}+|x-a|^{2})^{-(2N-\mu)/2}
$$
for some $A\in \mathbb{C}$, $0\neq\gamma\in\mathbb{R}$ and $a\in \mathbb{R}^{N}$.
\end{Prop}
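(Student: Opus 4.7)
The plan is to prove the non-sharp version of \eqref{HLS1} first, and only then chase the sharp constant and equality cases in the conformal (diagonal) regime $t=r=2N/(2N-\mu)$. For the non-sharp bound, I would pass to real-valued nonnegative $f,h$ (by majorizing moduli) and represent the left hand side as a convolution pairing $\langle f, |\cdot|^{-\mu}\ast h\rangle$. Since the Riesz kernel $|x|^{-\mu}$ belongs to the weak space $L^{N/\mu,\infty}(\R^N)$, I would invoke the weak Young inequality: for $1/t+\mu/N+1/r=2$, equivalently $1/r+\mu/N = 1+1/t'$, convolution against $|x|^{-\mu}$ maps $L^{r}(\R^N)$ to $L^{t'}(\R^N)$, whence H\"older closes the estimate. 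This gives \eqref{HLS1} with some finite constant depending only on $N,\mu,t,r$.

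Next, for the sharp constant in the diagonal case $t=r=2N/(2N-\mu)$, I would follow Lieb's strategy based on symmetrization and conformal invariance. Step one is the Riesz rearrangement inequality, which shows that the supremum of the functional
\[
J(f,h)=\int_{\R^N}\int_{\R^N}\frac{f(x)h(y)}{|x-y|^{\mu}}\,dx\,dy \Big/\bigl(|f|_t\,|h|_r\bigr)
\]
is attained, if at all, on symmetric decreasing functions. Step two is to establish existence of a maximizer: by the scaling and translation invariance of $J$ one first checks tightness up to these symmetries for a maximizing sequence, and then uses the compact embeddings available after restricting to the symmetric decreasing cone to extract a strong limit. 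Step three is the identification of the maximizer via Lieb's method of competing symmetries: conjugating by stereographic projection turns $J$ into a functional on $S^{N}$ that is invariant under the full conformal group, and intertwining rotations of the sphere with the Euclidean symmetric decreasing rearrangement forces a maximizing sequence to converge to a function fixed by both symmetry groups. The only common fixed points correspond, after inverse stereographic projection, to the one-parameter family $h(x)=A(\gamma^2+|x-a|^2)^{-(2N-\mu)/2}$. Plugging this profile into $J$ and computing the resulting beta-type integral yields the explicit constant $C(N,\mu)$ stated in the proposition.

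Finally, the equality statement is read off the same argument: any extremizing pair $(f,h)$ must, up to scalar multiples, be fixed (modulo the noncompact conformal symmetries) by both the rotation group of $S^{N}$ and the decreasing rearrangement, which forces $f\equiv Ch$ and pins down $h$ to the displayed form parametrized by $A\in\C$, $\gamma\neq 0$ and $a\in\R^{N}$.

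The principal obstacle is clearly the sharp constant and the equality characterization: the non-sharp bound is soft analysis, but extracting $C(N,\mu)$ in closed form requires the conformal compactification together with the competing symmetries argument, and it is here that tightness of maximizing sequences and the analysis of the fixed points of two noncommuting symmetry groups do the real work. For the present paper one may of course simply invoke \cite{LL}, where the full argument is carried out in detail.
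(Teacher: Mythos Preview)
Your outline is a faithful sketch of the standard proof (weak Young for the non-sharp bound, then Lieb's competing symmetries and conformal compactification for the sharp constant and extremizers), and there is no mathematical gap in the plan as described. However, the paper does not prove this proposition at all: it is stated as a known result and simply referenced to Lieb--Loss \cite{LL}. So your proposal goes well beyond what the paper does; if you want to match the paper, the last sentence of your write-up (``invoke \cite{LL}'') is already the entire argument.
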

Let $H^{1}(\R^N)$ be the usual Sobolev spaces with the norm
    \[
    \|u\|_{H^1}:=\left(\int_{\R^N}(|\nabla u|^2+ |u|^2)dx\right)^{1/2},
    \]
    $D^{1,2}(\R^N)$ be equipped with the norm
    $$
\|u\|:=\Big(\int_{\R^N}|\nabla u|^{2}dx\Big)^{\frac{1}{2}}
$$
and $L^s(\R^N)$, $1 \leq s \leq \infty$,
denotes the Lebesgue space with the norms
\begin{gather*}
| u |_s:=\Big(\int_{\R^N}|u|^sdx\Big)^{1/s}.
\end{gather*}
By the Hardy--Littlewood--Sobolev inequality and the Sobolev imbedding, the integral
$$
\int_{\mathbb{R}^{N}}\int_{\R^N}\frac{|u(x)|^{t}|u(y)|^{t}}{|x-y|^{\mu}}dxdy
$$
is well defined if
$$
\frac{2N-\mu}{N}\leq t\leq\frac{2N-\mu}{N-2}.
$$
Let
\begin{equation}\label{CES}
2_{\mu \ast}:=\frac{2N-\mu}{N}\ \hbox{and} \ 2_{\mu}^{\ast}:=\frac{2N-\mu}{N-2},
\end{equation}
in the rest of this paper we will call the exponent $2_{\mu \ast}$ the lower critical exponent, while $2_{\mu}^{\ast}$ the upper critical exponent. Recently, by using
the method of moving planes,  Ma and Zhao \cite{ML} studied
the classification of all positive solutions to the generalized
nonlinear Choquard problem
\begin{eqnarray}\label{limit equation3}
-\Delta u+u=\Big(\int_{\R^N} \frac{|u(y)|^p}{|x-y|^\mu}dy\Big)|u|^{p-2}u,
\end{eqnarray}
under some assumptions on $\mu$, $p$ and $N$,  they proved that
all the positive solutions of \eqref{limit equation3} must be radially
symmetric and monotone decreasing about some fixed point.
In \cite{MS1}, Moroz and Van
Schaftingen completely investigated the qualitative properties of solutions of \eqref{limit equation3} and showed the regularity, positivity
and radial symmetry decay behavior at infinity.
For autonomous equation
$$
-\Delta u+u =\left(\int_{\R^N}\frac{|u(y)|^{p}}{|x-y|^{\mu}}dy\right)|u|^{p-2}u \quad \mbox{in} \quad \R^N
$$
with $p=2_{\mu}^{\ast}$ or $p=2_{\mu \ast}$, one can follow the steps in \cite{GY, MS1} to establish the Poho\v{z}aev identity
$$
\frac{N-2}{2}\int_{\R^N} |\nabla u|^{2}dx+\frac{N}{2}\int_{\R^N} |u|^{2}dx=\frac{2N-\mu}{2p}\int_{\R^N}
\int_{\R^N}\frac{|u(x)|^{p}|u(y)|^{p}}{|x-y|^{\mu}}dxdy,
$$
then it is easy to see that there are no nontrivial solutions. Because the problem was set in $\R^N$ and the convolution type nonlinearities are of critical growth, it is quite difficult to study the critical Choquard equation \eqref{limit equation3} due to the loose of compact embedding. For the upper critical exponent case, a critical Choquard type equation on a bounded domain of $\R^N$, $N\geq 3$  was investigated in \cite{GY}, there the authors generalized the well-known results obtained in \cite{BN}. The critical Choquard equation set on the whole space was investigated in \cite{ACTY, AGSY}, where the authors studied the case $N=2$ with exponential critical growth and the case $N\geq 3$ with upper critical exponents separately.

The appearance of potential well function $V(x)$ influences the existence of solutions greatly. Consider
\begin{eqnarray}\label{PC1}
-\Delta u+V(x)u=\Big(\int_{\R^N} \frac{|u(y)|^p}{|x-y|^\mu}dy\Big)|u|^{p-2}u,
\end{eqnarray}
as we all know if the potential $V(x)$ is a continuous periodic function, the spectrum of the Schr\"{o}dinger operator $-\Delta +V$ is purely continuous and consists of a union of closed intervals. If $\inf_{\R^3} V(x)> 0$ and $
\frac{2N-\mu}{N}\leq p<\frac{2N-\mu}{N-2}
$, since the energy functional is invariant under translation, the existence of ground states by applying the Mountain Pass Theorem, see \cite{AC} for example. For the critical growth case, if $N\geq3$, the existence of ground states were obtained in \cite{AGSY} by applying the Brezis-Nirenberg methods.
The planar case was considered in \cite{ACTY}, where the authors first established the existence of ground state for the problem with critical exponential growth.
If $V(x)$ changes sign, the operator $-\Delta +V$ has essential spectrum below $0$ and then equation \eqref{CCE1} becomes strongly indefinite.
In contrast to the positive definite case, it becomes more complicated to study the strongly indefinite Choquard equation due to the appearance of convolution part. For $p=2$ and $\mu=1$, the existence of one nontrivial solution was obtained in \cite{BJS} by reduction arguments. For a general class of subcritical Choquard type equation
 \begin{equation}\label{WCh}
 -\Delta u +V(x)u =\Big(\int_{\R^N}W(x-y)|u(y)|^{p}dy\Big)|u|^{p-2}u  \quad \mbox{in} \quad \R^3,
\end{equation}
the existence of solutions was obtained in \cite{AC} by applying a generalized linking theorem, where $W(x)>0$ belongs to a wide class of functions. The author also proved the existence of infinitely many geometrically distinct weak solutions. For $N\geq4$ and $\mu$ lies in suitable range, Gao and Yang \cite{GY2} considered the strongly indefinite case and obtain the existence of nontrivial solution by applying the generalized linking theorem. For the Choquard equation with lower critical exponent, Moroz and Van Schaftingen \cite{MS4} studied the existence of solutions by perturbing the linear part suitably. For recent progress on the study of the Choquard equation we may refer the readers to \cite{ANY, guide} for details.

The semiclassical problem for the Choquard equation has also attracted a lot interest recently. As far as we know there are some papers considered the equation of the type
 \begin{equation}\label{SCP}
-\vr^2\Delta u +V(x)u =\vr^{\mu-N}\Big(\int_{\R^N} \frac{Q(y)G(u(y))}{|x-y|^\mu}dy\Big)Q(x)g(u)  \qquad \mbox{in $\R^N$}.
\end{equation}
 It can be observed that if $u$ is a solution of the nonlocal equation (\ref{SCP}), for $x_0\in\R^N$, the function $v=u(x_0+\vr x)$ satisfies
 \begin{equation}\label{EP1}
-\Delta v +V(x_0+\vr x)v  =\Big(\int_{\R^N} \frac{Q(x_0+\vr y)G(v(y))}{|x-y|^\mu}dy\Big)Q(x_0+\vr x)g(v) \qquad \mbox{in $\R^N$}.
\end{equation}
 It suggests some convergence, as $\vr\to0$, of the family of solutions to a solution $u_0$ of the limit problem
\begin{equation*}
-\Delta v +V(x_0)v=Q^2(x_0)\Big(\int_{\R^N} \frac{G(v(y))}{|x-y|^\mu}dy\Big)g(v) \qquad \mbox{in $\R^N$}.
\end{equation*}
Hence we know that the equation
\begin{equation}\label{CC}
-\Delta u +u  =\Big(\int_{\R^N} \frac{G(v(y))}{|x-y|^\mu}dy\Big)g(u) \qquad \mbox{in $\R^{3}$}
\end{equation}
plays the role of limit equation in the study of the semiclassical problems for Choquard equation. To apply the Lyapunov-Schmidt reduction techniques, it relies a lot on the uniqueness and non-degeneracy of the ground states of the limit problem which is not completely known for the nonlocal Choquard equation \eqref{CC}.
If $\inf V>0$, Wei
and Winter also constructed families of solutions
 by a Lyapunov-Schmidt type reduction  for
  \begin{equation}\label{SCP0}
-\vr^2\Delta u +V(x)u =\vr^{-2}\Big(\int_{\R^3} \frac{|u(y)|^2}{|x-y|}dy\Big)u,  \qquad \mbox{in $\R^3$}.
\end{equation}
 Cingolani et.al. \cite{CCS} applied the penalization arguments due to Byeon and Jeanjean \cite{BJ} and showed that there exists a family of solutions
having multiple concentration regions which are located around the minimum points of the potential. The result in \cite{CCS} was recently generalized to nonlinearities of Berestycki-Lions type by Yang, Zhang and Zhang \cite{YZZ}, the authors also established the existence of multi-peak solutions.  For any $N\geq 3$ and $G(u)=u^p$ with $2_{\mu\ast}\leq p<2_{\mu}^{\ast}$ in \eqref{SCP}, Moroz and Van Schaftingen \cite{MS3} developed a nonlocal penalization technique and showed that equation
\begin{equation}\label{SCP1}
-\vr^2\Delta u +V(x)u =\vr^{\mu-N}\Big(\int_{\R^N} \frac{|u(y)|^p}{|x-y|}dy\Big)|u|^{p-2}u,  \qquad \mbox{in $\R^N$}
\end{equation}
has a family of solutions concentrating around the local minimum of $V$ with $V$ satisfying some additional assumptions at infinity. In \cite{AY1, AY2}, by applying penalization method and Lusternik-Schnirelmann theory, Alves and Yang proved the existence, multiplicity and concentration of solutions for the equation \eqref{SCP} with subcritical nonlinearities. For the critical growth case, the authors in \cite{AGSY} studied
	$$
	-\vr^2\Delta u +V(x)u =\vr^{\mu-3}\Big(\int_{\R^3} \frac{Q(y)G(u(y))}{|x-y|^\mu}dy\Big)Q(x)g(u)  \quad \mbox{in $\R^3$},
	$$
	where $0<\mu<3$, $\vr$ is a positive parameter, $V,Q$ are two continuous real function on
	$\R^3$. Assuming that $g$ which is of upper critical growth, the authors  established the existence and multiplicity of semiclassical states and then characterized the concentration behavior around the the global minimum set of $V$ or the global maximum set of $Q$.
The planar case was investigated in \cite{ACTY}, there the authors observed the concentration of the maximum points of the solutions around the global minimum set of the potential.

 In the present paper we continue to study the semiclassical problem for the Choquard equation, but instead of problem \eqref{SCP}, we are going to study problem \eqref{SCPP}.
We must point out that problem \eqref{SCPP} is quite different from \eqref{SCP}, since there is no small parameter in front of the nonlinear convolution part. For cubic type nonlinearities $G(u)=u^p$ with $2_{\mu\ast}\leq p<2_{\mu}^{\ast}$, by setting $v(x)=\vr^{\frac{\mu-N}{2(p-1)}}u(x_0+ \vr x)$, equation \eqref{SCPP} can still be transformed into an equivalent form \eqref{EP1}. However, this scaling transformation does not apply for  equation \eqref{SCPP} with general nonlinearities, even for $g(u)=u^p+u^q,\ p\neq q$. Involving the existence of semiclassical states of equation \eqref{SCPP}, there are not so many results.
 If the nonlinearity is subcritical and the potential $V(x)$ is of critical frequency, i.e. $V(x)\geq0, \min V(x)=0$, Yang and Ding \cite{YD} considered the equation
 $$
-\vr^2\Delta u +V(x)u  =\Big(\int_{\R^3} \frac{u^p(y)}{|x-y|^\mu}dy\Big)u^{p-1} \quad \mbox{in $\R^3$}
$$
with critical frequency $\inf V(x)=0$ and $2_{\mu\ast}\leq p<2_{\mu}^{\ast}$, as a particular case the authors proved the existence of solutions for lower critical exponent case.
This type of problem has also been considered by  Van Schaftingen and Xia \cite{SX}, the authors characterized the concentration behavior of the solutions for $2_{\mu\ast}< p<2_{\mu}^{\ast}$ and $V(x)$ satisfies some more assumptions. Cingolani and Secchi \cite{CS} studied the semiclassical limit for the pseudo-relativistic Hartree equation
 $$
\sqrt{-\vr^2\Delta +m^2 }u +V(x)u  =\Big(\int_{\R^N} \frac{u^p(y)}{|x-y|^\mu}dy\Big)u^{p-1} \quad \mbox{in $\R^N$},
$$
by using the local realization by means of Dirichlet-to-Neumann operator, they were able to establish the existence of single-spike solution concentrating around the local minimum set of $V(x)$ by penalization techniques.

From the comments above, we know that the existing results for problem \eqref{SCPP} with critical frequency $\inf V(x)=0$ are all about the subcritical case. And there seems no existence and multiplicity results for problem \eqref{SCPP} with both of the upper critical growth and critical frequency. Then it is quite natural to ask if we can establish existence and multiplicity of standing waves with critical frequency for the Choquard equation with upper critical growth. The aim of the present paper is to give a positive answer to the this question  and we are going to study how the behavior of the nonnegative potentials and the upper critical exponent will affect the existence and multiplicity of semiclassical states of problem \eqref{SCPP}.

Firstly we are going to study the existence, multiplicity of semiclassical states of the Choquard equation with double critical exponents, that is
\begin{equation}\label{SCP2}
-\vr^2\Delta u+V(x)u =\Big(\int_{\R^N}\frac{|u(y)|^{2_{\mu}^{\ast}}}{|x-y|^{\mu}}dy\Big)|u|^{2_{\mu}^{\ast}-2}u +\Big(\int_{\R^N}\frac{|u(y)|^{2_{\mu\ast}}}{|x-y|^{\mu}}dy\Big)|u|^{2_{\mu \ast}-2}u  \quad \mbox{in} \quad \R^N.
\end{equation}
Under some flatness assumption on the potential, we find that the combination of the upper critical exponent and the lower critical exponent will still lead to the existence and multiplicity of semiclassical states with small energy depending on the parameter $\vr$.

In order to state the main results, we assume that $0<\mu<N$ and the potential $V(x)$ satisfy
\begin{itemize}
\item[$(V_1)$] $V\in \mathcal{C}(\R^N)$ and
there is $b>0$ such that the set $\mathcal{V}^b:=\{x\in\R^N: V(x)<b\}$ has
finite Lebesgue measure.
 \item[$(V_2)$] $0=V(0)\leq V(x),\ x\in \R^N$.
 \item[$(V_3)$] There exists $0<\tau\leq\frac{1}{N-\mu+2}$ such that
$$
 \displaystyle\lim_{|x|\to0}\frac{V(x)}{|x|^{\frac{1-2\tau}{\tau}}}=0.$$
 \end{itemize}
 \begin{Rem}
 In fact the assumption $(V_3)$ implies that
 $$
 V(x)=o(|x|^\vartheta),\ \  \hbox{as}\ \ |x|\to0, \ \ \vartheta\in [N-\mu, +\infty).
 $$ 
 \end{Rem}
Under the assumptions above we can state the existence result as follow:
\begin{thm}\label{1}
Let conditions $(V_1)-(V_3)$ be satisfied. Then for any
$\de>0$ there is $\mathcal{E}_\de>0$ such that if
$\vr\leq\mathcal{E}_\de$, equation \eqref{SCP2} has at least one ground state solution $u_\vr$ satisfying
$$
\int_{\R^N}\Big(\vr^2|\nabla u_\vr|^2+ V(x)|u_\vr|^2\Big)dx\leq  \de2^{\frac{2N-\mu}{N-\mu}}\vr^{\frac{2(1-2\tau)(2N-\mu)}{N-\mu}}.
$$
Furthermore, there exists constant $C_0$ such that
$$
\|u_\vr\|^2_{H^1}\leq  \de C_0\vr^{\frac{2N(1-4\tau)+4\mu\tau}{N-\mu}},
$$
since $0<\tau\leq\frac{1}{N-\mu+2}$, then $2N(1-4\tau)+4\mu\tau>0$ which means that $u_\vr\to 0$ in $H^1(\R^N)$ as $\vr\to 0$.
\end{thm}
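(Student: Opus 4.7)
The plan is to recast \eqref{SCP2} variationally and apply a Mountain Pass argument on the Hilbert space
\[
E_\vr := \Bigl\{u \in D^{1,2}(\R^N):\int_{\R^N} V(x)u^2\,dx < \infty\Bigr\},\qquad
\|u\|_\vr^2 := \int_{\R^N}\bigl(\vr^2|\nabla u|^2 + V(x)u^2\bigr)dx,
\]
with energy functional
\[
I_\vr(u) = \tfrac{1}{2}\|u\|_\vr^2 - \tfrac{1}{2\cdot 2_\mu^\ast}\iint\tfrac{|u(x)|^{2_\mu^\ast}|u(y)|^{2_\mu^\ast}}{|x-y|^\mu}dx\,dy - \tfrac{1}{2\cdot 2_{\mu\ast}}\iint\tfrac{|u(x)|^{2_{\mu\ast}}|u(y)|^{2_{\mu\ast}}}{|x-y|^\mu}dx\,dy.
\]
First I would use the Hardy--Littlewood--Sobolev inequality (Proposition \ref{HLS}) together with the Sobolev embedding $D^{1,2}\hookrightarrow L^{2^\ast}$ to show that both convolution integrals are of class $C^1$ and satisfy $I_\vr(u)\ge \tfrac12\|u\|_\vr^2 - C_1\vr^{-\alpha_1}\|u\|_\vr^{2\cdot 2_\mu^\ast}- C_2\vr^{-\alpha_2}\|u\|_\vr^{2\cdot 2_{\mu\ast}}$ for suitable $\alpha_1,\alpha_2\ge 0$. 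Since both $2\cdot 2_\mu^\ast>2$ and $2\cdot 2_{\mu\ast}=2(2N-\mu)/N>2$, this yields the local coercivity required for the mountain-pass geometry, while the fact that $2\cdot 2_\mu^\ast>2$ provides the ``falling'' ray $I_\vr(tu_0)\to -\infty$. Denote the mountain-pass level by $c_\vr$.

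The crucial step is the upper bound $c_\vr \le \tfrac{1}{2}\delta\, 2^{\frac{2N-\mu}{N-\mu}}\vr^{\frac{2(1-2\tau)(2N-\mu)}{N-\mu}}$ coming from an explicit test function concentrated where $V$ vanishes. Fix $\varphi\in C_c^\infty(\R^N)$, $\varphi\ge 0$, supported near $0$, and consider the two-parameter family $\psi_{\vr}(x) = A\,\varphi\bigl(\vr^{-\beta}x\bigr)$. By $(V_3)$ one has $V(x)\le \eta |x|^{(1-2\tau)/\tau}$ for $|x|$ small, and a change of variables shows that balancing the powers of $\vr$ arising from the gradient term and from the weighted potential term forces the sharp choice $\beta=2\tau$; then
\[
\|\psi_\vr\|_\vr^2 \lesssim \vr^{2-2\alpha+2(N-2)\tau}+\eta\,\vr^{-2\alpha+2\tau N+2(1-2\tau)}.
\]
Choosing $A$ optimally on the Nehari-type ray $t\mapsto I_\vr(t\psi_\vr)$ (whose maximum one can compute because only pure powers of $t$ appear) and collecting the homogeneities of the two HLS terms gives exactly the exponent $\tfrac{2(1-2\tau)(2N-\mu)}{N-\mu}$; the prefactor can be made smaller than $\tfrac12\delta\, 2^{\frac{2N-\mu}{N-\mu}}$ by taking $\vr$ small, because $\eta\to 0$ from $(V_3)$. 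This is where the restriction $\tau\le 1/(N-\mu+2)$ enters, guaranteeing that the lower-critical convolution term rather than the upper-critical one drives the balance.

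Compactness of $(PS)_{c_\vr}$-sequences is where one exploits $(V_1)$ together with the smallness of $c_\vr$. From $(V_1)$ and standard arguments (cf.\ Bartsch--Wang), $E_\vr$ embeds compactly into $L^p_{\mathrm{loc}}$ for any $2\le p<2^\ast$, and, combined with the finite measure of $\cv^b:=\{V<b\}$, it controls the vanishing of PS-sequences outside large balls. Critical concentration would cost energy at least $c_{\mathrm{HLS}}^\ast>0$ for the upper-critical bubble and $c_{\mathrm{HLS}\,\ast}>0$ for the lower-critical profile, both strictly positive universal constants. Since $c_\vr\to 0$, for $\vr\le\ce_\delta$ no concentration is possible, and any $(PS)_{c_\vr}$-sequence converges strongly (up to subsequence), producing a nontrivial critical point $u_\vr$ with $I_\vr(u_\vr)=c_\vr$. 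Standard minimization over the Nehari set then identifies a ground state.

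Finally, the two displayed estimates on $u_\vr$ follow by routine extraction: testing $I_\vr'(u_\vr)=0$ against $u_\vr$ and combining with $I_\vr(u_\vr)=c_\vr$ yields, via the lower exponent $2_{\mu\ast}$, a Nehari-type inequality $\|u_\vr\|_\vr^2 \le \tfrac{2(2N-\mu)}{N-\mu}c_\vr$, which gives the first bound. For the $H^1$-estimate one splits $\int|u_\vr|^2 = \int_{\cv^b} +\int_{\R^N\setminus \cv^b}$, estimating the second integral by $b^{-1}\int V|u_\vr|^2 \le b^{-1}\|u_\vr\|_\vr^2$, and the first by H\"older with the Sobolev embedding and the finiteness of $|\cv^b|$, together with the control $\vr^2\int|\nabla u_\vr|^2\le \|u_\vr\|_\vr^2$; collecting powers of $\vr$ gives the stated exponent $\tfrac{2N(1-4\tau)+4\mu\tau}{N-\mu}$, which is positive thanks to $\tau\le 1/(N-\mu+2)$.

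The main obstacle is the second paragraph: one must align three different scalings (the semiclassical parameter $\vr$, the spatial dilation $\vr^{\beta}$ and the amplitude $A$) so that the weighted potential contribution is absorbed by $(V_3)$ and the resulting mountain-pass level matches exactly the exponent $\tfrac{2(1-2\tau)(2N-\mu)}{N-\mu}$, with an arbitrarily small prefactor. A secondary difficulty is to confirm that the double critical growth does not produce a concentration threshold below the just-constructed test-level; this is what forces the upper restriction on $\tau$.
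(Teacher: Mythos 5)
Your overall strategy (Mountain Pass on the $V$-weighted space, a test function $\varphi(\vr^{-2\tau}x)$ concentrated at the zero of $V$, Nehari-type estimates, then verifying Palais--Smale at the mountain-pass level) is the same as the paper's, and the test-function computation in your second paragraph lands on the right exponent. However, the compactness step as you describe it contains a genuine gap.

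You assert that ``critical concentration would cost energy at least $c_{\mathrm{HLS}}^\ast>0$ \dots both strictly positive universal constants. Since $c_\vr\to 0$, for $\vr\le\mathcal{E}_\delta$ no concentration is possible.'' This is false: because of the $\vr^2$ in front of the Laplacian, the level at which $(PS)$ can fail is \emph{not} $\vr$-independent. A minimizing bubble for $-\vr^2\Delta u=(\,\cdot\,\ast|u|^{2^\ast_\mu})|u|^{2^\ast_\mu-2}u$ carries energy $\frac{N-\mu+2}{2(2N-\mu)}S_{H,L}^{\frac{2N-\mu}{N+2-\mu}}\,\vr^{\frac{2(2N-\mu)}{N+2-\mu}}$, which also tends to $0$ as $\vr\to 0$. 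So the observation ``$c_\vr\to 0$'' alone proves nothing; what one actually has to do is compare the two rates. The paper's Lemma \ref{PSlemma:3} gives the threshold $\frac{N-\mu+2}{2(2N-\mu)}S_{H,L}^{\frac{2N-\mu}{N-\mu+2}}\vr^{\frac{2(2N-\mu)}{N-\mu+2}}$, while Lemma \ref{AF} gives $c_\vr\le C\delta\,\vr^{\frac{2(1-2\tau)(2N-\mu)}{N-\mu}}$, and the hypothesis $\tau\le\frac{1}{N-\mu+2}$ is exactly what guarantees $\frac{2(1-2\tau)(2N-\mu)}{N-\mu}\ge\frac{2(2N-\mu)}{N-\mu+2}$, so that for $\delta$ small the mountain-pass value sits strictly below the concentration level. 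You correctly sense that $\tau\le 1/(N-\mu+2)$ ``forces'' something, but you attach it to the test-level construction rather than to this rate comparison, which is where it actually does the work.

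A second, related point: inside the $(PS)$ argument the lower-critical convolution term must be absorbed, and this is done via the finite-measure set $\{V<b\}$ together with the estimate $\int_{\R^N}|w_n|^2\,dx\le\eta$ for $\vr$ small (the paper's Lemma \ref{T0}). That estimate itself hinges on $c_\vr=o(\vr^2)$, i.e.\ $\frac{2(1-2\tau)(2N-\mu)}{N-\mu}>2$, which is yet another place $\tau\le 1/(N-\mu+2)$ is needed. Your appeal to ``compact embedding into $L^p_{\mathrm{loc}}$ plus finite measure of $\mathcal{V}^b$'' glosses over this; without the quantitative smallness of $\int|w_n|^2$ relative to $b/C(N,\mu)$, the lower-critical term cannot be cancelled against the $V_b$-term in the Cherrier-type inequality. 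Also, the ``lower-critical profile'' you invoke is not a concentration bubble in the dilation sense (the lower critical exponent is not conformally invariant in $D^{1,2}$); the loss of compactness at $2_{\mu\ast}$ is of vanishing type, which is precisely what the $L^2$-smallness argument handles.

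The final extraction of the $\|\cdot\|_\vr$ and $H^1$ bounds in your last paragraph is fine and matches the paper.
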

We also have the multiplicity result for the critical Choquard equation.
\begin{thm}\label{2}
Let conditions $(V_1)-(V_3)$ be satisfied. Then,
for any $m\in\N$ and $\de>0$, there is $\mathcal{E}_{m\de}>0$ such
that \eqref{SCP2} has at least $m$ pairs of solutions $u_\vr$ satisfying
$$
\int_{\R^N}\Big(\vr^2|\nabla u_\vr|^2+ V(x)|u_\vr|^2\Big)dx\leq  m\de2^{\frac{2N-\mu}{N-\mu}}\vr^{\frac{2(1-2\tau)(2N-\mu)}{N-\mu}}.
$$
Furthermore, there exists constant $C_0$ such that
$$
\|u_\vr\|^2_{H^1}\leq  \de C_0\vr^{\frac{2N(1-4\tau)+4\mu\tau}{N-\mu}},
$$
which means that $u_\vr\to 0$ in $H^1(\R^N)$ as $\vr\to 0$.
\end{thm}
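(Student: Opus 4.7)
My plan is to upgrade the Mountain-Pass argument underlying Theorem \ref{1} to an equivariant minimax scheme based on the Krasnoselskii genus, exploiting the evenness of the functional. I work in the weighted Sobolev space $E_\vr := \{u \in H^1(\R^N) : \int_{\R^N} V(x)|u|^2 dx < \infty\}$ endowed with $\|u\|_\vr^2 = \int_{\R^N}(\vr^2|\nabla u|^2 + V(x)u^2)dx$; by $(V_1)$ the embedding $E_\vr \hookrightarrow L^q(\R^N)$ holds for $q$ in the range needed by the Hardy--Littlewood--Sobolev inequality. The associated energy
\[
I_\vr(u) := \frac{1}{2}\|u\|_\vr^2 - \frac{1}{2\cdot 2_{\mu}^{\ast}}\iint\frac{|u(x)|^{2_{\mu}^{\ast}}|u(y)|^{2_{\mu}^{\ast}}}{|x-y|^\mu}dxdy - \frac{1}{2\cdot 2_{\mu \ast}}\iint\frac{|u(x)|^{2_{\mu \ast}}|u(y)|^{2_{\mu \ast}}}{|x-y|^\mu}dxdy
\]
is $C^1$ and even, so its nonzero critical points come in antipodal pairs $\{\pm u\}$. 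Exactly as in the proof of Theorem \ref{1}, the Hardy--Littlewood--Sobolev inequality together with a concentration--compactness analysis produces a threshold $c^\ast(\vr)$ below which $I_\vr$ satisfies $(PS)_c$; the single-peak estimates already show $c^\ast(\vr) \gtrsim \vr^{2(1-2\tau)(2N-\mu)/(N-\mu)}$ with a constant independent of $m$.

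The core step is the construction, for each $m \in \N$ and $\de > 0$, of a compact symmetric set $A_m \subset E_\vr$ with Krasnoselskii genus $\gamma(A_m) \geq m$ satisfying
\[
\sup_{u \in A_m} I_\vr(u) \leq \tfrac{m\de}{2}\,2^{\frac{2N-\mu}{N-\mu}}\,\vr^{\frac{2(1-2\tau)(2N-\mu)}{N-\mu}}.
\]
Pick $m$ pairwise disjointly supported bumps $\varphi_1,\dots,\varphi_m \in C_c^\infty(B_{R_0}(0))$ concentrated near the origin (where $V$ vanishes by $(V_2)$) and rescale at the scale dictated by $(V_3)$, for instance $\varphi_{i,\vr}(x) := \varphi_i(\vr^{-\tau}x)$ or an analogous choice that optimises the interplay between the kinetic and potential parts. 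Using the vanishing rate of $V$ supplied by $(V_3)$, both the kinetic and the potential contributions to $\|\varphi_{i,\vr}\|_\vr^2$ are of order $\vr^{2(1-2\tau)(2N-\mu)/(N-\mu)}$, while both nonlocal terms in $I_\vr$ stay of strictly higher order in $\vr$ on bounded linear combinations. Since the supports are disjoint, $\mathrm{span}\{\varphi_{1,\vr},\dots,\varphi_{m,\vr}\}$ is genuinely $m$-dimensional; its intersection with an appropriate sphere has genus exactly $m$, and a radial deformation inside the target sublevel set gives the desired $A_m$.

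With this in hand, set $\Sigma_k := \{A \subset E_\vr : A \text{ compact, symmetric, } \gamma(A) \geq k\}$ and $c_{k,\vr} := \inf_{A\in\Sigma_k}\sup_{u\in A} I_\vr(u)$ for $k=1,\dots,m$, so that $0 < c_{1,\vr} \leq \dots \leq c_{m,\vr}$ and, for $\vr$ small enough, every $c_{k,\vr}$ lies strictly below $c^\ast(\vr)$. The classical Krasnoselskii--Clark equivariant deformation lemma then makes each $c_{k,\vr}$ a critical value; when consecutive values coincide, the genus of the corresponding critical set is at least two and supplies the missing pair by standard Lusternik--Schnirelman counting. This produces $m$ distinct antipodal pairs $\pm u_\vr$ with $I_\vr(u_\vr) \leq c_{m,\vr}$; combining this with $\langle I_\vr'(u_\vr), u_\vr\rangle = 0$ and the nonnegativity of the two convolution integrals yields the first announced bound on $\int(\vr^2|\nabla u_\vr|^2 + V u_\vr^2)dx$, and the $H^1$ bound then follows from $(V_1)$ together with the Hardy--Littlewood--Sobolev inequality controlling the remaining $L^2$ mass. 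The principal obstacle I anticipate is the test-function construction: matching the kinetic scaling against the decay of $V$ prescribed by $(V_3)$ is what fixes both the exponent in the theorem and the admissible range $\tau \leq 1/(N-\mu+2)$, and any slack in this balance would either violate the sublevel bound or push $c_{m,\vr}$ above the delicate double-critical compactness threshold.
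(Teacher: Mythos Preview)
Your overall architecture---even functional, disjointly supported bumps spanning an $m$-dimensional subspace, genus-type minimax below the compactness threshold---is the same as the paper's. But two of your key assertions are wrong as stated, and either one breaks the proof.

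\textbf{The test-function mechanism is inverted.} You claim that on the rescaled bumps ``both nonlocal terms in $I_\vr$ stay of strictly higher order in $\vr$'' while $\|\varphi_{i,\vr}\|_\vr^2$ itself is of order $\vr^{2(1-2\tau)(2N-\mu)/(N-\mu)}$. If the nonlocal terms were negligible, $\sup$ over the span would be $+\infty$. The actual mechanism (Lemma~\ref{AF}) is the reverse: with the paper's scaling $\psi_\vr^j(x)=\vr^{-N/2}\varphi_\de^j(\vr^{-2\tau}x)$, the quadratic part of $I_\vr$ carries the factor $\vr^{(2\tau-1)(N-2)}$ while the \emph{lower-critical} convolution term carries the larger factor $\vr^{(2\tau-1)(2N-\mu)}$, and it is this domination of the negative term that makes $\max_t I_\vr(t\psi_\vr^j)$ finite and equal (up to constants) to the announced power of $\vr$. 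The upper-critical term is simply discarded since it only helps. Moreover, the factor $\de$ in the final bound does not come from the scaling alone: it comes from Proposition~\ref{pro2}, which lets you choose the profiles $\varphi_\de^j$ with $|\nabla\varphi_\de^j|_2^2<\de$ while keeping the lower-critical convolution normalized to $1$. Your fixed bumps $\varphi_i$ cannot produce an arbitrarily small $\de$. Your proposed scaling $\varphi_i(\vr^{-\tau}x)$, without the amplitude factor and with exponent $\tau$ rather than $2\tau$, yields the wrong exponent $2(1-\tau)(2N-\mu)/(N-\mu)$.

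\textbf{The minimax values collapse to zero.} With $\Sigma_k$ equal to all compact symmetric sets of genus $\ge k$, one has $c_{k,\vr}=\inf_{A\in\Sigma_k}\sup_A I_\vr=0$, because a sphere of radius $r$ in any $k$-dimensional subspace has genus $k$ and $\sup I_\vr\to 0$ as $r\to 0$. The paper uses Benci's pseudoindex $i(A)=\min_{h\in\Gamma}\mathrm{gen}(h(A)\cap\partial B_{\rho_\vr})$ over odd homeomorphisms $h$, which forces $c_{\vr j}\ge \kappa_\vr>0$ via the Mountain-Pass sphere; your ``intersection with an appropriate sphere'' has to be built into the index, not just into the choice of $A_m$.
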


Secondly, we are going to study the nonlinear critical Choquard equation without lower perturbation. Consider
\begin{equation}\label{CE2}
\displaystyle-\vr^{2}\Delta u+ V(x)u
=\Big(\int_{\R^N}\frac{|u(y)|^{2_{\mu}^{\ast}}}{|x-y|^{\mu}}dy\Big)|u|^{2_{\mu}^{\ast}-2}u
\hspace{4.14mm}\mbox{in}\hspace{1.14mm} \R^N,
\end{equation}
where the potential $V$ satisfies the assumptions
\begin{itemize}
\item[$(V_4)$] $V\geq0$ on $\R^N$ and the set $M=\{x\in \R^N:V(x)=0\}$ is nonempty and
bounded.
 \item[$(V_5)$] $\exists p_{1}<\frac{N}{2},p_{2}>\frac{N}{2}$ and for $N=3$, $p_{2}<3$, such that
$$
V(x)\in L^{p},\ \ \ \forall p\in[p_{1},p_{2}].
$$
 \end{itemize}

Recall that if $Y$ is a closed set of a topological space $X$, we denote by $cat_X(Y)$, the Ljusternik-
Schnirelmann category of $Y$ in $X$, namely the least number of closed and contractible
sets in $X$ which cover $Y$. For $\tau>0$ small, let
$$
M_{\tau}:=\{x\in \R^N:dist(x, M)\leq\tau\},
$$
then the multiplicity of solutions for the nonlocal problem can be characterized by the Lusternik-Schnirelman category $cat_{M_{\tau}}M$.
\begin{thm}\label{3}
Suppose that conditions $(V_4)$ and $(V_5)$ hold, $0<\mu<\min\{4,N\}$ and $N\geq3$. Then for $\vr$ small problem \eqref{CE2} has $cat_{M_{\tau}}M$ solutions.
\end{thm}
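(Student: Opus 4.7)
The plan is to adapt the Benci--Cerami photography scheme to the nonlocal critical setting, combining a suitable rescaling with a global compactness decomposition for the Choquard energy and the explicit bubble profiles supplied by Proposition \ref{HLS}. After the rescaling $v(x)=\vr^{\alpha}u(\vr x)$ with $\alpha=\frac{(N-\mu)(N-2)}{2(N-\mu+2)}$, equation \eqref{CE2} becomes
$$-\Delta v + V(\vr x)\,v = \Big(\int_{\R^N}\frac{|v(y)|^{2_{\mu}^{\ast}}}{|x-y|^\mu}\,dy\Big)|v|^{2_{\mu}^{\ast}-2}v,$$
to be studied on the Hilbert space $H_\vr:=\{v\in D^{1,2}(\R^N):\int V(\vr x)v^2\,dx<\infty\}$ with associated functional $I_\vr$ and Nehari manifold $\cn_\vr$. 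The natural limit problem is the pure critical Choquard equation on $D^{1,2}(\R^N)$ with functional $I_\infty$; its Nehari level $c_\infty$ is attained exactly on the conformal family of extremals from Proposition \ref{HLS}. Testing with a truncated extremal centered (in the rescaled coordinates) at $y/\vr$ for some $y\in M$, and exploiting $V\equiv 0$ on $M$, gives $\limsup_{\vr\to 0}\inf_{\cn_\vr}I_\vr\le c_\infty$.

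The technical heart is a Struwe--type decomposition: every $(PS)_c$ sequence $\{v_n\}\subset H_\vr$ splits, up to a subsequence, as $v_n=v_0+\sum_{k=1}^{\ell}(v_\infty^k)_n+o(1)$ in $D^{1,2}(\R^N)$, where $v_0\in H_\vr$ solves the $\vr$--equation, each $(v_\infty^k)_n$ is a rescaled/translated bubble of the limit problem with $I_\infty(U_k)\ge c_\infty$, and $c=I_\vr(v_0)+\sum_{k=1}^{\ell}I_\infty(U_k)$. The proof needs a nonlocal Brezis--Lieb lemma for the Hardy--Littlewood--Sobolev term (where the hypothesis $0<\mu<\min\{4,N\}$ controls the cross--interactions), Lions' concentration--compactness, and the fact that under $(V_5)$ the quadratic form $v\mapsto\int V(\vr x)v^2\,dx$ is compact on bounded subsets of $D^{1,2}$, so that $V$ produces no bubble. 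Consequently $(PS)_c$ holds on $\cn_\vr$ at every level $c\in(0,c_\infty)$.

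Next, fix an extremal $\om$ of the limit problem and a cutoff $\eta\in C_c^\infty(\R^N)$ supported in $B_r(0)$ with $r$ so small that $\bigcup_{y\in M}B_r(y)\subset M_\tau$; for $y\in M$ define $\Phi_\vr(y)(x):=t_\vr(y)\,\eta(\vr|x-y/\vr|)\,\om(x-y/\vr)\in\cn_\vr$, where $t_\vr(y)>0$ is the Nehari projection, and check that $\sup_{y\in M}I_\vr(\Phi_\vr(y))\to c_\infty$. Introduce the barycenter
$$
\beta(v):=\frac{\displaystyle\int_{\R^N}\!\!\int_{\R^N}\chi(\vr x)\,\frac{|v(x)|^{2_{\mu}^{\ast}}|v(y)|^{2_{\mu}^{\ast}}}{|x-y|^\mu}\,dx\,dy}{\displaystyle\int_{\R^N}\!\!\int_{\R^N}\frac{|v(x)|^{2_{\mu}^{\ast}}|v(y)|^{2_{\mu}^{\ast}}}{|x-y|^\mu}\,dx\,dy},
$$
where $\chi$ is the identity on a large ball containing $M_\tau$ and is radially truncated outside. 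The decomposition above forces any $v_n\in\cn_\vr$ with $I_\vr(v_n)\to c_\infty$ to concentrate as a single bubble at a point of $M$ (otherwise a bubble outside $M$ would raise the energy strictly above $c_\infty$); hence, for $\vr$ small, $\beta(\cn_\vr^{c_\infty+\delta})\subset M_\tau$ and $\beta\circ\Phi_\vr$ is homotopic to the inclusion $M\hookrightarrow M_\tau$. Therefore $\cat_{\cn_\vr^{c_\infty+\delta}}\Phi_\vr(M)\ge \cat_{M_\tau}(M)$, and the standard Lusternik--Schnirelmann theory on the $C^1$ Nehari manifold, combined with the Palais--Smale information above, yields at least $\cat_{M_\tau}(M)$ critical points of $I_\vr$ below the level $c_\infty+\delta$; reversing the rescaling recovers the solutions of \eqref{CE2}.

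The main obstacle is the global compactness lemma itself: iterating the bubble extraction until the residue vanishes requires careful bookkeeping of nonlocal cross--interactions between bubbles at different scales and locations, which is more delicate than in the local Brezis--Nirenberg problem. The restriction $\mu<\min\{4,N\}$ and hypothesis $(V_5)$ are precisely what make this iteration work and guarantee that no bubble is generated by the potential term.
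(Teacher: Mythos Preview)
Your outline has a genuine gap at the step where you assert that ``any $v_n\in\cn_\vr$ with $I_\vr(v_n)\to c_\infty$ concentrate[s] as a single bubble at a point of $M$ (otherwise a bubble outside $M$ would raise the energy strictly above $c_\infty$)''. This is false in the present setting. Take any point $z_0\in\R^N$ with $V(z_0)>0$ and let $u_n$ be (the Nehari projection of) $\vr^{\frac{N-2}{N-\mu+2}}U_{\delta_n,z_0}$ with $\delta_n\to 0$. Under $(V_5)$ one has $\int_{\R^N}V(x)\,U_{\delta_n,z_0}^2\,dx\to 0$ (this is exactly the Benci--Cerami estimate used in the paper to show that $S_{\mathcal N_\vr}=c_\infty$), so the potential contribution disappears and $I_\vr(u_n)\to c_\infty$ while the barycenter tends to $z_0\notin M_\tau$. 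In other words, concentration kills the $V$--term regardless of the location of the bubble, and your barycenter map $\beta$ does \emph{not} send the sublevel set $\cn_\vr^{c_\infty+\delta}$ into $M_\tau$. The photography $\beta\circ\Phi_\vr\simeq \mathrm{id}$ therefore does not go through.

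A closely related issue is your Palais--Smale window. Since $\inf_{\cn_\vr}I_\vr=c_\infty$ and is \emph{not} attained, there is nothing on $\cn_\vr$ below $c_\infty$ (your stated range $(0,c_\infty)$ is vacuous) and the $(PS)_{c_\infty}$ condition \emph{fails}: every minimizing sequence is a $(PS)_{c_\infty}$ sequence that runs off along a bubble. The useful compactness interval, coming from the global decomposition plus the non-attainment of the Nehari infimum, is $c\in(c_\infty,2c_\infty)$, and one must arrange that all the Lusternik--Schnirelmann values lie strictly above $c_\infty$. The paper handles both difficulties simultaneously by introducing, in addition to the barycenter $\beta$, a \emph{concentration functional}
\[
\gamma(u)=S_{H,L}^{\frac{\mu-2N}{N+2-\mu}}\vr^{\frac{4-2N}{N+2-\mu}}\int_{\R^N}|\chi(x)-\beta(u)|\,|\nabla u|^2\,dx,
\]
and restricting to the set $\tilde{\mathcal N}_\vr\subset\cn_\vr$ on which $I_\vr(u)$ is close to $c_\infty$ \emph{and} $(\beta(u),\gamma(u))$ lies in a prescribed window $\Gamma$. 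Bounding $\gamma$ pins down the concentration scale of $u$; only with this extra constraint can one prove (Lemma \ref{E4}) that low-energy elements concentrate at points of $M$, and then run the Lusternik--Schnirelmann argument on $\tilde{\mathcal N}_\vr$ with $(PS)$ available in $(c_\infty,2c_\infty)$. Your sketch is missing this second functional and the restricted Nehari set; without them the barycenter step fails for the reason above.
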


For a given functional $I\in \mathcal{C}^1(E,\R)$,
$\{u_n\}\subset E$ is said to be Palais-Smale sequence at $c$ for $I$ ($(PS)_c$ sequence for short) if $I(u_n)\to c$ and
$I'(u_n)\to0$ as $n\to\infty$. We say that $I$
satisfies $(PS)_c$ condition if any
$(PS)_c$ sequence has a convergent
subsequence. In this paper
 we use $C$, $C_i$ to denote positive constants and $B_R$ the open ball centered at the origin with
radius $R>0$. $\mathcal{C}_0^{\infty}(\R^N)$ denotes  functions
infinitely differentiable with compact support in $\R^N$. The best Sobolev constant $S$ is defined by:
\[
S|u|^2_{2^*}\leq \int_{\R^N}|\nabla u|^2dx \ \ \ \hbox{for all $u\in
D^{1,2}(\R^N)$}.
\]

As \cite{GY}, let $S_{H,L}$ be the best constant defined by
\begin{equation}\label{S1}
S_{H,L}:=\displaystyle\inf\limits_{u\in D^{1,2}(\R^N)\backslash\{{0}\}}\ \ \frac{\displaystyle\int_{\R^N}|\nabla u|^{2}dx}{\Big(\displaystyle\int_{\R^N}\int_{\R^N}
\frac{|u(x)|^{2_{\mu}^{\ast}}|u(y)|^{2_{\mu}^{\ast}}}{|x-y|^{\mu}}dxdy\Big)^{\frac{N-2}{2N-\mu}}}.
\end{equation}

\begin{lem}\label{ExFu} (\cite{GY})
The constant $S_{H,L}$ defined in \eqref{S1} is achieved if and only if $$u=C\left(\frac{b}{b^{2}+|x-a|^{2}}\right)^{\frac{N-2}{2}} ,$$ where $C>0$ is a fixed constant, $a\in \R^N$ and $b\in(0,\infty)$ are parameters. What's more,
$$
S_{H,L}=\frac{S}{C(N,\mu)^{\frac{N-2}{2N-\mu}}},
$$
where $S$ is the best Sobolev constant.
\end{lem}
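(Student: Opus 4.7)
The plan is to sandwich the quotient defining $S_{H,L}$ between the sharp Hardy--Littlewood--Sobolev inequality of Proposition \ref{HLS} and the sharp Sobolev inequality, and then verify that the Talenti--Aubin bubbles saturate both inequalities simultaneously. First I would apply Proposition \ref{HLS} with $t=r=\frac{2N}{2N-\mu}$ to $f=h=|u|^{2_{\mu}^{\ast}}$. Note that $2_{\mu}^{\ast}\cdot t=\frac{2N-\mu}{N-2}\cdot\frac{2N}{2N-\mu}=2^{\ast}$, so
\begin{equation*}
\int_{\R^N}\!\int_{\R^N}\frac{|u(x)|^{2_{\mu}^{\ast}}|u(y)|^{2_{\mu}^{\ast}}}{|x-y|^{\mu}}\,dxdy\;\le\;C(N,\mu)\,|u|_{2^{\ast}}^{2\cdot 2_{\mu}^{\ast}}.
\end{equation*}
Raising both sides to the power $\frac{N-2}{2N-\mu}$, the exponent on the right becomes $2\cdot 2_{\mu}^{\ast}\cdot\frac{N-2}{2N-\mu}=2$, which matches the denominator perfectly.

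Substituting this bound into \eqref{S1} and then minimizing $\int|\nabla u|^{2}/|u|_{2^{\ast}}^{2}$ via the sharp Sobolev inequality yields
\begin{equation*}
S_{H,L}\;\ge\;\frac{S}{C(N,\mu)^{\frac{N-2}{2N-\mu}}}.
\end{equation*}
For the reverse inequality I would test the quotient on a Talenti--Aubin extremal $U(x)=C\bigl(b/(b^{2}+|x-a|^{2})\bigr)^{(N-2)/2}$. A direct computation shows $(N-2)\cdot 2_{\mu}^{\ast}/2=(2N-\mu)/2$, so $|U|^{2_{\mu}^{\ast}}$ is, up to a multiplicative constant, exactly of the form $(b^{2}+|x-a|^{2})^{-(2N-\mu)/2}$ — this is precisely the equality case of the Hardy--Littlewood--Sobolev inequality described in Proposition \ref{HLS}. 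Simultaneously, $U$ is an extremal for $S$. Hence both the HLS step and the Sobolev step are equalities for $u=U$, so the quotient in \eqref{S1} equals $S/C(N,\mu)^{(N-2)/(2N-\mu)}$, proving equality and showing that $S_{H,L}$ is attained.

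For the ``only if'' part of the characterization, suppose $u$ is a minimizer. Tracking back the chain of inequalities, both estimates must be equalities for $u$. Equality in the Hardy--Littlewood--Sobolev inequality (with $f=h=|u|^{2_{\mu}^{\ast}}$) forces, by Proposition \ref{HLS}, that $|u(x)|^{2_{\mu}^{\ast}}=A(\gamma^{2}+|x-a|^{2})^{-(2N-\mu)/2}$ for some $A\ge 0$, $\gamma\ne 0$, $a\in\R^{N}$. Extracting $2_{\mu}^{\ast}$-th roots and noting $\frac{2N-\mu}{2\cdot 2_{\mu}^{\ast}}=\frac{N-2}{2}$, this gives $|u(x)|=C'\bigl(\gamma/(\gamma^{2}+|x-a|^{2})\bigr)^{(N-2)/2}$. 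Equality in the Sobolev inequality for this shape is then automatic (or alternatively verified from the known classification of Sobolev extremals), yielding the stated form of $u$ up to sign.

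The main (and really only) subtle point is making sure the arithmetic of the exponents lines up — namely that the choice $t=r=\frac{2N}{2N-\mu}$ makes the right-hand side of HLS reduce exactly to a power of $|u|_{2^{\ast}}$ whose exponent, after raising to $\frac{N-2}{2N-\mu}$, is $2$, so that the Sobolev step can be applied cleanly. Once this is checked, the argument is a clean two-step sandwich and the Talenti bubble realizes equality in both steps at once.
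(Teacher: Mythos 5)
Your proof is correct and uses the same two-step sandwich (sharp HLS followed by sharp Sobolev, with the Talenti--Aubin bubble saturating both at once) that the cited reference \cite{GY} uses for this lemma; the exponent bookkeeping — $t=r=\tfrac{2N}{2N-\mu}$ turning $|f|_t|h|_r$ into $|u|_{2^*}^{2\cdot 2_\mu^*}$, and the power $\tfrac{N-2}{2N-\mu}$ then reducing this to $|u|_{2^*}^2$ — checks out. The only minor gloss is the passage from $|u|$ to $u$ in the ``only if'' direction (one should note that replacing $u$ by $|u|$ cannot increase the quotient, so minimizers may be taken nonnegative), which your ``up to sign'' remark implicitly covers.
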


An outline of this paper is as follows: In Section 2, we study the Choquard equation with double critical exponents and prove the existence of semiclassical states for equation \eqref{SCP2} by energy estimates and Mountain-Pass Theorem. We also proved that equation \eqref{SCP2} has at least $m$ pairs of solutions by the Krasnoselski genus theory. In Section 3, we prove firstly a global compactness lemma for the nonlocal Choquard equation and establish a convergence criteria for the $(PS)$ sequences. And then we prove the existence of multiple high energy semiclassical solutions of problem \eqref{CE2} by the Lusternik--Schnirelman theory.

\section{Critical problem with double critical exponents}
To prove the existence of semiclassical states by variational methods, we introduce the Hilbert spaces
\[
E:=\left\{u\in H^1(\R^N): \, \int_{\R^N}V(x)u^2dx<\infty\right\}
\]
with the inner products
\[
(u, v):=\int_{\R^N}\big(\nabla u\nabla v+V(x)uv\big)dx
\]
and the associated norms
\[\|u\|^2_V=(u,u).
\]
 Obviously, it follows from $(V_1)$
that $E$ embeds continuously in $H^1(\R^N)$ (see \cite{DL,Si}).  Note that the norm
$\|\cdot\|_V$ is equivalent to $\|\cdot\|_{\vr}$ deduced by
the inner product
\[
(u, v)_{\vr}:=\int_{\R^N}\big(\vr^2\nabla u\nabla v +  V(x)uv\big)dx
\]
for each $\vr>0$.

Consider the Choquard equation with double critical exponents
\begin{equation}\label{SCC1}
-\vr^2\Delta u + V(x)u  =\left(\int_{\R^N}\frac{|u(y)|^{2_{\mu}^{\ast}}}{|x-y|^{\mu}}dy\right)|u|^{2_{\mu}^{\ast}-2}u +\left(\int_{\R^N}\frac{|u(y)|^{2_{\mu\ast}}}{|x-y|^{\mu}}dy\right)|u|^{2_{\mu \ast}-2}u \,\,\,\, \mbox{in $\R^{N}$},
\end{equation}
we can define the functional on $E$ by
$$
I_{\vr}(u)=\frac12\|u\|^2_{\vr}-\frac{1}{2\cdot2_{\mu}^{\ast}}\int_{\R^N}\int_{\R^N}
\frac{|u(x)|^{2_{\mu}^{\ast}}|u(y)|^{2_{\mu}^{\ast}}}{|x-y|^{\mu}}dxdy-\frac{1}{2\cdot2_{\mu\ast}}\int_{\R^N}\int_{\R^N}
\frac{|u(x)|^{2_{\mu \ast}}|u(y)|^{2_{\mu \ast}}}{|x-y|^{\mu}}dxdy.
$$
The Hardy--Littlewood--Sobolev inequality implies that $I_{\vr}$ is well defined on $E$ and belongs to $\mathcal{C}^{1}$. And so $u$ is a weak solution of \eqref{SCC1} if and only if $u$ is a critical point of the functional $I_{\vr}$.

\subsection{Existence of ground states}

We will use the following Mountain--Pass Theorem to prove the existence of solutions.
\begin{lem}\label{mountain:1} \cite{AR} Let $E$ be a real Banach space and $I:E\to \R$ a functional of class $\mathcal{C}^1$.  Suppose  that $I(0)=0$ and:
\begin{itemize}
  \item[$(I_1).$] There exist $\kappa, \rho>0$ such that $I|_{\partial B_\rho}\geq\kappa>0$ for all $u\in \partial B_\rho=\{u\in E:\|u\|=\rho\}$;
  \item[$(I_2).$]  there is $e$ with $\|e\|>\rho$ such that $I(e)\leq 0$.
\end{itemize}
Then I possesses a $(PS)_c$ sequence with $c\geq\kappa>0$ given by
\[
c:=\inf_{\gamma \in \Gamma} \max_{0\leq t\leq 1}
I(\gamma (t)),
\]
where
\[
\Gamma = \{\gamma \in \mathcal{C}([0,1],E):
\gamma(0)=0,\quad
\gamma (1)=e\}. \quad
\]
\end{lem}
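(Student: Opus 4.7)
The plan is to follow the classical Ambrosetti--Rabinowitz minimax scheme: first verify that the candidate critical value $c$ is both finite and strictly positive, and then produce the Palais--Smale sequence at level $c$ either by Ekeland's variational principle applied to the path space $\Gamma$, or more cleanly by an indirect deformation argument. I would opt for the deformation route because it localizes the information at the level $c$ and does not require a metric structure on $\Gamma$.

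First I would check that $c$ is well defined and positive. The class $\Gamma$ is nonempty since the straight segment $t \mapsto te$ belongs to it, so $c < +\infty$. For the lower bound, fix any $\gamma \in \Gamma$; because $\|\gamma(0)\| = 0 < \rho < \|e\| = \|\gamma(1)\|$ and $t \mapsto \|\gamma(t)\|$ is continuous on $[0,1]$, there exists $t^{*} \in (0,1)$ with $\|\gamma(t^{*})\| = \rho$. Hypothesis $(I_{1})$ then yields
\[
\max_{t \in [0,1]} I(\gamma(t)) \geq I(\gamma(t^{*})) \geq \kappa,
\]
so $c \geq \kappa > 0$ after taking infimum over $\Gamma$.

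Next I would produce the $(PS)_{c}$ sequence by contradiction. Assume no such sequence exists. Then a standard quantitative estimate gives constants $\bar\vr, \de > 0$ such that $\|I'(u)\|_{E^{*}} \geq \de$ for every $u \in E$ with $|I(u) - c| \leq 2\bar\vr$. Shrinking $\bar\vr$, we may also arrange $c - 2\bar\vr > 0 \geq \max\{I(0), I(e)\}$, using $(I_{2})$ and $c \geq \kappa > 0$. The quantitative deformation lemma then supplies a continuous map $\eta : [0,1] \times E \to E$ satisfying $\eta(s,u) = u$ whenever $|I(u) - c| \geq 2\bar\vr$, $I(\eta(s,u)) \leq I(u)$ for all $s$, and $\eta(1, I^{c+\vr}) \subset I^{c-\vr}$ for some $\vr \in (0, \bar\vr)$. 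Pick $\gamma \in \Gamma$ with $\max_{[0,1]} I \circ \gamma \leq c + \vr$ and define $\tilde\gamma(t) := \eta(1, \gamma(t))$. Because $0$ and $e$ lie outside the critical slab, $\eta$ fixes them, so $\tilde\gamma \in \Gamma$, yet $\max_{[0,1]} I \circ \tilde\gamma \leq c - \vr$, contradicting the definition of $c$.

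The main obstacle is the construction underlying the deformation lemma, namely building a locally Lipschitz pseudo-gradient vector field $V$ on the regular set of $I$ with $\|V(u)\| \leq 2\|I'(u)\|$ and $\langle I'(u), V(u) \rangle \geq \|I'(u)\|^{2}$, then integrating a suitably truncated and cut-off $-V$ to obtain $\eta$. This is a technical but well-known construction for $\mathcal{C}^{1}$ functionals on Banach spaces; once it is in hand the minimax argument above closes cleanly, and no further structure on $E$ beyond the Banach space hypothesis is needed.
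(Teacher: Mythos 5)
Your argument is correct and is the standard proof of the Mountain Pass Lemma in its weak form (existence of a $(PS)_c$ sequence, no compactness assumed), built on the quantitative deformation lemma and a pseudo-gradient vector field. The paper states this lemma as a citation to \cite{AR} without proof, so there is no in-paper argument to compare against; your derivation is the one found in, e.g., Willem's \emph{Minimax Theorems}. All the details are in order: $\Gamma$ is nonempty via the segment $t\mapsto te$; the intermediate value theorem applied to $t\mapsto\|\gamma(t)\|$ forces every path to cross $\partial B_\rho$, giving $c\ge\kappa>0$; shrinking $\bar\vr$ so that $c-2\bar\vr>0\ge\max\{I(0),I(e)\}$ guarantees $|I(0)-c|>2\bar\vr$ and $|I(e)-c|>2\bar\vr$, so the deformation fixes the endpoints and $\tilde\gamma\in\Gamma$, producing the contradiction.
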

Generally, we need to verify that the functional $I_\vr$ satisfies the Mountain--Pass Geometry. In fact,
\begin{lem}\label{MPgeometry}
Let conditions $(V_1)-(V_3)$ be satisfied, then  for each $\vr>0$, $I_{\vr}(0)=0$ and there
exists $\rho_\vr>0$ such that
$\kappa_\vr:=\inf I_\vr(\partial B_{\rho_\vr})>0$ where
$\partial B_{\rho_\vr}=\{u\in E: \ \|u\|_\vr=\rho_\vr\}$.
\end{lem}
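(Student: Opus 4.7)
The plan is a standard Hardy--Littlewood--Sobolev plus Sobolev embedding computation: both nonlocal critical terms are bounded by powers of $\|u\|_\vr$ strictly greater than $2$, so the quadratic term dominates on a small sphere. The identity $I_\vr(0)=0$ is immediate, so the only real content is the lower bound on $\partial B_{\rho_\vr}$ for a suitable $\rho_\vr$.

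First I would apply Proposition~\ref{HLS} with $t=r=\frac{2N}{2N-\mu}$ to each double integral. The admissibility conditions $|u|^{2_\mu^\ast}\in L^{\frac{2N}{2N-\mu}}$ and $|u|^{2_{\mu\ast}}\in L^{\frac{2N}{2N-\mu}}$ reduce, respectively, to $u\in L^{2^\ast}$ and $u\in L^2$. This yields
$$
\int_{\R^N}\int_{\R^N}\frac{|u(x)|^{2_\mu^\ast}|u(y)|^{2_\mu^\ast}}{|x-y|^\mu}\,dxdy\leq C(N,\mu)\,|u|_{2^\ast}^{2\cdot 2_\mu^\ast},
$$
$$
\int_{\R^N}\int_{\R^N}\frac{|u(x)|^{2_{\mu\ast}}|u(y)|^{2_{\mu\ast}}}{|x-y|^\mu}\,dxdy\leq C(N,\mu)\,|u|_{2}^{2\cdot 2_{\mu\ast}}.
$$

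Second, I would use the continuous embedding $E\hookrightarrow H^1(\R^N)$ guaranteed by $(V_1)$, combined with the equivalence between $\|\cdot\|_V$ and $\|\cdot\|_\vr$ for each fixed $\vr>0$ (the latter costing at most a factor $\vr^{-1}$ on the gradient part), to obtain Sobolev-type bounds $|u|_{2^\ast}\leq C_1(\vr)\|u\|_\vr$ and $|u|_2\leq C_2(\vr)\|u\|_\vr$. Inserting these into the definition of $I_\vr$ gives
$$
I_\vr(u)\geq \tfrac12\|u\|_\vr^2-A_1(\vr)\|u\|_\vr^{2\cdot 2_\mu^\ast}-A_2(\vr)\|u\|_\vr^{2\cdot 2_{\mu\ast}}.
$$

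Third, I would observe that the hypothesis $0<\mu<N$ forces $2\cdot 2_\mu^\ast=\frac{2(2N-\mu)}{N-2}>2$ and $2\cdot 2_{\mu\ast}=\frac{2(2N-\mu)}{N}>2$, so both correction terms are superquadratic. Choosing $\rho_\vr>0$ small enough (depending on $\vr$) so that $A_1(\vr)\rho_\vr^{2\cdot 2_\mu^\ast-2}+A_2(\vr)\rho_\vr^{2\cdot 2_{\mu\ast}-2}\leq 1/4$ yields $I_\vr(u)\geq \tfrac14\rho_\vr^2=:\kappa_\vr>0$ whenever $\|u\|_\vr=\rho_\vr$.

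There is no real obstacle; the only subtlety worth flagging is that the constants $A_i(\vr)$, $\rho_\vr$ and $\kappa_\vr$ necessarily depend on $\vr$ (they blow up as $\vr\to 0$ through the gradient rescaling and through the embedding constant of $E$ into $H^1$), but this dependence is compatible with the statement, which asserts mountain-pass geometry for each \emph{fixed} $\vr>0$. The small-energy estimates uniform in $\vr$ that the main theorems need will be delivered by the later energy comparison with a test function, not by this lemma.
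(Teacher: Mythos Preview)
Your proposal is correct and follows essentially the same route as the paper: apply the Hardy--Littlewood--Sobolev inequality to each double integral, use the embedding $E\hookrightarrow H^1(\R^N)$ from $(V_1)$ together with the equivalence of $\|\cdot\|_V$ and $\|\cdot\|_\vr$, and conclude from the superquadratic exponents $2\cdot 2_\mu^\ast>2$ and $2\cdot 2_{\mu\ast}>2$. The paper's proof is in fact terser than yours---it simply records the inequality $I_\vr(u)\ge\tfrac12\|u\|_\vr^2-C_0\|u\|_\vr^{2\cdot 2_{\mu\ast}}-C_1\|u\|_\vr^{2\cdot 2_\mu^\ast}$ and says the conclusion follows for $\|u\|_\vr$ small---so your added detail on the HLS exponents, the relevant Lebesgue norms, and the $\vr$-dependence of the constants is welcome but not additional content.
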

\begin{proof}
First, for each fixed $\vr$,  $I_{\vr}(0)=0$.
Applying the Hardy--Littlewood--Sobolev inequality, for each $u\in E$,
 we know
\begin{equation}\label{mpg1}
\aligned
I_{\vr}(u)\geq\frac12\|u\|^2_{\vr}-C_0\|u\|^{2\cdot2_{\mu\ast}}_{\vr}-C_1\|u\|_{\vr}^{2\cdot2_{\mu}^{\ast}}
,
\endaligned
\end{equation}
the conclusion follows if $\|u\|_{\vr}$ is small enough.

Moreover, for any $u_{1}\in E\backslash\ \{0\}$, we have
$$
I_{\vr}(tu_{1})\leq\frac{t^{2}}{2}\|u_{1}\|^2_{\vr}-\frac{t^{2\cdot2_{\mu}^{\ast}}}{2\cdot2_{\mu}^{\ast}}
\int_{\mathbb{R}^N}\int_{\mathbb{R}^N}
\frac{|u_{1}(x)|^{2_{\mu}^{\ast}}|u_{1}(y)|^{2_{\mu}^{\ast}}}
{|x-y|^{\mu}}dxdy<0
$$
for $t>0$ large enough.
\end{proof}
The following Proposition is taken from \cite{YD},
\begin{Prop}\label{pro2}
\begin{equation}\label{Infimum}
\inf\,\left\{\int_{\R^N}|\nabla\varphi|^2dx: \
\varphi\in\mathbb{C}^\infty_0(\R^N),\,  \int_{\R^N}\int_{\R^N}\frac{|\varphi(x)|^{2_{\mu\ast}}|\varphi(y)|^{2_{\mu\ast}}}{|x-y|^{\mu}}dxdy=1\right\} = 0.\end{equation}
  \end{Prop}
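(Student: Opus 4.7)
The plan is to exhibit a family of admissible test functions whose Dirichlet energies tend to zero, exploiting the fact that at the lower critical exponent $2_{\mu\ast}=(2N-\mu)/N$ the Riesz convolution nonlinearity is invariant under the $L^{2}$-preserving dilation $u\mapsto\lambda^{N/2}u(\lambda\,\cdot\,)$. First I would pick any nonzero $\varphi\in\mathcal{C}_0^\infty(\R^N)$; by Proposition~\ref{HLS} with $t=r=2N/(2N-\mu)$, the double integral $\int_{\R^N}\int_{\R^N}|\varphi(x)|^{2_{\mu\ast}}|\varphi(y)|^{2_{\mu\ast}}/|x-y|^{\mu}\,dxdy$ is finite and positive, so after multiplying $\varphi$ by a suitable positive constant I may assume it equals $1$.

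Next I would introduce the family
\[
\varphi_\lambda(x):=\lambda^{N/2}\varphi(\lambda x)\in\mathcal{C}_0^\infty(\R^N),\qquad\lambda>0,
\]
and record how the two functionals scale. A direct change of variables gives $\int_{\R^N}|\nabla\varphi_\lambda|^{2}\,dx=\lambda^{2}\int_{\R^N}|\nabla\varphi|^{2}\,dx$, while the substitution $u=\lambda x,\ v=\lambda y$ multiplies the nonlocal integral by the factor $\lambda^{N\cdot 2_{\mu\ast}-2N+\mu}$. Substituting $2_{\mu\ast}=(2N-\mu)/N$ makes this exponent vanish, so the nonlocal integral stays equal to $1$ for every $\lambda>0$, confirming that each $\varphi_\lambda$ is admissible. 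Letting $\lambda\to 0^{+}$ then yields $\int_{\R^N}|\nabla\varphi_\lambda|^{2}\,dx=\lambda^{2}\int_{\R^N}|\nabla\varphi|^{2}\,dx\to 0$, which proves the infimum equals $0$.

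There is no substantive obstacle here: the whole argument rests on the (standard) observation that $2_{\mu\ast}$ is the unique exponent for which the Choquard-type double integral and the $L^{2}$ norm transform identically under dilation, which is precisely why it is called the \emph{lower} critical exponent. No concentration--compactness analysis is needed; the scaling alone delivers the conclusion.
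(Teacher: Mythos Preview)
Your argument is correct and is essentially identical to the paper's own proof: both fix a normalized $\varphi\in\mathcal{C}_0^\infty(\R^N)$, apply the $L^2$-preserving dilation $\varphi_t(x)=t^{N/2}\varphi(tx)$, observe that the nonlocal integral is invariant while the Dirichlet energy scales like $t^2$, and send $t\to 0$. The only differences are cosmetic (your parameter is called $\lambda$, and you add the explicit justification via Proposition~\ref{HLS} that the constraint set is nonempty).
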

\begin{proof}
In fact, for all fixed $\varphi$ satisfying
$$
 \int_{\R^N}\int_{\R^N}\frac{|\varphi(x)|^{2_{\mu\ast}}|\varphi(y)|^{2_{\mu\ast}}}{|x-y|^{\mu}}dxdy=1,
 $$
let us define,
 $$
\varphi_t=t^{\frac{N}{2}}\varphi(tx),\ \ t>0.
 $$
Then we have
 $$
 \int_{\R^N}\int_{\R^N}\frac{|\varphi_t(x)|^{2_{\mu\ast}}|\varphi_t(y)|^{2_{\mu\ast}}}{|x-y|^{\mu}}dxdy=\int_{\R^N}\int_{\R^N}\frac{|\varphi(x)|^{2_{\mu\ast}}|\varphi(y)|^{2_{\mu\ast}}}{|x-y|^{\mu}}dxdy=1
 $$
 and
 $$
 \int_{\R^N}|\nabla\varphi_t|^2dx=t^{2} \int_{\R^N}|\nabla\varphi|^2dx.
 $$
Thus we know
 $$
 \int_{\R^N}|\nabla\varphi_t|^2dx\to0
 $$
 as $t\to0$. Thus the proposition is proved.
\end{proof}
 \begin{lem}\label{AF}
Let conditions $(V_1)-(V_3)$ be satisfied. Then for any
$\de>0$ there exists $\mathcal{E}_\de>0$ such that, for each
$\vr\leq\mathcal{E}_\de$, there is $\psi_{\vr}\in E$ such that $I_{\vr}(\psi_{\vr})< 0$ and $$
 \max_{t\in [0,1]}I_{\vr}(t\psi_{\vr})\leq\frac{N-\mu}{2N-\mu}2^{\frac{N}{N-\mu}}\vr^{\frac{2(1-2\tau)(2N-\mu)}{N-\mu}}.
$$
\end{lem}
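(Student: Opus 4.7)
My plan is to construct $\psi_\vr$ as a rescaled bump concentrated at the origin, where $V$ vanishes by $(V_2)$. First I invoke Proposition \ref{pro2} to pick $\varphi\in\mathcal{C}_0^\infty(\R^N)$ with
\[
\int_{\R^N}\int_{\R^N}\frac{|\varphi(x)|^{2_{\mu\ast}}|\varphi(y)|^{2_{\mu\ast}}}{|x-y|^\mu}\,dxdy = 1, \qquad \int_{\R^N}|\nabla\varphi|^2\,dx\leq\eta,
\]
for any prescribed $\eta>0$; since $\varphi$ has compact support, say in $B_R(0)$, the rescaled test function $\tilde\psi_\vr(x):=\varphi(x/\vr^{2\tau})$ concentrates at the origin as $\vr\to 0$. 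I will take $\psi_\vr:=T_\vr\tilde\psi_\vr$ with the dilation factor $T_\vr$ selected at the very end.

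Next I estimate $\|\tilde\psi_\vr\|_\vr^2$. A change of variables yields $\int_{\R^N}\vr^2|\nabla\tilde\psi_\vr|^2\,dx = \vr^{2(1-2\tau)+2\tau N}\int|\nabla\varphi|^2\,dy$. Meanwhile $(V_3)$ gives $V(\vr^{2\tau}y)=o(\vr^{2(1-2\tau)}|y|^{(1-2\tau)/\tau})$ uniformly for $y\in B_R(0)$, so $\int_{\R^N}V(x)\tilde\psi_\vr^2\,dx = o(\vr^{2(1-2\tau)+2\tau N})$. Hence $\|\tilde\psi_\vr\|_\vr^2\leq C_\varphi\vr^{2(1-2\tau)+2\tau N}$, with $C_\varphi$ arbitrarily small by first choosing $\eta$ small and then $\vr$ small. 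Straightforward scaling also gives
\[
\int_{\R^N}\int_{\R^N}\frac{|\tilde\psi_\vr(x)|^{2_{\mu\ast}}|\tilde\psi_\vr(y)|^{2_{\mu\ast}}}{|x-y|^\mu}\,dxdy = \vr^{2\tau(2N-\mu)}.
\]

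Dropping the nonnegative upper-critical term gives $I_\vr(s\tilde\psi_\vr)\leq g(s):=\tfrac{s^2}{2}\|\tilde\psi_\vr\|_\vr^2-\tfrac{s^{2\cdot 2_{\mu\ast}}}{2\cdot 2_{\mu\ast}}\vr^{2\tau(2N-\mu)}$, and a standard one-variable optimization yields
\[
\max_{s\geq 0}g(s) = \tfrac{N-\mu}{2(2N-\mu)}\,\|\tilde\psi_\vr\|_\vr^{2(2N-\mu)/(N-\mu)}\,\vr^{-2\tau N(2N-\mu)/(N-\mu)}.
\]
Plugging in the bound on $\|\tilde\psi_\vr\|_\vr^2$, the two powers of $\vr^{2\tau N}$ cancel exactly, leaving $\tfrac{N-\mu}{2(2N-\mu)}C_\varphi^{(2N-\mu)/(N-\mu)}\vr^{2(1-2\tau)(2N-\mu)/(N-\mu)}$, which matches the claimed $\vr$-exponent. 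Choosing $\eta$ small enough to force $C_\varphi\leq 2$ (or any smaller value, giving the $\de$-weighted version consistent with Theorem \ref{1}) produces the constant $\tfrac{N-\mu}{2N-\mu}2^{N/(N-\mu)}$.

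Finally, to secure $I_\vr(\psi_\vr)<0$ and to restrict attention to $t\in[0,1]$, the upper-critical term drives $I_\vr(s\tilde\psi_\vr)\to-\infty$ as $s\to\infty$, so I pick $T_\vr$ past the global maximizer of $s\mapsto I_\vr(s\tilde\psi_\vr)$ and with $I_\vr(T_\vr\tilde\psi_\vr)<0$, then set $\psi_\vr:=T_\vr\tilde\psi_\vr$. Substituting $s=tT_\vr$ gives $\max_{t\in[0,1]}I_\vr(t\psi_\vr)=\max_{s\in[0,T_\vr]}I_\vr(s\tilde\psi_\vr)=\max_{s\geq 0}I_\vr(s\tilde\psi_\vr)\leq\max_{s\geq 0}g(s)$. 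The principal obstacle is identifying the correct scale $\lambda_\vr=\vr^{2\tau}$: it is precisely the scale at which the gradient and potential parts of $\|\tilde\psi_\vr\|_\vr^2$ share the same $\vr$-exponent, and the hypothesis $\tau\leq 1/(N-\mu+2)$ in $(V_3)$ is exactly what forces the potential term to be controlled by the gradient after dilation.
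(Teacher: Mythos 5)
Your proposal is correct and follows essentially the same route as the paper: pick $\varphi$ via Proposition~\ref{pro2}, rescale at the critical scale $\vr^{2\tau}$, use $(V_3)$ to make the potential contribution lower-order, discard the nonnegative upper-critical term, and optimize a one-variable polynomial; the only cosmetic difference is that the paper builds in a $\vr^{-N/2}$ prefactor (which makes $I_\vr(\psi_\vr)<0$ automatic by comparing $\vr$-exponents), whereas you rescale post hoc by $T_\vr$, which is equally fine. Two small caveats: (i) the paper's proof actually yields the $\de$-weighted bound $\de\,\frac{N-\mu}{2N-\mu}2^{N/(N-\mu)}\vr^{2(1-2\tau)(2N-\mu)/(N-\mu)}$, which is what the subsequent compactness comparison (Lemma~\ref{PSlemma:3}) and Theorem~\ref{1} require, so you should take $\eta$ small enough that $C_\varphi\le 2\de^{(N-\mu)/(2N-\mu)}$ rather than merely $C_\varphi\le 2$; (ii) your closing remark misattributes the role of $\tau\le\frac{1}{N-\mu+2}$ — the potential term is controlled by the gradient after dilation purely because of the $o(\cdot)$ decay in $(V_3)$ at scale $\vr^{2\tau}$, valid for any $\tau\in(0,1/2)$, whereas the specific upper bound on $\tau$ is what guarantees later that $c_\vr=o(\vr^2)$ (Lemma~\ref{T0}) and that $c_\vr$ falls below the Palais–Smale threshold $\frac{N-\mu+2}{2(2N-\mu)}S_{H,L}^{(2N-\mu)/(N-\mu+2)}\vr^{2(2N-\mu)/(N-\mu+2)}$.
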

 \begin{proof}
From Proposition \ref{pro2}, for any $\de>0$ one can choose $\va_\de\in\cc^\infty_0(\R^N)$ with $Supp\va_\de\subset B_{r_\de}(0)$ such that
$$\int_{\R^N}\int_{\R^N}\frac{|\va_\de(x)|^{2_{\mu \ast}}|\va_\de(y)|^{2_{\mu \ast}}}
{|x-y|^{\mu}}dxdy=1$$ and
$$|\nabla\va_\de|^2_2<\de.$$
For $\tau$ in condition $(V_3)$, define
\begin{equation}\label{mpg2}
\psi_\vr(x):=\vr^{-\frac{N}{2}}\va_\de(\vr^{-2\tau}x),
\end{equation}
then
$$
Supp \psi_\vr \subset B_{\vr^{2\tau}r_\de}(0).
$$
It is easy to see that
$$
\int_{\R^N}|\nabla \psi_\vr|^2dx=\vr^{(2\tau-1)N-4\tau}\int_{\R^N}|\nabla
\va_\de|^2dx,
$$
$$
\int_{\R^N}V(x)|\psi_\vr|^2dx=\vr^{(2\tau-1)N}\int_{\R^N}
V(\vr^{2\tau}x)|\va_\de(x)|^2dx
$$
and
$$
\int_{\R^N}\int_{\R^N}\frac{|\psi_\vr(x)|^{2_{\mu \ast}}|\psi_\vr(y)|^{2_{\mu \ast}}}
{|x-y|^{\mu}}dxdy=\vr^{(2\tau-1)(2N-\mu)}.
$$
From $Supp\va_\de\subset B_{r_\de}(0)$ and the fact that $$
 \displaystyle\lim_{|x|\to0}\frac{V(x)}{|x|^{\frac{1-2\tau}{\tau}}}=0,$$
we know that there is $\mathcal{E}_{\de,1}>0$ such that for any $0<\vr<\mathcal{E}_{\de,1}$
$$
V(\vr^{2\tau}x)\leq \frac{\vr^{2(1-2\tau)}\de}{|\va_\de|^2_2}
$$
uniformly for $x\in B_{r_\de}(0)$.
Then from the above equalities, we know
\begin{equation}\label{ES1}
\aligned
 \displaystyle I_{\vr}(\psi_{\vr})
 &\leq\displaystyle \frac12\int_{\R^N}\vr^2\big|\nabla\psi_{\vr}\big|^2dx+\frac12\int_{\R^N}V(x)|\psi_{\vr}|^2dx-\frac{1 }{2\cdot2_{\mu \ast}}\int_{\R^N}\int_{\R^N}\frac{|\psi_\vr(x)|^{2_{\mu \ast}}|\psi_\vr(y)|^{2_{\mu \ast}}}
{|x-y|^{\mu}}dxdy\vspace{5mm}\\
&\displaystyle={\frac{\vr^{(2\tau-1)(N-2)}}{2}}\int_{\R^N}|\nabla
\va_\de|^2dx+\frac{\vr^{(2\tau-1)N}}{2}\int_{\R^N}
V(\vr^{2\tau}x)\va^2_\de dx
-\frac{\vr^{(2\tau-1)(2N-\mu)}}{2\cdot2_{\mu \ast}}\\
&<\de\vr^{(2\tau-1)(N-2)}-\frac{\vr^{(2\tau-1)(2N-\mu)}}{2\cdot2_{\mu \ast}}.
\endaligned
\end{equation}
Since $0<\tau\leq\frac{1}{N-\mu+2}$, then $$
(2\tau-1)(2N-\mu)<(2\tau-1)(N-2),
  $$ thus we know there exists $\mathcal{E}_{\de}$ with $0<\mathcal{E}_{\de}<\mathcal{E}_{\de,1}$ such that, for any $0<\vr<\mathcal{E}_{\de}$
there is a $\psi_{\vr}$ such that
$$
 I_{\vr}(\psi_{\vr})<0.
 $$
 Observe that  $I_{\vr}(t\psi_{\vr})>0$ for $t$ small enough and $I_{\vr}(t\psi_{\vr})<0$ for $t\geq1$, we know $$
 \max_{t\in \R}I_{\vr}(t\psi_{\vr})=\max_{t\in [0,1]}I_{\vr}(t\psi_{\vr}).
$$
Moreover, for such fixed $0<\vr<\mathcal{E}_{\de}$, from \eqref{ES1} we know
 $$
\aligned
\displaystyle\max_{t\in [0,1]}I_{\vr}(t\psi_{\vr})&\leq \de\max_{t\in [0,1]}\{\vr^{(2\tau-1)(N-2)}t^2-\frac{\vr^{(2\tau-1)(2N-\mu)}}{2\cdot2_{\mu \ast}}t^{2\cdot2_{\mu \ast}}\}\\
&:=\de\max_{t\in [0,1]}\Psi(t).
\endaligned
$$
By direct computation, we know there exists unique $t_0\in [0,1]$ such that
$$
\Psi(t_0)=\max_{t\in [0,1]}\Psi(t).
$$
In fact, $t_0$ satisfies
$$
t^2_0=2^{\frac{N}{N-\mu}}\vr^{\frac{(N+2-\mu)(1-2\tau)N}{N-\mu}}.
$$
Consequently, we know
$$
\aligned
\max_{t\in [0,1]}I_{\vr}(t\psi_{\vr})\leq \de\frac{N-\mu}{2N-\mu}2^{\frac{N}{N-\mu}}\vr^{\frac{2(1-2\tau)(2N-\mu)}{N-\mu}}
\endaligned
$$
and the conclusion is proved.
\end{proof}

For $\de>0$ and $\mathcal{E}_\de>0$ obtained in Lemma \ref{AF}, $I_\vr$ possesses a $(PS)_{c_\vr}$ sequence with $c_\vr\geq\kappa_\vr>0$ given by
$$
c_\vr:=\inf_{\ga\in\Gamma_\vr}\max_{t\in [0,1]}I_\vr(\ga(t))
$$
where
$$
\Gamma_\vr:=\left\{\ga\in\cc([0,1],E): \ \ga(0)=0 \ \hbox{and} \
\ga(1)= \psi_{\vr}\right\}.
$$
By Lemma \ref{AF}, we know
$$
0<\kappa_\vr\leq c_\vr\leq\de\frac{N-\mu}{2N-\mu}2^{\frac{N}{N-\mu}}\vr^{\frac{2(1-2\tau)(2N-\mu)}{N-\mu}}.
$$

\begin{lem}\label{PSlemma:1}
Suppose that conditions $(V_1)-(V_3)$ hold. For fixed $0<\vr<\mathcal{E}_{\de}$ small,
  let
$\{u_n\}$ be a $(PS)_{c_\vr}$ sequence for $I_{\vr}$, then $\{u_n\}$ is bounded.
\end{lem}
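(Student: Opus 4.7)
The plan is to exploit the standard pairing trick $I_\vr(u_n) - \lambda\langle I'_\vr(u_n),u_n\rangle$, but with the multiplier $\lambda$ tuned to the \emph{smaller} of the two critical exponents rather than to the larger one. Specifically, I would consider
\[
I_{\vr}(u_n) - \frac{1}{2\cdot 2_{\mu\ast}}\langle I'_{\vr}(u_n),u_n\rangle,
\]
and expand. The lower critical Choquard term cancels identically, the upper critical Choquard term comes with coefficient $\frac{1}{2\cdot 2_{\mu\ast}}-\frac{1}{2\cdot 2_\mu^{\ast}}>0$ (since $2_\mu^{\ast} > 2_{\mu\ast}$) and can therefore be discarded, and the quadratic part carries the coefficient
\[
\frac{1}{2}-\frac{1}{2\cdot 2_{\mu\ast}} \;=\; \frac{2_{\mu\ast}-1}{2\cdot 2_{\mu\ast}},
\]
which is strictly positive since $2_{\mu\ast}=(2N-\mu)/N>1$ (because $\mu<N$).

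Using $I_{\vr}(u_n)=c_\vr+o(1)$ and the standard bound $|\langle I'_{\vr}(u_n),u_n\rangle|\le \|I'_{\vr}(u_n)\|_{E^{\ast}}\|u_n\|_\vr = o(1)\|u_n\|_\vr$, the identity above yields
\[
\frac{2_{\mu\ast}-1}{2\cdot 2_{\mu\ast}}\,\|u_n\|_\vr^{\,2} \;\le\; c_\vr + 1 + o(1)\|u_n\|_\vr
\]
for all $n$ large. A quadratic inequality of the form $a\|u_n\|_\vr^2 \le C + b\|u_n\|_\vr$ forces $\sup_n\|u_n\|_\vr<\infty$, and since the norms $\|\cdot\|_\vr$ and $\|\cdot\|_V$ are equivalent for each fixed $\vr>0$, this gives the desired boundedness of $\{u_n\}$ in $E$.

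I do not anticipate any genuine obstacle here: the estimate is purely algebraic and does not rely on the Hardy--Littlewood--Sobolev inequality or on any embedding of the Choquard nonlinearities at this stage. The one point that deserves care is the choice of multiplier: taking $\frac{1}{2\cdot 2_\mu^{\ast}}$ (matched to the upper critical exponent) would leave the lower critical Choquard term with a \emph{negative} sign on the favorable side, and one would then have to control it by $|u_n|_2$ via Hardy--Littlewood--Sobolev and the finite-measure hypothesis $(V_1)$. Matching the multiplier to the smaller exponent $2_{\mu\ast}$ is much cleaner because it sends both nonlocal terms to the good side automatically and reduces the argument to the two elementary positivity facts $2_{\mu\ast}>1$ and $2_\mu^{\ast}>2_{\mu\ast}$.
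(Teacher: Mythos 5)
Your proposal is correct and is essentially identical to the paper's proof: both pair $I_\vr(u_n)$ against $\frac{1}{2\cdot 2_{\mu\ast}}\langle I'_\vr(u_n),u_n\rangle$ so that the lower critical Choquard term cancels, the upper critical term enters with the positive coefficient $\frac{1}{2N-\mu}$, and the quadratic part carries the coefficient $\frac{2_{\mu\ast}-1}{2\cdot 2_{\mu\ast}}=\frac{N-\mu}{2(2N-\mu)}>0$, after which the quadratic inequality in $\|u_n\|_\vr$ yields boundedness.
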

\begin{proof}
Let $\{u_n\}$ be a $(PS)_{c_\vr}$ sequence, i.e. $\{u_n\}$ satisfies that
$$
I_\vr(u_n)\to c_\vr \ \ \ \hbox{and} \ \ \
 I'_\vr(u_n)\to 0.
$$
We see that
$$
\aligned
c_\vr&+o_n(1)\|u_n\|_\vr\\
&=
I_\vr(u_n)-\frac{1}{2\cdot2_{\mu \ast}}(I'_\vr(u_n), u_n) \\ &=\frac{N-\mu}{2(2N-\mu)}\|u_n\|^2_\vr+\frac{1}{2N-\mu}\int_{\R^N}\int_{\R^N}
\frac{|u(x)|^{2_{\mu}^{\ast}}|u(y)|^{2_{\mu}^{\ast}}}{|x-y|^{\mu}}dxdy\\
&\geq\frac{N-\mu}{2(2N-\mu)}\|u_n\|^2_\vr,
\endaligned
$$
which means $\{u_n\}$ is bounded.
\end{proof}

Hence, without loss of generality, we may assume that $u_n\rightharpoonup u$ in $E$ and $L^2(\R^N)$, $u_n\to u$ in
$L^s_{loc}(\R^N)$ for $1\leq s< 2^*$, and $u_n(x)\to u(x)$ a.e. for
$x\in\R^N$. Clearly $u$ is a critical point of $I_{\vr}$. Denote
$$
\|u\|_{NL}:=\left(\int_{\mathbb{R}^N}\int_{\mathbb{R}^N}\frac{|u(x)|^{2_{\mu}^{\ast}}|u(y)|^{2_{\mu}^{\ast}}}
{|x-y|^{\mu}}dxdy\right)^{\frac{1}{2\cdot2_{\mu}^{\ast}}},
$$
the following splitting Lemma was proved in \cite{GY}.
\begin{lem} \label{BLN}Let $N\geq3$ and $0<\mu<N$. If $\{u_{n}\}$ is a bounded sequence in $L^{\frac{2N}{N-2}}(\mathbb{R}^N)$ such that $u_{n}\rightarrow u$ almost everywhere in $\mathbb{R}^N$ as $n\rightarrow\infty$, then the following hold,
$$
\|u_{n}\|_{NL}^{2\cdot2_{\mu}^{\ast}}
-\|u_{n}-u\|_{NL}^{2\cdot2_{\mu}^{\ast}}\rightarrow\|u\|_{NL}^{2\cdot2_{\mu}^{\ast}}
$$
as $n\rightarrow\infty$.
\end{lem}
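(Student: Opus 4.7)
The plan is to reduce this nonlocal splitting to two scalar ingredients: a generalized Brezis--Lieb lemma for powers and the continuity of the underlying bilinear form granted by Hardy--Littlewood--Sobolev. Introduce
$$
B(f,g):=\int_{\R^N}\int_{\R^N}\frac{f(x)g(y)}{|x-y|^{\mu}}\,dx\,dy,
$$
which by Proposition \ref{HLS} is continuous on $L^{2N/(2N-\mu)}(\R^N)\times L^{2N/(2N-\mu)}(\R^N)$, and set $a_n:=|u_n|^{2_{\mu}^{\ast}}$, $b_n:=|u_n-u|^{2_{\mu}^{\ast}}$, $c:=|u|^{2_{\mu}^{\ast}}$. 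Since $2_{\mu}^{\ast}\cdot\frac{2N}{2N-\mu}=\frac{2N}{N-2}$, the $L^{2N/(N-2)}$-boundedness of $\{u_n\}$ transfers to $L^{2N/(2N-\mu)}$-boundedness of $\{a_n\}$ and $\{b_n\}$. Writing $a_n=b_n+(a_n-b_n)$ and using bilinearity and symmetry of $B$ gives the algebraic identity
$$
B(a_n,a_n)-B(b_n,b_n)=B(a_n-b_n,\,a_n-b_n)+2\,B(a_n-b_n,\,b_n),
$$
so it suffices to analyse the two terms on the right.

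Next I would establish two convergence statements. First, $a_n-b_n\to c$ strongly in $L^{2N/(2N-\mu)}(\R^N)$: this is the generalized Brezis--Lieb lemma for powers, which rests on the pointwise inequality
$$
\bigl||s+t|^{2_{\mu}^{\ast}}-|s|^{2_{\mu}^{\ast}}-|t|^{2_{\mu}^{\ast}}\bigr|^{\frac{2N}{2N-\mu}}\leq \eta\,|s|^{\frac{2N}{N-2}}+C_{\eta}\,|t|^{\frac{2N}{N-2}}
$$
(valid since $2_{\mu}^{\ast}\cdot\frac{2N}{2N-\mu}=\frac{2N}{N-2}$), combined with $u_n\to u$ a.e., the uniform $L^{2N/(N-2)}$ bound, and a Vitali-type argument applied to the a.e.\ null sequence $|a_n-b_n-c|^{2N/(2N-\mu)}$. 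Second, $b_n\rightharpoonup 0$ weakly in $L^{2N/(2N-\mu)}$: this is immediate from $u_n-u\to 0$ a.e.\ together with reflexivity of $L^{2N/(2N-\mu)}$ and the boundedness of $\{b_n\}$.

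With these in hand, the HLS continuity of $B$ yields $B(a_n-b_n,a_n-b_n)\to B(c,c)=\|u\|_{NL}^{2\cdot 2_{\mu}^{\ast}}$. For the cross term I would rewrite
$$
B(a_n-b_n,b_n)=\int_{\R^N}\bigl((a_n-b_n)\ast|x|^{-\mu}\bigr)(y)\,b_n(y)\,dy,
$$
and invoke HLS in the form that $f\mapsto f\ast|x|^{-\mu}$ is bounded from $L^{2N/(2N-\mu)}$ into the dual space $L^{2N/\mu}$; consequently $(a_n-b_n)\ast|x|^{-\mu}\to c\ast|x|^{-\mu}$ strongly in $L^{2N/\mu}$, and pairing this strong convergence against $b_n\rightharpoonup 0$ in $L^{2N/(2N-\mu)}$ forces $B(a_n-b_n,b_n)\to 0$. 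Combining the two limits yields the desired splitting. The main technical obstacle is the generalized Brezis--Lieb convergence in $L^{2N/(2N-\mu)}$: this is not the classical $L^{1}$ form but the $L^{p/\alpha}$ variant for powers, and establishing the requisite equi-integrability of $|a_n-b_n-c|^{2N/(2N-\mu)}$ is where the pointwise inequality above does the real work.
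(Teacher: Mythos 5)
Your proof is correct, and it is essentially the standard argument (also the one in the cited reference \cite{GY}, to which the paper defers): expand the bilinear form $B$ in terms of $a_n-b_n$ and $b_n$, apply the generalized Brezis--Lieb lemma to get $|u_n|^{2_\mu^*}-|u_n-u|^{2_\mu^*}\to |u|^{2_\mu^*}$ strongly in $L^{2N/(2N-\mu)}$, and kill the cross term by pairing the strongly convergent Riesz potential against the weak limit $b_n\rightharpoonup 0$. The two ingredients you flag as the technical heart (the pointwise power inequality driving equi-integrability, and the $L^{2N/(2N-\mu)}\to L^{2N/\mu}$ boundedness of convolution with $|x|^{-\mu}$ from Hardy--Littlewood--Sobolev) are exactly what the literature uses, so there is no gap.
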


\begin{lem}\label{PSlemma:2}
Suppose that conditions $(V_1)-(V_3)$ hold. For $0<\vr<\mathcal{E}_{\de}$ small, let
$\{u_n\}$ be a $(PS)_{c_\vr}$ sequence for $I_{\vr}$. One has along a subsequence in Lemma \ref{PSlemma:1}:
\begin{itemize}
\item[$(1).$] $I_{\vr}(u_n-u)\to c_\vr-I_{\vr}(u)\geq0$;
\item[$(2).$] $I'_{\vr}(u_n-u)\to 0$.
\end{itemize}
\end{lem}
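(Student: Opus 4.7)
The plan is to run a Brezis--Lieb type decomposition on each of the three pieces of $I_\vr$ (the Hilbert norm, the upper critical nonlocal term, and the lower critical nonlocal term) to obtain (1), and then establish (2) at the level of the derivative by an analogous splitting for the nonlinearity itself.

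First, since $\{u_n\}$ is bounded in $E$ by Lemma \ref{PSlemma:1} and $u_n\to u$ almost everywhere, the standard Brezis--Lieb identity applied to the inner product of $E$ gives $\|u_n\|^2_\vr = \|u_n-u\|^2_\vr + \|u\|^2_\vr + o(1)$. Lemma \ref{BLN} furnishes the splitting for the upper critical nonlocal term. For the lower critical term, note that $2\cdot 2_{\mu\ast} = 2(2N-\mu)/N$, so $|u_n|^{2_{\mu\ast}}$ is bounded in $L^{2N/(2N-\mu)}$ via the embedding $E\hookrightarrow L^2(\R^N)$ ensured by $(V_1)$; repeating the proof of Lemma \ref{BLN} verbatim with the exponent $2_{\mu\ast}$ and the sharp HLS exponents $t=r=2N/(2N-\mu)$ yields the analogous identity. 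Summing the three splittings together with $I_\vr(u_n)\to c_\vr$ produces $I_\vr(u_n-u)\to c_\vr-I_\vr(u)$. For the nonnegativity claim in (1), set $v_n:=u_n-u$ and use (2): exactly as in the proof of Lemma \ref{PSlemma:1},
\[
I_\vr(v_n)-\tfrac{1}{2\cdot 2_{\mu\ast}}\langle I'_\vr(v_n),v_n\rangle \geq \tfrac{N-\mu}{2(2N-\mu)}\|v_n\|^2_\vr \geq 0,
\]
and passing to the limit in $n$ closes the argument.

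For (2), I would fix $\varphi\in E$ with $\|\varphi\|_\vr\leq 1$ and expand $\langle I'_\vr(u_n-u),\varphi\rangle$. The quadratic part reduces to $(u_n,\varphi)_\vr - (u,\varphi)_\vr$, and combined with $\langle I'_\vr(u_n),\varphi\rangle\to 0$ and $\langle I'_\vr(u),\varphi\rangle=0$ the task is to check, for both $q=2_\mu^{\ast}$ and $q=2_{\mu\ast}$,
\begin{align*}
\int\Big(\tfrac{1}{|\cdot|^\mu}*|u_n|^q\Big)|u_n|^{q-2}u_n\varphi\,dx
&= \int\Big(\tfrac{1}{|\cdot|^\mu}*|u_n-u|^q\Big)|u_n-u|^{q-2}(u_n-u)\varphi\,dx \\
&\quad + \int\Big(\tfrac{1}{|\cdot|^\mu}*|u|^q\Big)|u|^{q-2}u\varphi\,dx + o(1),
\end{align*}
uniformly in $\|\varphi\|_\vr\leq 1$. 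Each identity follows from HLS combined with the pointwise convergence $u_n\to u$ a.e.\ and weak convergence of $|u_n|^{q-2}u_n$ to $|u|^{q-2}u$ in the dual critical Lebesgue space, which forces the three cross convolution terms to vanish as $n\to\infty$.

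The main obstacle is precisely this derivative-level Brezis--Lieb splitting: one must show that cross terms of the form $\int(|\cdot|^{-\mu}*|u|^q)|u_n-u|^{q-2}(u_n-u)\varphi\,dx$ vanish \emph{uniformly} in $\|\varphi\|_\vr\leq 1$. Double criticality means neither $q=2_\mu^{\ast}$ nor $q=2_{\mu\ast}$ sits in a compactly embedded Lebesgue space on all of $\R^N$, so the argument must combine HLS with dual exponents $2N/(N+2-\mu)$ (upper critical) or $2N/(2N-\mu)$ (lower critical), local Rellich compactness of $u_n$, a cut-off outside a large ball, and the integrability of $u$ at infinity to control the tails.
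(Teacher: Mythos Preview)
Your approach is essentially the same as the paper's, only far more explicit: the paper's proof is a single sentence invoking the nonlocal Brezis--Lieb lemma (Lemma~\ref{BLN}) together with the fact that $u$ is a critical point to conclude $\{u_n-u\}$ is a $(PS)_{c_\vr-I_\vr(u)}$ sequence, and then cites the computation of Lemma~\ref{PSlemma:1} for nonnegativity. You have correctly unpacked what that one sentence entails, including the observation (glossed over in the paper) that Lemma~\ref{BLN} is stated only for the upper critical exponent and that a parallel statement for $2_{\mu\ast}$ is needed, as well as the derivative-level splitting required for (2).
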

\begin{proof}
Notice that $u$ is a critical point of $I_{\vr}$, by the nonlocal Brezis-Lieb type lemma \ref{BLN}, we see that $\{u_n-u\}$ is a  $(PS)_{c_\vr-I_{\vr}(u)}$ sequence for $I_{\vr}$. From the arguments in Lemma \ref{PSlemma:1}, we also have $c_\vr-I_{\vr}(u)\geq0$.
\end{proof}
Next we denote by
$$
w_{n}:=u_{n}-u,
$$
 then $u_{n}\to u$ in $E$ if and only if $w_n\to 0$ in $E$.

\begin{lem}\label{T0}
Suppose that conditions $(V_1)-(V_3)$ hold.  Then for any $\eta>0$ there exists $\mathcal{E}_\eta$ such that, for any $\vr\leq\mathcal{E}_\eta$ there holds
$$
\int_{\R^N}|w_{n}|^2dx\leq\eta
$$
for $n$ large enough.
\end{lem}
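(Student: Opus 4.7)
The plan is to decompose $\R^N$ using the finite measure set $\mathcal{V}^b$ from assumption $(V_1)$, and to show that the $L^2$-norm of $w_n$ is controlled on each piece by $\|w_n\|_\vr^2$, which itself is small thanks to the Mountain--Pass upper bound from Lemma~\ref{AF}.

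First, I would split and estimate
\[
\int_{\R^N}|w_n|^2\,dx \;=\; \int_{(\mathcal{V}^b)^c}|w_n|^2\,dx \;+\; \int_{\mathcal{V}^b}|w_n|^2\,dx.
\]
On the first piece, since $V(x)\geq b$, the trivial bound $|w_n|^2\leq b^{-1}V(x)|w_n|^2$ gives
\[
\int_{(\mathcal{V}^b)^c}|w_n|^2\,dx \;\leq\; \frac{1}{b}\int_{\R^N}V(x)|w_n|^2\,dx \;\leq\; \frac{1}{b}\|w_n\|_\vr^2.
\]
On $\mathcal{V}^b$, which has finite measure, H\"older's inequality with exponents $N/2$ and $N/(N-2)$ together with the Sobolev inequality $S|w_n|_{2^*}^2\leq \int|\nabla w_n|^2\leq \vr^{-2}\|w_n\|_\vr^2$ yields
\[
\int_{\mathcal{V}^b}|w_n|^2\,dx \;\leq\; |\mathcal{V}^b|^{2/N}|w_n|_{2^*}^2 \;\leq\; \frac{|\mathcal{V}^b|^{2/N}}{S\,\vr^2}\|w_n\|_\vr^2.
\]

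Second, I would estimate $\|w_n\|_\vr^2$ using the Palais--Smale machinery. By Lemma~\ref{PSlemma:2}, $\{w_n\}$ is a $(PS)_{c_\vr-I_\vr(u)}$ sequence, so repeating the computation of Lemma~\ref{PSlemma:1} for $w_n$ gives
\[
\limsup_{n\to\infty}\|w_n\|_\vr^2 \;\leq\; \frac{2(2N-\mu)}{N-\mu}\bigl(c_\vr-I_\vr(u)\bigr).
\]
Testing the critical point identity $I'_\vr(u)=0$ against $u$ yields $I_\vr(u)\geq 0$, hence the right-hand side is bounded by $\frac{2(2N-\mu)}{N-\mu}c_\vr$. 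Combining with Lemma~\ref{AF}, which gives $c_\vr\leq \delta\frac{N-\mu}{2N-\mu}2^{N/(N-\mu)}\vr^{\alpha}$ with $\alpha:=\frac{2(1-2\tau)(2N-\mu)}{N-\mu}$, for $n$ large we arrive at
\[
\int_{\R^N}|w_n|^2\,dx \;\leq\; C_1\vr^{\alpha} \;+\; C_2\vr^{\alpha-2}
\]
with constants $C_1,C_2$ independent of $\vr$ and $n$.

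The main obstacle is to verify that $\alpha>2$, so that $\vr^{\alpha-2}\to 0$. This inequality is equivalent to $\tau<\frac{N}{2(2N-\mu)}$; combined with the hypothesis $\tau\leq \frac{1}{N-\mu+2}$ from $(V_3)$, the required comparison reduces algebraically to $(N-2)(N-\mu)>0$, which holds strictly for $N\geq 3$ and $0<\mu<N$. Given any $\eta>0$, we then choose $\mathcal{E}_\eta>0$ small enough so that $C_1\mathcal{E}_\eta^{\alpha}+C_2\mathcal{E}_\eta^{\alpha-2}\leq \eta$, completing the proof for all $\vr\leq \mathcal{E}_\eta$.
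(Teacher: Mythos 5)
Your proof is correct and follows essentially the same route as the paper's: bound $\int|w_n|^2$ by a multiple of $\vr^{-2}\|w_n\|_\vr^2$, control $\|w_n\|_\vr^2$ by $c_\vr - I_\vr(u)+o_n(1)$ via the Palais--Smale estimate of Lemma~\ref{PSlemma:1} together with $I_\vr(u)\geq 0$, and then conclude from the Mountain--Pass bound $c_\vr\lesssim\delta\vr^{\alpha}$ with $\alpha>2$. The only difference is that where the paper simply invokes the $\vr$-independent embedding $E\hookrightarrow H^1(\R^N)$ from $(V_1)$, you re-derive it explicitly via the split $\R^N=\mathcal V^b\cup(\mathcal V^b)^c$, H\"older on the finite-measure set, and Sobolev --- which is the content of that cited embedding, so your write-up is self-contained but not a genuinely different argument.
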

\begin{proof}
Since the Hilbert space $E$ embeds continuously in $H^1(\R^N)$ (see \cite{DL,Si}), we know there exists a constant $C>0$ independent of $\vr$ such that
$$
\int_{\mathbb{R}^N}(|\nabla u|^{2}+|u|^{2})dx\leq C\int_{\mathbb{R}^N}(|\nabla u|^{2}+ V(x)|u|^{2})dx.
$$
Hence, from the proof of Lemma \ref{PSlemma:1}, we know that $\{w_{n}\}$ is bounded and satisfies
$$
\vr^2\int_{\mathbb{R}^N}(|\nabla w_{n}|^{2}+|w_{n}|^{2})dx\leq C_1\big(c_\vr-I_{\vr}(u)+o_n(1)\big).
$$
Consequently, by $I_{\vr}(u)\geq 0$, we know
$$
\int_{\mathbb{R}^N}|w_{n}|^{2}dx\leq C_1\frac{\big(c_\vr-I_{\vr}(u)+o_n(1)\big)}{\vr^2}\leq  C_1\frac{c_\vr+o_n(1)}{\vr^2}.
$$
Recall that, for any $\de$ there exists $\mathcal{E}_{\de}$, such that, for $\vr<\mathcal{E}_{\de}$ there holds
$$
c_\vr\leq\de\frac{N-\mu}{2N-\mu}2^{\frac{N}{N-\mu}}\vr^{\frac{2(1-2\tau)(2N-\mu)}{N-\mu}}.
$$
Since $0<\tau\leq\frac{1}{N-\mu+2}$, we know
$$
\frac{2(1-2\tau)(2N-\mu)}{N-\mu}-2=\frac{2N(1-4\tau)+4\mu\tau}{N-\mu}>0,
$$
then the Mountain Pass value satisfies
$$
c_\vr=o(\vr^2),\ \ \hbox{as}\ \  \vr\to0.
$$
Thus, for any $\eta>0$, there exists $\mathcal{E}_\eta<\mathcal{E}_{\de}$ such that
$$
\int_{\mathbb{R}^N}|w_{n}|^{2}dx\leq C_1\frac{\big(c_\vr-I_{\vr}(u)+o_n(1)\big)}{\vr^2}< \eta +\frac{o_n(1)}{\vr^2}
$$
for any $\vr\leq\mathcal{E}_\eta$. Consequently, for such $\vr$, we know
$$
\int_{\mathbb{R}^N}|w_{n}|^{2}dx\leq \eta
$$
for $n$ large enough.
\end{proof}

\begin{lem}\label{PSlemma:3}
Suppose that conditions $(V_1)-(V_3)$ hold. For $\vr>0$ small enough, let
$\{u_n\}$ be a $(PS)_{c_\vr}$ sequence for $I_{\vr}$ with
$$
c_{\vr}< \frac{N-\mu+2}{2(2N-\mu)}S_{H,L}^{\frac{2N-\mu}{N-\mu+2}}\vr^{\frac{2(2N-\mu)}{N-\mu+2}},
$$
then it contains a convergent subsequence.
\end{lem}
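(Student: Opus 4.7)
The plan is to argue by contradiction: if no subsequence of $\{u_n\}$ converges strongly in $E$, then $c_\vr$ must equal or exceed the stated threshold. By Lemma \ref{PSlemma:1} the sequence is bounded in $E$, so along a subsequence $u_n\rightharpoonup u$ weakly in $E$ with $u_n\to u$ a.e., and standard arguments for nonlocal Choquard functionals identify $u$ as a critical point of $I_\vr$. Combining $I_\vr(u)$ with $\tfrac{1}{2\cdot 2_{\mu \ast}}\langle I_\vr'(u),u\rangle = 0$ collects everything into manifestly non-negative pieces,
$$I_\vr(u)=\frac{2_{\mu \ast}-1}{2\cdot 2_{\mu \ast}}\|u\|_\vr^2+\frac{1}{2N-\mu}\int_{\R^N}\int_{\R^N}\frac{|u(x)|^{2_{\mu}^{\ast}}|u(y)|^{2_{\mu}^{\ast}}}{|x-y|^\mu}dxdy\geq 0,$$
and by Lemma \ref{PSlemma:2} the sequence $w_n:=u_n-u$ is a $(PS)_{c_\vr-I_\vr(u)}$ sequence for $I_\vr$.

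The next step is to eliminate the lower critical nonlocal term. Assumption $(V_1)$ implies the compact embedding $E\hookrightarrow L^2(\R^N)$: for $v\in E$ the tails split as
$$\int_{B_R^c}|v|^2 dx\leq \tfrac{1}{b}\int_{\{V\geq b\}}Vv^2 dx+|\mathcal{V}^b\cap B_R^c|^{2/N}|v|_{2^\ast}^2,$$
and since $|\mathcal{V}^b|<\infty$ the second term tends to $0$ as $R\to\infty$ uniformly on $E$-bounded sets, while Rellich's theorem controls the part on $B_R$. Hence $w_n\to 0$ strongly in $L^2(\R^N)$, and the Hardy--Littlewood--Sobolev inequality applied with $t=r=\tfrac{2N}{2N-\mu}$ yields
$$B_n:=\int_{\R^N}\int_{\R^N}\frac{|w_n(x)|^{2_{\mu \ast}}|w_n(y)|^{2_{\mu \ast}}}{|x-y|^\mu}dxdy\leq C(N,\mu)|w_n|_2^{2\cdot 2_{\mu \ast}}\to 0.$$

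From $\langle I_\vr'(w_n),w_n\rangle\to 0$ together with $B_n\to 0$ one gets $\|w_n\|_\vr^2 = A_n + o(1)$, where $A_n$ denotes the upper critical nonlocal integral. Passing to a further subsequence $A_n\to a\geq 0$, and the definition of $S_{H,L}$ together with $\vr^2\int|\nabla w_n|^2\leq\|w_n\|_\vr^2$ yield in the limit $\vr^2 S_{H,L}\,a^{(N-2)/(2N-\mu)}\leq a$, so either $a=0$ or $a\geq(\vr^2 S_{H,L})^{(2N-\mu)/(N-\mu+2)}$. On the other hand, the algebraic identity
$$I_\vr(w_n)-\frac{1}{2\cdot 2_{\mu}^{\ast}}\langle I_\vr'(w_n),w_n\rangle=\frac{N-\mu+2}{2(2N-\mu)}\|w_n\|_\vr^2-\frac{1}{2N-\mu}B_n$$
passes to the limit as $c_\vr-I_\vr(u)=\tfrac{N-\mu+2}{2(2N-\mu)}a$, so the second alternative, combined with $I_\vr(u)\geq 0$, would force
$$c_\vr\geq\frac{N-\mu+2}{2(2N-\mu)}S_{H,L}^{(2N-\mu)/(N-\mu+2)}\vr^{2(2N-\mu)/(N-\mu+2)},$$
contradicting the hypothesis. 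Hence $a=0$, $\|w_n\|_\vr\to 0$, and $u_n\to u$ in $E$. The main technical obstacle is the compact embedding $E\hookrightarrow L^2(\R^N)$: without it a residual lower critical mass $B_n\not\to 0$ would introduce a negative correction $-\tfrac{1}{2N-\mu}\lim B_n$ that could close the gap in the threshold comparison, and the clean dichotomy on $a$ produced by $S_{H,L}$ would no longer suffice.
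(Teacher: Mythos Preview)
Your argument has a genuine gap: the embedding $E\hookrightarrow L^2(\R^N)$ is \emph{not} compact under $(V_1)$ alone. Take $V\equiv b$ for $|x|\geq 1$, interpolated smoothly so that $V(0)=0$; then $\mathcal V^b$ has finite measure, $(V_1)$--$(V_3)$ can all be arranged, but $E=H^1(\R^N)$ with an equivalent norm, and translated bumps give $w_n\rightharpoonup 0$ with $|w_n|_2\not\to 0$. Your own tail split exposes the problem: the term $\frac{1}{b}\int_{\{V\geq b\}}Vv^2\,dx$ is bounded by $\frac{1}{b}\|v\|_V^2$ but does not tend to $0$ as $R\to\infty$, so the decomposition does not yield uniform tail decay on $E$-bounded sets. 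Consequently you cannot conclude $B_n\to 0$, and as you yourself note at the end, without that the threshold comparison collapses.

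The paper does not attempt to make $B_n$ vanish. Instead, Lemma~\ref{T0} (which rests on $c_\vr=o(\vr^2)$, itself a consequence of $(V_3)$ and the range $0<\tau\leq\frac{1}{N-\mu+2}$) shows that for each $\eta>0$ there is $\mathcal E_\eta>0$ with $\int|w_n|^2\leq\eta$ whenever $\vr\leq\mathcal E_\eta$ and $n$ is large. Then, rather than discarding the potential, one replaces $\int V|w_n|^2$ by $\int V_b|w_n|^2\geq b|w_n|_2^2$ (using $|\mathcal V^b|<\infty$ and $w_n\to 0$ in $L^2_{\mathrm{loc}}$) and balances this against the HLS bound $B_n\leq C(N,\mu)|w_n|_2^{2(2N-\mu)/N}$. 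Choosing $\eta=(b/C(N,\mu))^{N/(N-\mu)}$ makes the factor $C(N,\mu)|w_n|_2^{2(N-\mu)/N}-b$ nonpositive, and then the clean inequality $\vr^2 S_{H,L}A_n^{(N-2)/(2N-\mu)}\leq A_n+o_n(1)$ and your dichotomy on $a$ go through. This balancing act is precisely where the hypothesis ``$\vr$ small enough'' enters the lemma---a requirement your argument never invokes and cannot account for.
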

\begin{proof}
We need only to check that the
$(PS)_{c_\vr}$ sequence $\{u_n\}$ contains a strongly convergent subsequence.
Let
$$
V_{b}(x):=\max\{V(x), b\},
$$
where $b$ is the positive constant from assumption $(V_1)$. Since the set $\mathcal{V}^b$ has finite measure and $w_{n}\to0$ in $L^2_{loc}$, we see that
$$
\int_{\R^N} V(x)|u_n|^2dx=\int_{\R^N} V_{b}(x)|u_n|^2dx+o_n(1).
$$
From Lemma \ref{PSlemma:2},  along a subsequence, we have
\begin{center}
 $I_{\vr}(w_n)\to c_\vr-I_{\vr}(u)$\ \
and \ \  $I'_{\vr}(w_n)\to 0$.
\end{center}
Thus
$$
I_{\vr}(w_n)-\frac12(I'_{\vr}(w_n),w_{n})\geq\frac{N-\mu+2}{2(2N-\mu)}
\int_{\R^N}\int_{\R^N}\frac{|w_n(x)|^{2_{\mu}^{\ast}}|w_n(y)|^{2_{\mu}^{\ast}}}{|x-y|^{\mu}}dxdy,
$$
i.e.
\begin{equation}\label{ECT}
\int_{\R^N}\int_{\R^N}\frac{|w_n(x)|^{2_{\mu}^{\ast}}|w_n(y)|^{2_{\mu}^{\ast}}}{|x-y|^{\mu}}dxdy\leq \frac{2(2N-\mu)}{N-\mu+2}\big(c_{\vr}-I_{\vr}(u)\big)+o_n(1).
\end{equation}
On the other hand, by the definition of $S_{H,L}$ we know
$$\aligned
\vr^2S_{H,L}&\Big(\displaystyle\int_{\R^N}\int_{\R^N}
\frac{|w_n(x)|^{2_{\mu}^{\ast}}|w_n(y)|^{2_{\mu}^{\ast}}}{|x-y|^{\mu}}dxdy
\Big)^{\frac{N-2}{2N-\mu}}\\
&\leq \int_{\R^N}\Big(\vr^2|\nabla w_n|^2+ V(x)|w_n|^2\Big)dx-\int_{\R^N} V(x)|w_n|^2dx\\
&\leq C(N,\mu)\Big(\int_{\R^N}|w_n|^2dx\Big)^{\frac{2N-\mu}{N}}+\int_{\R^N}\int_{\R^N}
\frac{|w_n(x)|^{2_{\mu}^{\ast}}|w_n(y)|^{2_{\mu}^{\ast}}}{|x-y|^{\mu}}dxdy-\int_{\R^N} V_{b}(x)|w_n|^2dx+o_n(1)\\
&\leq  C(N,\mu)\Big(\int_{\R^N}|w_n|^2dx\Big)^{\frac{2N-\mu}{N}}+\int_{\R^N}\int_{\R^N}
\frac{|w_n(x)|^{2_{\mu}^{\ast}}|w_n(y)|^{2_{\mu}^{\ast}}}{|x-y|^{\mu}}dxdy-b\int_{\R^N} |w_n|^2dx+o_n(1)\\
&=C(N,\mu)\Big(\Big(\int_{\R^N}|w_n|^2dx\Big)^{\frac{N-\mu}{N}}-\frac{b}{C(N,\mu)}\Big)\int_{\R^N} |w_n|^2dx +\int_{\R^N}\int_{\R^N}\frac{|w_n(x)|^{2_{\mu}^{\ast}}|w_n(y)|^{2_{\mu}^{\ast}}}
{|x-y|^{\mu}}dxdy+o_n(1).
\endaligned
$$
Choosing
$$\eta=\Big(\frac{b}{C(N,\mu)}\Big)^{\frac{N}{N-\mu}},$$
by Lemma \ref{T0} there exists $\mathcal{E}_\eta>0$ such that, for any $\vr\leq\mathcal{E}_\eta$ there holds
$$
\int_{\R^N}|w_{n}|^2dx\leq\eta
$$
for $n$ large enough. Therefore
$$\aligned
\vr^2S_{H,L}&\Big(\displaystyle\int_{\R^N}\int_{\R^N}
\frac{|w_n(x)|^{2_{\mu}^{\ast}}|w_n(y)|^{2_{\mu}^{\ast}}}{|x-y|^{\mu}}dxdy\Big)^{\frac{N-2}{2N-\mu}}\leq\int_{\R^N}\int_{\R^N}\frac{|w_n(x)|^{2_{\mu}^{\ast}}|w_n(y)|^{2_{\mu}^{\ast}}}{|x-y|^{\mu}}dxdy+o_n(1).
\endaligned
$$

Assume now that $\{u_n\}$ has no convergent subsequence, then
$\liminf_{n\to\infty}\|w_n\|_{\vr}>0$ and $c-I_{\vr}(u)>0$. Thus we can get
$$\aligned
\vr^2S_{H,L}\leq\Big(\displaystyle\int_{\R^N}\int_{\R^N}
\frac{|w_n(x)|^{2_{\mu}^{\ast}}|w_n(y)|^{2_{\mu}^{\ast}}}{|x-y|^{\mu}}dxdy\Big)^{\frac{N-\mu+2}{2N-\mu}}+o_n(1).
\endaligned
$$
By \eqref{ECT}, we know
$$\aligned
\vr^2S_{H,L}\leq\Big(\frac{2(2N-\mu)}{N-\mu+2}\Big)^{\frac{N-\mu+2}{2N-\mu}}\Big(c_{\vr}-I_{\vr}(u)\Big)^{\frac{N-\mu+2}{2N-\mu}}+o_n(1),
\endaligned
$$
which means
$$\aligned
c_{\vr}\geq \frac{N-\mu+2}{2(2N-\mu)}S_{H,L}^{\frac{2N-\mu}{N-\mu+2}}\vr^{\frac{2(2N-\mu)}{N-\mu+2}},
\endaligned
$$
this is a contradiction.
\end{proof}

{\bf Proof of Theorem \ref{1}.}  For $\de>0$,  from Lemma \ref{AF} and Lemma \ref{PSlemma:3}, we know there exists $\mathcal{E}_\de>0$ such that, for any $\vr<\mathcal{E}_\de$ the functional $I_\vr$ possesses a $(PS)_{c_\vr}$ sequence $\{u_n\}$ with
$$
c_\vr\leq\de\frac{N-\mu}{2N-\mu}2^{\frac{N}{N-\mu}}\vr^{\frac{2(1-2\tau)(2N-\mu)}{N-\mu}}.
$$
Since $0<\tau\leq\frac{1}{N-\mu+2}$, we have
$$
\frac{2(1-2\tau)(2N-\mu)}{N-\mu}\geq \frac{2(2N-\mu)}{N-\mu+2},
$$
thus if $\de$ is small enough, we know
$$
\de\frac{N-\mu}{2N-\mu}2^{\frac{N}{N-\mu}}\vr^{\frac{2(1-2\tau)(2N-\mu)}{N-\mu}}\leq \frac{N-\mu+2}{2(2N-\mu)}S_{H,L}^{\frac{2N-\mu}{N-\mu+2}}\vr^{\frac{2(2N-\mu)}{N-\mu+2}}.
$$
Applying Lemma \ref{PSlemma:3}, we know that $\{u_n\}$
contains a convergent subsequence. The Mountain Pass Theorem implies that
there is $u_\vr\in E$ such that $I'_\vr(u_\vr)=0$ and
$I_\vr(u_\vr)=c_\vr$. Moreover, one can see that
$$
\|u_\vr\|^2_\vr\leq \de2^{\frac{2N-\mu}{N-\mu}}\vr^{\frac{2(1-2\tau)(2N-\mu)}{N-\mu}}.
$$
Furthermore, since $0<\tau\leq\frac{1}{N-\mu+2}$, we know
$$
\frac{2(1-2\tau)(2N-\mu)}{N-\mu}-2=\frac{2N(1-4\tau)+4\mu\tau}{N-\mu}>0,
$$
then we have
$$
\int_{\R^N}\Big(|\nabla u_\vr|^2+ V(x)|u_\vr|^2\Big)dx\leq  \de2^{\frac{2N-\mu}{N-\mu}}\vr^{\frac{2N(1-4\tau)+4\mu\tau}{N-\mu}},
$$
which means that $u_\vr$ goes to $0$, as $\vr\to0$.

\subsection{Multiple semiclassical states}
In order to obtain the multiplicity of critical points, we will apply the index theory defined by the Krasnoselski genus.
Denote the set of all symmetric (in the sense that $-A=A$) and
closed subsets of $E$ by $\Sigma$. For each $A\in\Sigma$, let
$gen(A)$ be the Krasnoselski genus and
$$
i(A) := \min_{h\in\Gamma} gen(h(A)\cap \partial B_{\vr}),
$$
where $\Gamma$ is the set of all odd homeomorphisms $h\in
\cc(E,E)$ and $\partial B_{\vr}$ is the closed symmetric set
$$
\partial B_{\vr}:=\Big\{v\in E:\|v\|_\vr=\rho_\vr\Big\}
$$
 such that $I_\vr |_{\partial B_\vr}\geq\kappa_\vr  > 0$.
Then $i$ is a version of Benci's pseudoindex
\cite{B}. Let
\begin{equation}\label{index}
c_{\vr j} := \inf_{i(A)\ge j}\ \sup_{u\in A}I_\vr(u), \quad
1\le j\le m.
\end{equation}
Then if $c_{\vr j}$ is finite and $I_\vr$ satisfies the $(PS)$ condition at $c_{\vr j}$, then we know $c_{\vr j}$ are all critical values for $I_\vr$.

\begin{lem}\label{MPgeometry1}
Let conditions $(V_1)-(V_3)$ be satisfied, then the functional $I_{\vr}$ satisfies:
\begin{itemize}
  \item[$(1).$] for each $\vr>0$, $I_{\vr}(0)=0$  there
exists $\rho_\vr>0$ such that
$\kappa_\vr:=\inf I_\vr(\partial B_{\rho_\vr})>0$ where
$\partial B_{\rho_\vr}=\{u\in E: \ \|u\|_\vr=\rho_\vr\}$;
     \item[$(2).$] for each $\vr>0$ and
any finite-dimensional subspace $F\subset E$, there is
$R=R(\vr,F)>0$ such that $I_\vr(u)\leq 0$ for all $u\in F$ with
$\| u\|_{\vr}\geq R$.
\end{itemize}
\end{lem}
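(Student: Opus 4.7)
The first assertion is nothing more than a restatement of Lemma \ref{MPgeometry}: it is a direct consequence of the Hardy--Littlewood--Sobolev inequality together with the continuous embedding $E\hookrightarrow H^1(\R^N)\hookrightarrow L^{2\cdot 2_{\mu\ast}}(\R^N)\cap L^{2\cdot 2_\mu^\ast}(\R^N)$, which yields the estimate
\[
I_\vr(u)\geq \tfrac12\|u\|_\vr^2 - C_0\|u\|_\vr^{2\cdot 2_{\mu\ast}} - C_1\|u\|_\vr^{2\cdot 2_\mu^\ast},
\]
so that $I_\vr(u)\geq\kappa_\vr>0$ on a small sphere $\partial B_{\rho_\vr}$, since $2\cdot 2_{\mu\ast}>2$ and $2\cdot 2_\mu^\ast>2$. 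No new ideas are needed here; I would simply invoke the calculation already carried out.

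For the second assertion the plan is to exploit the equivalence of norms on a finite-dimensional subspace $F\subset E$. Parametrize $u\in F$ with $\|u\|_\vr=t$ as $u=tv$ with $v$ on the unit sphere $S_F:=\{v\in F:\|v\|_\vr=1\}$. Since $F$ is finite-dimensional, $S_F$ is compact, and the continuous functional
\[
A(v):=\int_{\R^N}\int_{\R^N}\frac{|v(x)|^{2_\mu^\ast}|v(y)|^{2_\mu^\ast}}{|x-y|^\mu}\,dx\,dy
\]
(continuity on $E$ follows from Proposition \ref{HLS} and Sobolev embedding) is strictly positive on $S_F$, hence attains a strictly positive minimum $\alpha:=\min_{v\in S_F}A(v)>0$. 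Dropping the nonnegative lower critical term, one obtains for every $v\in S_F$ and every $t\geq 0$,
\[
I_\vr(tv)\leq \frac{t^2}{2} - \frac{t^{2\cdot 2_\mu^\ast}}{2\cdot 2_\mu^\ast}\,\alpha.
\]
Since $2\cdot 2_\mu^\ast>2$, the right-hand side tends to $-\infty$ as $t\to\infty$, uniformly in $v\in S_F$. Choosing $R=R(\vr,F)$ so large that $\tfrac{R^2}{2}\leq \tfrac{R^{2\cdot 2_\mu^\ast}}{2\cdot 2_\mu^\ast}\alpha$ then gives $I_\vr(u)\leq 0$ for every $u\in F$ with $\|u\|_\vr\geq R$.

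The main (mild) obstacle is to justify $\alpha>0$: it requires noting that $A(v)=0$ forces $v\equiv 0$, which follows from the sharp form of the Hardy--Littlewood--Sobolev inequality (any nonzero $v\in L^{2N/(N-2)}$ gives strictly positive double integral). Once this point is settled, both parts of the lemma are immediate, and the argument for (2) is identical in spirit to the one used in Lemma \ref{MPgeometry} to exhibit the point $e$ needed for the mountain-pass geometry.
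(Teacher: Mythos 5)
Your argument is correct and follows the same strategy as the paper: exploit compactness of the unit sphere of the finite-dimensional subspace $F$ to produce a positive lower bound for the nonlocal term, then use that its power is strictly greater than $2$ to send $I_\vr$ to $-\infty$. The only cosmetic difference is the choice of which negative term to retain — the paper discards the upper-critical term and bounds the $2_{\mu\ast}$-term below by a constant $\nu>0$, while you discard the lower-critical term and use the $2_\mu^\ast$-term; since both exponents $2\cdot 2_{\mu\ast}$ and $2\cdot 2_\mu^\ast$ exceed $2$, either works equally well.
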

\begin{proof} $(1).$ It is proved in Lemma \ref{MPgeometry}.

$(2)$.
Define
$$
\nu:=\inf_{w\in F, \|w\|_\vr=1}\Big\{\int_{\R^N}\int_{\R^N}\frac{|w(x)|^{2_{\mu \ast}}|w(y)|^{2_{\mu \ast}}}{|x-y|^{\mu}}dxdy\Big\}.
$$
Since $F$ is a finite-dimensional subspace of $E$, we must have $\nu>0$.
Therefore
$$
\aligned
I_\vr(u)&\leq\frac12\|u\|^2_{\vr}-\frac{1}{2\cdot2_{\mu\ast}}\int_{\R^N}\int_{\R^N}\frac{|u(x)|^{2_{\mu \ast}}|u(y)|^{2_{\mu \ast}}}
{|x-y|^{\mu}}dxdy\\
&=\frac12\|u\|^2_{\vr}-\frac{\|u\|^{2\cdot2_{\mu \ast}}_\vr}{2\cdot2_{\mu \ast}}\int_{\R^N}\int_{\R^N}\frac{|u(x)|^{2_{\mu \ast}}|u(y)|^{2_{\mu \ast}}}
{\|u\|^{2\cdot2_{\mu \ast}}_\vr|x-y|^{\mu}}dxdy,\\
&\leq\frac12\|u\|^2_{\vr}-\frac{\nu}{2\cdot2_{\mu\ast}}\|u\|^{2\cdot2_{\mu \ast}}_\vr.
\endaligned
$$
Consequently, we know
$$
I_\vr(u)\to-\infty \ \ \ \hbox{as
$\|u\|_\vr\to\infty$} .
$$
\end{proof}

\begin{proof}[Proof of Theorem \ref{2}] From lemma \ref{MPgeometry1}, we know for each $\vr$ there is a closed subset $\partial B_{\rho_\vr}$ of $E$ and $\kappa_\vr > 0$ such that the even functional $I_\vr |_{\partial B_{\rho_\vr}}\geq\kappa_\vr  > 0$.

For any $m\in\N$, one can choose $m$ functions
$\va^j_\de\in\cc^\infty_0(\R^N)$ such that
$Supp\va^i_\de\cap Supp\va^k_\de=\emptyset$ if $i\not=k$,
$$\int_{\R^N}\int_{\R^N}\frac{|\va^j_\de(x)|^{2_{\mu \ast}}|\va^j_\de(y)|^{2_{\mu \ast}}}
{|x-y|^{\mu}}dxdy=1$$  and $$|\nabla\va^j_\de|^2_2<\de.$$ Let
$r^m_\de>0$ be such that $Supp\va^j_\de\subset B_{r^m_\de}(0)$
for $j=1,...,m$. For $\tau$ in assumption $(V_3)$, set
\begin{equation}\label{mpg2}
\psi^j_{\vr}(x):=\psi_\vr(x):=\vr^{-\frac{N}{2}}\va^j_\de(\vr^{-2\tau}x),  s\in \R
\end{equation}
and
$$
H^m_{\vr\de}=Span\{\psi^1_\vr, ..., \psi^m_\vr\}.
$$
For each $u=\sum^m_{j=1}t_j\psi^j_\vr\in H^m_{\vr\de}$, it is easy to see that
$$
\int_{\R^N}|\nabla u|^2dx=\vr^{(2\tau-1)N-4\tau}\sum^m_{j=1}|t_j|^2\int_{\R^N}|\nabla
\va^j_\de|^2dx,
$$
$$
\int_{\R^N}V(x)|u|^2dx=\vr^{(2\tau-1)N}\sum^m_{j=1}|t_j|^2\int_{\R^N}
V(\vr^{2\tau}x)|\va^j_\de(x)|^2dx
$$
and
$$\aligned
\int_{\R^N}\int_{\R^N}\frac{|u(x)|^{2_{\mu \ast}}|u(y)|^{2_{\mu \ast}}}
{|x-y|^{\mu}}dxdy&\geq \sum^m_{j=1}|t_j|^{2\cdot2_{\mu \ast}}\int_{\R^N}\int_{\R^N}\frac{|\va^j_\de(x)|^{2_{\mu \ast}}|\va^j_\de(y)|^{2_{\mu \ast}}}
{|x-y|^{\mu}}dxdy\\
&=m\vr^{(2\tau-1)(2N-\mu)}.
\endaligned
$$
From the fact that $Supp\va^j_\de\subset B_{r^m_\de}(0)$ and $$
 \displaystyle\lim_{|x|\to0}\frac{V(x)}{|x|^{\frac{1-2\tau}{\tau}}}=0,$$
we know that there is $\mathcal{E}_\de>0$ such that, for $\vr<\mathcal{E}_\de$
$$
V(\vr^{2\tau}x)\leq \frac{\vr^{2(1-2\tau)}\de}{\Lambda_\de}
$$
uniformly for $x\in B_{r^m_\de}(0)$ where
$$
\Lambda_\de:=\max\{|\va^j_\de(x)|^{2}_2:j=1,...,m\}.
$$
Similar to the arguments in Lemma \ref{AF}, we obtain
$$
\aligned
 \displaystyle \sup_{u\in H^m_{\vr\de}}I_{\vr}(u)&\leq\sum^m_{j=1}\max_{t_j\in \R}I_\vr(t_j\psi^j_\vr)\\
 &\leq\displaystyle \sum^m_{j=1}\max_{t_j\in \R}\Big\{ \frac{|t_j|^{2}}{2}\int_{\R^N}\vr^2\big|\nabla\psi^j_{\vr}\big|^2dx+\frac{|t_j|^{2}}{2}\int_{\R^N}V(x)|\psi^j_{\vr}|^2dx\\
&\hspace{1cm}-\frac{|t_j|^{2\cdot2_{\mu \ast}} }{2\cdot2_{\mu \ast}}\int_{\R^N}\int_{\R^N}\frac{|\psi^j_\vr(x)|^{2_{\mu \ast}}|\psi^j_\vr(y)|^{2_{\mu \ast}}}
{|x-y|^{\mu}}dxdy\Big\}\vspace{5mm}\\
&\leq\displaystyle m\de\max_{t\in [0,1]}\{\vr^{(2\tau-1)(N-2)}t^2-\frac{\vr^{(2\tau-1)(2N-\mu)}}{2\cdot2_{\mu \ast}}t^{2\cdot2_{\mu \ast}}\}\\
&=m\de\frac{N-\mu}{2N-\mu}2^{\frac{N}{N-\mu}}\vr^{\frac{2(1-2\tau)(2N-\mu)}{N-\mu}}
\endaligned
$$
for all $\vr\leq\mathcal{E}_\de$. Now we define the Minimax values $c_{\vr j}$ by
\begin{equation}\label{}
c_{\vr j} := \inf_{i(A)\ge j}\ \sup_{u\in A}I_\vr(u), \quad
1\le j\le m.
\end{equation}
Since $I_\vr |_{\partial B_{\rho_\vr}}\geq\kappa_\vr> 0$ and $\max I_\vr(H^m_{\vr\de})\leq m\de\frac{N-\mu}{2N-\mu}2^{\frac{N}{N-\mu}}\vr^{\frac{2(1-2\tau)(2N-\mu)}{N-\mu}}$, we know
$$
\kappa_\vr\leq c_{\vr 1}\leq\ldots\le c_{\vr m} \leq \sup_{u\in
H_{\vr m} }I_{\vr}(u)\leq m\de\frac{N-\mu}{2N-\mu}2^{\frac{N}{N-\mu}}\vr^{\frac{2(1-2\tau)(2N-\mu)}{N-\mu}}.
$$
Since $0<\tau\leq\frac{1}{N-\mu+2}$, if $\de$ is small enough, we have
$$
m\de\frac{N-\mu}{2N-\mu}2^{\frac{N}{N-\mu}}\vr^{\frac{2(1-2\tau)(2N-\mu)}{N-\mu}}\leq \frac{N-\mu+2}{2(2N-\mu)}S_{H,L}^{\frac{2N-\mu}{N-\mu+2}}\vr^{\frac{2(2N-\mu)}{N-\mu+2}}.
$$
Take $F=H^m_{\vr\de}$,  it follows from Lemma \ref{PSlemma:3} that all $c_{\vr j}$ are critical
values and $I_{\vr}$ has at least $m$ pairs of nontrivial
critical points satisfying
\[
\kappa_\vr\leq I_\vr(u_\vr)\leq m\de\frac{N-\mu}{2N-\mu}2^{\frac{N}{N-\mu}}\vr^{\frac{2(1-2\tau)(2N-\mu)}{N-\mu}}.
\]
Therefore equation \eqref{SCP2} has at least $m$ pairs of solutions. Furthermore, since $0<\tau\leq\frac{1}{N-\mu+2}$, we know
$$
\frac{2(1-2\tau)(2N-\mu)}{N-\mu}-2=\frac{2N(1-4\tau)+4\mu\tau}{N-\mu}>0,
$$
then we know
$$
\int_{\R^N}\Big(|\nabla u_\vr|^2+ V(x)|u_\vr|^2\Big)dx\leq  \de2^{\frac{2N-\mu}{N-\mu}}\vr^{\frac{2N(1-4\tau)+4\mu\tau}{N-\mu}},
$$
which means that $u_\vr$ goes to $0$, as $\vr\to0$.
\end{proof}

\section{Critical problem without lower perturbation}
In this section we assume that conditions $(V_4)$ and $(V_5)$ hold, $0<\mu<\min\{4,N\}$ and $N\geq3$. To apply the variational methods, we introduce the energy functional associated to equation \eqref{CE2} by
$$
I_{\vr}(u)=\frac{1}{2}\int_{\mathbb{R}^N}(\vr^{2}|\nabla u|^{2}+ V(x)|u|^{2})dx-\frac{1}{2\cdot2_{\mu}^{\ast}}\int_{\mathbb{R}^N}\int_{\mathbb{R}^N}\frac{|u(x)|^{2_{\mu}^{\ast}}|u(y)|^{2_{\mu}^{\ast}}}
{|x-y|^{\mu}}dxdy.
$$
The Hardy--Littlewood--Sobolev inequality implies that $I_{\vr}$ is well defined on $D^{1,2}(\mathbb{R}^N)$ and belongs to $\mathcal{C}^{1}$. And so $u$ is a weak solution of \eqref{CE2} if and only if $u$ is a critical point of the functional $I_{\vr}$.

We need to recall some basic results first. Let $\widetilde{U}_{\delta,z}(x):=\frac{[N(N-2)\delta]^{\frac{N-2}{4}}}{(\delta+|x-z|^{2})^{\frac{N-2}{2}}}$, $\delta>0$, $z\in\mathbb{R}^{N}$.
We know that $\widetilde{U}_{\delta,z}$ is a minimizer for the Sobolev best constant $S$ \cite{Wi} and
\begin{equation}\label{REL}
\aligned
U_{\delta,z}(x):=C(N,\mu)^{\frac{2-N}{2(N-\mu+2)}}S^{\frac{(N-\mu)(2-N)}{4(N-\mu+2)}}\widetilde{U}_{\delta,z}(x)
\endaligned
\end{equation}
is the unique  minimizer for $S_{H,L}$ satisfying
\begin{equation}\label{CCE1}
-\Delta u
=\left(\int_{\mathbb{R}^N}\frac{|u(y)|^{2_{\mu}^{\ast}}}{|x-y|^{\mu}}dy\right)|u|^{2_{\mu}^{\ast}-2}u\hspace{4.14mm}\mbox{in}\hspace{1.14mm} \mathbb{R}^N
\end{equation}
and
$$
\int_{\mathbb{R}^N}|\nabla U_{\delta,z}|^{2}dx=\int_{\mathbb{R}^N}\int_{\mathbb{R}^N}\frac{|U_{\delta,z}(x)|^{2_{\mu}^{\ast}}
|U_{\delta,z}(y)|^{2_{\mu}^{\ast}}}{|x-y|^{\mu}}dxdy=S_{H,L}^{\frac{2N-\mu}{N-\mu+2}}.
$$
To study the semiclassical problem, we need to consider equation
\begin{equation}\label{CCEE1}
-\vr^{2}\Delta u
=\left(\int_{\mathbb{R}^N}\frac{|u(y)|^{2_{\mu}^{\ast}}}{|x-y|^{\mu}}dy\right)|u|^{2_{\mu}^{\ast}-2}u\hspace{4.14mm}\mbox{in}\hspace{1.14mm} \mathbb{R}^N
\end{equation}
and its energy functional defined by
$$
J_{\vr}(u)=\frac{\vr^{2}}{2}\int_{\mathbb{R}^N}|\nabla u|^{2}dx-\frac{1}{2\cdot2_{\mu}^{\ast}}\int_{\mathbb{R}^N}
\int_{\mathbb{R}^N}\frac{|u(x)|^{2_{\mu}^{\ast}}|u(y)|^{2_{\mu}^{\ast}}}{|x-y|^{\mu}}dxdy.
$$
It is easy to see that $u=\vr^{\frac{N-2}{N+2-\mu}}U_{\delta,z}(x)$ is a least energy solution of \eqref{CCEE1} and
$$
J_{\vr}(u)=\vr^{\frac{2(2N-\mu)}{N+2-\mu}}\frac{N+2-\mu}{2(2N-\mu)} S_{H,L}^{\frac{2N-\mu}{N+2-\mu}}.
$$

\subsection{A nonlocal global compactness lemma}
Let
$u\rightarrow u_{r,x_{0}}=r^{\frac{N-2}{2}}u(rx+x_{0})$ be the rescaling, where $r\in\mathbb{R}^{+}$ and $x_{0}\in \mathbb{R}^{N}$. Inspired by \cite{Sm, Wi} we can establish the following global compactness lemma for nonlocal type problems.

\begin{lem}\label{F4} Suppose that conditions $(V_4)$ and $(V_5)$ hold and $N\geq3$, $0<\mu<\min\{4,N\}$. For each $\vr>0$ assume that $\{u_{n}\}\subset D^{1,2}(\mathbb{R}^N)$ is a $(PS)$ sequence for $I_{\vr}$. Then there exist a number $k\in \N$, a solution $u^{0}$ of \eqref{CE2}, solutions $u^{1},... ,u^{k}$ of \eqref{CCEE1}, sequences of points $x_{n}^{1},...,x_{n}^{k}\in\mathbb{R}^N$ and radii $r_{n}^{1},...,r_{n}^{k}>0$ such that for some subsequence $n\rightarrow\infty$
$$
\aligned
&u_{n}^{0}\equiv u_{n}\rightharpoonup u^{0}\ \ \ \mbox{weakly in}\ \ D^{1,2}(\mathbb{R}^N),\\
&u_{n}^{j}\equiv (u_{n}^{j-1}-u^{j-1})_{r_{n}^{j},x_{n}^{j}}\rightharpoonup u^{j}\ \ \ \mbox{weakly in}\ \ D^{1,2}(\mathbb{R}^N),\ \ j=1,...,k.
\endaligned
$$
Moreover as $n\rightarrow\infty$
$$
\aligned
&\|u_{n}\|^{2}\rightarrow \Sigma_{j=0}^{k}\|u^{j}\|^{2},\\
& I_{\vr}(u_{n})\rightarrow I_{\vr}(u^{0})+\displaystyle\Sigma_{j=1}^{k}J_{\vr}(u^{j}).
\endaligned
$$
\end{lem}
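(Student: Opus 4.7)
I would employ the Struwe-type iterative bubbling scheme, with two nonlocal adaptations: the classical Brezis--Lieb splitting is replaced by its Hardy--Littlewood--Sobolev analogue (Lemma \ref{BLN}), and the concentration scale is read off from the Dirichlet measure $|\nabla v_n|^2\,dx$ rather than from the nonlocal double integral (which is not a Radon measure on $\R^N$). At each stage one extracts the weak limit, subtracts it, and uses the scaling symmetry $u\mapsto r^{(N-2)/2}u(rx+x_0)$, which preserves both the Dirichlet integral and the Hardy--Littlewood--Sobolev integral, to renormalise the residual before extracting the next weak limit.

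\textbf{Base step: the regular limit $u^0$.} The identity
\[
\frac{N+2-\mu}{2(2N-\mu)}\int_{\R^N}\bigl(\vr^2|\nabla u_n|^2 + V(x)|u_n|^2\bigr)dx = I_\vr(u_n) - \frac{1}{2\cdot 2_\mu^\ast}\langle I'_\vr(u_n), u_n\rangle
\]
bounds $\{u_n\}$ in $D^{1,2}(\R^N)$. Passing to a subsequence, $u_n \rightharpoonup u^0$ weakly and a.e., and standard limit arguments (Hardy--Littlewood--Sobolev on the nonlocal term, $V\in L^{p_1}\cap L^{p_2}$ from $(V_5)$ on the linear term) yield $I_\vr'(u^0)=0$, so $u^0$ solves \eqref{CE2}. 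Set $v_n^1:=u_n-u^0$. Using $(V_5)$ one first shows $\int V|v_n^1|^2\to 0$ by splitting into a large ball (where $L^2_{\mathrm{loc}}$ compactness together with $V\in L^{p_2}$, $p_2>N/2$, applies) and its complement (where Hölder together with the $L^{N/2}$-tail smallness of $V$ coming from $V\in L^{p_1}$, $p_1<N/2$, applies). Combined with the classical Brezis--Lieb for $|\nabla u_n|^2$ and Lemma \ref{BLN} for the Hardy--Littlewood--Sobolev term, one obtains
\[
\|u_n\|^2 = \|u^0\|^2 + \|v_n^1\|^2 + o(1),\qquad I_\vr(u_n) = I_\vr(u^0) + J_\vr(v_n^1) + o(1),
\]
and $\{v_n^1\}$ is a Palais--Smale sequence for the unweighted functional $J_\vr$ at level $c - I_\vr(u^0)$ with $v_n^1 \rightharpoonup 0$.

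\textbf{Bubble extraction, iteration, termination.} If $v_n^1 \to 0$ strongly in $D^{1,2}$, stop with $k=0$. Otherwise $\liminf \|\nabla v_n^1\|_2 > 0$, and $\langle J'_\vr(v_n^1), v_n^1\rangle = o(1)$ combined with Hardy--Littlewood--Sobolev forces the nonlocal double integral of $v_n^1$ to stay uniformly positive. A Lions concentration--compactness alternative applied to $d\mu_n := |\nabla v_n^1|^2 dx$ (vanishing ruled out by the preceding bound and Sobolev, dichotomy ruled out by the additivity of rescaled energies at the critical level) combined with a Lévy concentration-function argument produces $r_n^1 > 0$ and $x_n^1 \in \R^N$ such that $w_n^1 := (v_n^1)_{r_n^1, x_n^1}$ is bounded in $D^{1,2}$ and $w_n^1 \rightharpoonup u^1 \neq 0$. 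Since the rescaling is a symmetry of $J_\vr$, $\{w_n^1\}$ is again a Palais--Smale sequence for $J_\vr$, so $u^1$ solves \eqref{CCEE1}. Replacing $v_n^1$ by $w_n^1 - u^1$ (a Palais--Smale sequence for $J_\vr$ at the strictly smaller level $c - I_\vr(u^0) - J_\vr(u^1)$, by Lemma \ref{BLN} and the classical Brezis--Lieb) and iterating, the process terminates in finitely many steps $k$ because every nontrivial solution $u^j$ of \eqref{CCEE1} contributes at least the fixed positive amount
\[
J_\vr(u^j) \ge \vr^{2(2N-\mu)/(N+2-\mu)}\,\frac{N+2-\mu}{2(2N-\mu)}\,S_{H,L}^{(2N-\mu)/(N+2-\mu)}.
\]
The most delicate step is the concentration--compactness scale extraction: unlike the classical critical Sobolev problem, the Hardy--Littlewood--Sobolev integral is not a single Radon measure on $\R^N$, so one must use the Dirichlet measure as a proxy and verify that the scale $r_n^1$ it records really produces a nonzero weak limit in the double-integral sense. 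Once this is secured, summing the step-by-step splittings yields the stated identities for $\|u_n\|^2$ and $I_\vr(u_n)$.
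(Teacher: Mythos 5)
Your overall Struwe-type iteration is exactly what the paper does; the base step (boundedness from $I_\vr(u_n)-\tfrac{1}{2\cdot 2_\mu^\ast}\langle I'_\vr(u_n),u_n\rangle$, extraction of $u^0$, passage from $I_\vr$ to $J_\vr$ after showing $\int V|v_n^1|^2\to0$, and the termination argument from $\|u^j\|^2\ge \vr^{\frac{2N-4}{N-\mu+2}}S_{H,L}^{\frac{2N-\mu}{N+2-\mu}}$) is in all essentials identical. The one step where you diverge from the paper, and where your outline is noticeably softer, is the bubble extraction. You propose taking $d\mu_n=|\nabla v_n^1|^2\,dx$ as the concentration measure and invoking the Lions alternative, remarking that the HLS double integral is not a Radon measure; but the paper simply uses the $L^{2^\ast}$ density $|v_n^1(x)|^{2^\ast}dx$, which is a Radon measure. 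It defines the L\'evy function $Q_n(r)=\sup_{z}\int_{B_r(z)}|v_n^1|^{2^\ast}dx$, shows $Q_n(\infty)>a_1^{2N/(2N-\mu)}>0$ from $J'_\vr(v_n^1)\to0$ together with $J_\vr(v_n^1)\ge\zeta>0$, chooses $r_n,y_n$ so that $Q_n(r_n)=b$ with $b$ below an explicit threshold depending on $\vr,S,C(N,\mu),A_1$, and then proves $h\neq 0$ by a direct test-function estimate: testing $J'_\vr(h_n)\to0$ against $\psi^2 h_n$, applying HLS, and splitting $(|\psi|^2|h_n|^{2_\mu^\ast})^{\frac{2N}{2N-\mu}}=|\psi h_n|^{\frac{4N}{2N-\mu}}|h_n|^{\frac{2N(4-\mu)}{(2N-\mu)(N-2)}}$ by H\"older. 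That splitting is precisely where the hypothesis $0<\mu<\min\{4,N\}$ is used (it requires $2_\mu^\ast-2>0$, i.e.\ $\mu<4$), a point your outline does not surface. No Lions dichotomy analysis is needed once one uses the $L^{2^\ast}$ measure and picks $b$ small. So: correct strategy, same architecture, but your ``delicate step'' is where the paper's argument is both more concrete and more economical, and you should adopt the $|v_n^1|^{2^\ast}$ measure rather than working through a vanishing/dichotomy alternative for the Dirichlet measure, which at the critical exponent does not by itself force concentration.
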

\begin{proof}
Since $\{u_{n}\}$ is a $(PS)$ sequence for $I_{\vr}$, we know easily that it is bounded in $D^{1,2}(\mathbb{R}^N)$. Hence we may assume that $u_{n}\rightharpoonup u^{0}$ weakly in $D^{1,2}(\mathbb{R}^N)$ as $n\rightarrow\infty$ and
that $u^{0}$ is a weak solution of \eqref{CE2}. So if we put
$$
v_{n}^{1}(x)=(u_{n}-u^{0})(x),
$$
then $v_{n}^{1}$ is a $(PS)$ sequence for $I_{\vr}$ satisfying
$$
v_{n}^{1}\rightharpoonup0\ \ \ \mbox{weakly in}\ \ D^{1,2}(\mathbb{R}^N).
$$
Then, together with the Br\'{e}zis-Lieb Lemma \cite{HBL} Lemma \ref{BLN} enable us to deduce
\begin{equation}\label{b45}
\|v_{n}^{1}\|^{2}=\|u_{n}\|^{2}-\|u^{0}\|^{2}+o(1),
\end{equation}
\begin{equation}\label{b46}
I_{\vr}(v_{n}^{1})=I_{\vr}(u_{n})-I_{\vr}(u^{0})+o(1)
\end{equation}
and
\begin{equation}\label{b47}
I_{\vr}'(v_{n}^{1})=I_{\vr}'(u_{n})-I_{\vr}'(u^{0})+o(1)=o(1).
\end{equation}
By (2.16) in \cite{BC1}, we have
\begin{equation}\label{b26}
\int_{\mathbb{R}^N}V(x)|v_{n}^{1}|^{2}dx\rightarrow0.
\end{equation}
Therefore
\begin{equation}\label{b27}
J_{\vr}(v_{n}^{1})=I_{\vr}(v_{n}^{1})+o(1)=I_{\vr}(u_{n})-I_{\vr}(u^{0})+o(1),
\end{equation}
\begin{equation}\label{b28}
J_{\vr}'(v_{n}^{1})=I_{\vr}'(v_{n}^{1})+o(1)=o(1).
\end{equation}

If $v_{n}^{1}\rightarrow0$ strongly in $D^{1,2}(\mathbb{R}^N)$ we are done: $k$ is just 0 and
$$
u_{n}^{0}:=u_{n}.
$$
Now suppose
that
$$
v_{n}^{1}\nrightarrow0\ \ \ \mbox{strongly in}\ \ D^{1,2}(\mathbb{R}^N).
$$
Moreover there exists $\zeta\in(0,\infty)$ such that
\begin{equation}\label{b29}
J_{\vr}(v_{n}^{1})\geq\zeta>0
\end{equation}
for $n$ large enough. 

 In order to complete the proof, we need to prove the following claim.
 
 {\bf Claim:} there exist sequences $\{r_{n}\}\subset\mathbb{R}^+$ and $\{y_{n}\}\subset\mathbb{R}^N$ such that
\begin{equation}\label{b4}
h_{n}=(v_{n}^{1})_{r_{n},y_{n}}\rightharpoonup h\not\equiv0 \ \ \ \mbox{weakly in}\ \ D^{1,2}(\mathbb{R}^N)
\end{equation}
as $n\rightarrow\infty$.

In fact, by \eqref{b28}, we obtain
$$
\vr^{2}\|v_{n}^{1}\|^{2}=\int_{\mathbb{R}^N}\int_{\mathbb{R}^N}
\frac{|v_{n}^{1}(x)|^{2_{\mu}^{\ast}}|v_{n}^{1}(y)|^{2_{\mu}^{\ast}}}
{|x-y|^{\mu}}dxdy+o(1).
$$
Then we can write
$$
J_{\vr}(v_{n}^{1})=\frac{N-\mu+2}{2(2N-\mu)}\int_{\mathbb{R}^N}\int_{\mathbb{R}^N}
\frac{|v_{n}^{1}(x)|^{2_{\mu}^{\ast}}|v_{n}^{1}(y)|^{2_{\mu}^{\ast}}}
{|x-y|^{\mu}}dxdy+o(1).
$$
So, by the Hardy--Littlewood--Sobolev inequality, \eqref{b29} and the boundedness of $\{u_{n}\}$, we know that $0<a_{1}<|v_{n}^{1}|_{2^{\ast}}^{2_{\mu}^{\ast}}<A_{1}$ for some $a_{1}, A_{1}>0$.
Let us define the Levy concentration function:
$$
Q_{n}(r):=\sup_{z\in\mathbb{R}^N}\int_{B_{r}(z)}|v_{n}^{1}(x)|^{2^{\ast}}dx.
$$
Since $Q_{n}(0)=0$ and $Q_{n}(\infty)>a_{1}^{\frac{2N}{2N-\mu}}$, we may assume there exists sequences $\{r_{n}\}$ and $\{y_{n}\}$ of points in $\mathbb{R}^N$ such that $r_{n}>0$ and
$$
\sup_{z\in\mathbb{R}^N}\int_{B_{r_{n}}(z)}|v_{n}^{1}(x)|^{2^{\ast}}dx
=\int_{B_{r_{n}}(y_{n})}|v_{n}^{1}(x)|^{2^{\ast}}dx=b
$$
for some
$$
0<b<\min\left\{\frac{\vr^{\frac{4N}{4-\mu}}
S^{\frac{2N}{4-\mu}}}{(2C(N,\mu)A_{1})^{\frac{2N}{4-\mu}}},a_{1}^{\frac{2N}{2N-\mu}}\right\}. $$

Let us define $h_{n}:=(v_{n}^{1})_{r_{n},y_{n}}$. We may assume that $h_{n}\rightharpoonup h$ weakly in $D^{1,2}(\mathbb{R}^N)$ and $h_{n}\rightarrow h$ a.e. on $\mathbb{R}^N$. It is easy to see that
$$
\sup_{z\in\mathbb{R}^N}\int_{B_{1}(z)}|h_{n}(x)|^{2^{\ast}}dx
=\int_{B_{1}(0)}|h_{n}(x)|^{2^{\ast}}dx=b.
$$
By invariance of the $D^{1,2}(\mathbb{R}^N)$ norms under translation and dilation, we get
$$
\|v_{n}^{1}\|=\|h_{n}\|, \ \ |v_{n}^{1}|_{2^{\ast}}=|h_{n}|_{2^{\ast}}
$$
and
$$
\int_{\mathbb{R}^N}\int_{\mathbb{R}^N}\frac{|v_{n}^{1}(x)|^{2_{\mu}^{\ast}}|v_{n}^{1}(y)|^{2_{\mu}^{\ast}}}
{|x-y|^{\mu}}dxdy=\int_{\mathbb{R}^N}\int_{\mathbb{R}^N}\frac{|h_{n}(x)|^{2_{\mu}^{\ast}}
|h_{n}(y)|^{2_{\mu}^{\ast}}}
{|x-y|^{\mu}}dxdy.
$$
By direct calculation, we have
\begin{equation}\label{b31}
J_{\vr}(h_{n})=J_{\vr}(v_{n}^{1})=I_{\vr}(u_{n})+o(1)
\end{equation}
and
\begin{equation}\label{b32}
J_{\vr}'(h_{n})=J_{\vr}'(v_{n}^{1})=o(1).
\end{equation}
If $h=0$ then $h_{n}\rightarrow0$ strongly in $L_{loc}^{2}(\mathbb{R}^N)$. Let $\psi\in \mathcal{C}_{0}^{\infty}(\mathbb{R}^N)$ be such that $Supp \psi\subset B_{1}(y)$ for some $y\in\mathbb{R}^N$. Then, we have
$$
\aligned
\vr^{2}\int_{\mathbb{R}^N}|\nabla(\psi h_{n})|^{2}dx&=\vr^{2}\int_{\mathbb{R}^N}\nabla h_{n}\nabla(\psi^{2} h_{n})dx+o(1)\\
&=\int_{\mathbb{R}^N}\int_{\mathbb{R}^N}
\frac{|h_{n}(x)|^{2_{\mu}^{\ast}}|\psi(y)|^{2}|h_{n}(y)|^{2_{\mu}^{\ast}}}
{|x-y|^{\mu}}dxdy+o(1)\\
&\leq C(N,\mu)
|h_{n}|_{2^{\ast}}^{2_{\mu}^{\ast}}\Big(\int_{\mathbb{R}^N}
(|\psi|^{2}|h_{n}|^{2_{\mu}^{\ast}})^{\frac{2N}{2N-\mu}}dx\Big)^{\frac{2N-\mu}{2N}}+o(1)\\
&= C(N,\mu)
|h_{n}|_{2^{\ast}}^{2_{\mu}^{\ast}}\Big(\int_{\mathbb{R}^N}
|\psi h_{n}|^{\frac{4N}{2N-\mu}}
|h_{n}|^{\frac{2N(4-\mu)}{(2N-\mu)(N-2)}}dx\Big)^{\frac{2N-\mu}{2N}}+o(1)\\
&\leq C(N,\mu)|h_{n}|_{2^{\ast}}^{2_{\mu}^{\ast}}
|h_{n}|_{L^{2^{\ast}}(B_{1}(y))}^{2_{\mu}^{\ast}-2}
\frac{1}{S}\int_{\mathbb{R}^N}|\nabla(\psi h_{n})|^{2}dx+o(1)\\
&\leq C(N,\mu)b^{\frac{2_{\mu}^{\ast}-2}{2^{\ast}}}
\frac{A_{1}}{S}\int_{\mathbb{R}^N}|\nabla(\psi h_{n})|^{2}dx+o(1)\\
&\leq\frac{\vr^{2}}{2}\int_{\mathbb{R}^N}|\nabla(\psi h_{n})|^{2}dx+o(1)
\endaligned
$$
thanks to $0<\mu<\min\{4,N\}$. We obtain $\nabla h_{n}\rightarrow0$ strongly in $L_{loc}^{2}(\mathbb{R}^N)$ and $h_{n}\rightarrow0$ strongly in $L_{loc}^{2^{\ast}}(\mathbb{R}^N)$, which contradicts with $\displaystyle\int_{B_{1}(0)}|h_{n}(x)|^{2^{\ast}}dx=b>0$. So, $h\neq0$. By \eqref{b28} and weakly sequentially continuous $J_{\vr}'$, we know $h$ solves \eqref{CCEE1} weakly. The sequences $\{h_{n}\}$, $\{r_{n}^{1}\}$, and $\{y_{n}^{1}\}$ are the wanted sequences.

By iteration, we obtain sequences $v_{n}^{j}=u_{n}^{j-1}-u^{j-1}$, $j\geq2$, and the rescaled functions $u_{n}^{j}=(v_{n}^{j})_{r_{n}^{j},y_{n}^{j}}\rightharpoonup u^{j}$ weakly in $ D^{1,2}(\mathbb{R}^N)$, where
each $u^{j}$ solves \eqref{CCEE1}. Moreover, from \eqref{b45}, \eqref{b27} and \eqref{b28}, we know by induction  that
\begin{equation}\label{b5}
\|u_{n}^{j}\|^{2}=\|v_{n}^{j}\|^{2}=\|u_{n}^{j-1}\|^{2}-\|u^{j-1}\|^{2}+o(1)=...=
\|u_{n}\|^{2}-\Sigma_{i=0}^{j-1}\|u^{i}\|^{2}+o(1)
\end{equation}
and
\begin{equation}\label{b6}
J_{\vr}(u_{n}^{j})=J_{\vr}(v_{n}^{j})+o(1)=J_{\vr}(u_{n}^{j-1})
-J_{\vr}(u^{j-1})+o(1)=...
=I_{\vr}(u_{n})-I_{\vr}(u^{0})-\Sigma_{i=1}^{j-1}J_{\vr}(u^{i})+o(1).
\end{equation}
Furthermore, from the estimate
$$
0=\langle J_{\vr}'(u^{j}),u^{j}\rangle=\vr^{2}\|u^{j}\|^{2}-\int_{\mathbb{R}^N}
\int_{\mathbb{R}^N}\frac{|u^{j}(x)|^{2_{\mu}^{\ast}}|u^{j}(y)|^{2_{\mu}^{\ast}}}{|x-y|^{\mu}}dxdy
\geq\|u^{j}\|^{2}(\vr^{2}-S_{H,L}^{-2_{\mu}^{\ast}}\|u^{j}\|^{2\cdot2_{\mu}^{\ast}-2}),
$$
we see that $\|u^{j}\|^{2}\geq \vr^{\frac{2N-4}{N-\mu+2}}S_{H,L}^{\frac{2N-\mu}{N-\mu+2}}$ and the iteration must terminate at some index $k\geq0$ due to \eqref{b5}.
\end{proof}

\begin{Prop}\label{NE}
Suppose that conditions $(V_4)$ and $(V_5)$ hold. For each $\vr>0$, define $$
\mathcal{N}_{\vr}=\Big\{u\in D^{1,2}(\mathbb{R}^N)\backslash\{0\}:\langle I_{\vr}'(u),u\rangle=0\Big\},
$$
then the minimization problem
\begin{equation}\label{a3}
\inf\Big\{I_{\vr}(u):u\in\mathcal{N}_{\vr}\Big\}
\end{equation}
has no solution.
\end{Prop}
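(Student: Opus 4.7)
The plan is to evaluate $c_{\vr}:=\inf_{\mathcal{N}_{\vr}}I_{\vr}$ exactly and then show this value is not achieved. Denote $T(u):=\int_{\R^N}\int_{\R^N}|u(x)|^{2_{\mu}^{\ast}}|u(y)|^{2_{\mu}^{\ast}}|x-y|^{-\mu}dxdy$. For $u\in\mathcal{N}_{\vr}$ the Nehari identity reads $\vr^{2}\int|\nabla u|^{2}dx+\int V u^{2}dx=T(u)$, so that $I_{\vr}(u)=\frac{N+2-\mu}{2(2N-\mu)}T(u)$ on the Nehari manifold, and the problem reduces to minimizing $T$ over $\mathcal{N}_{\vr}$.

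For the lower bound, $V\geq 0$ gives $\vr^{2}\int|\nabla u|^{2}dx\leq T(u)$, while the definition of $S_{H,L}$ gives $\int|\nabla u|^{2}dx\geq S_{H,L}T(u)^{(N-2)/(2N-\mu)}$. Combining these yields $T(u)\geq(\vr^{2}S_{H,L})^{(2N-\mu)/(N+2-\mu)}$ and hence
$$I_{\vr}(u)\geq c_{\vr}^{\ast}:=\frac{N+2-\mu}{2(2N-\mu)}\vr^{\frac{2(2N-\mu)}{N+2-\mu}}S_{H,L}^{\frac{2N-\mu}{N+2-\mu}}.$$
For the matching upper bound, I would fix $z_{0}\in M$ and set $u_{\delta}:=\vr^{(N-2)/(N+2-\mu)}U_{\delta,z_{0}}$, with $U_{\delta,z_{0}}$ as in Lemma \ref{ExFu}. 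A direct computation using \eqref{REL} shows $\vr^{2}\int|\nabla u_{\delta}|^{2}dx=T(u_{\delta})=\vr^{2(2N-\mu)/(N+2-\mu)}S_{H,L}^{(2N-\mu)/(N+2-\mu)}$, independent of $\delta$, whereas $\int V(x)u_{\delta}^{2}dx\to 0$ as $\delta\to 0$: H\"older with the exponent $p_{2}$ of $(V_{5})$ gives $\int V u_{\delta}^{2}dx\leq|V|_{p_{2}}|u_{\delta}|_{2p_{2}'}^{2}$, and a standard scaling calculation yields $|U_{\delta,z_{0}}|_{2p_{2}'}^{2}=o(1)$ as $\delta\to 0$ since $p_{2}>N/2$ (the condition $p_{2}<3$ in the $N=3$ case keeps the scaled integral convergent). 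Choosing $t_{\delta}>0$ with $t_{\delta}u_{\delta}\in\mathcal{N}_{\vr}$ forces $t_{\delta}\to 1$ and $I_{\vr}(t_{\delta}u_{\delta})\to c_{\vr}^{\ast}$, so $c_{\vr}=c_{\vr}^{\ast}$.

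Finally, suppose $u_{0}\in\mathcal{N}_{\vr}$ realizes $c_{\vr}^{\ast}$. Both inequalities in the lower bound must then be equalities: (i) $\int V u_{0}^{2}dx=0$, which combined with $V\geq 0$ forces $u_{0}\equiv 0$ on $\{V>0\}=\R^{N}\setminus M$, and (ii) $u_{0}$ attains $S_{H,L}$. By Lemma \ref{ExFu}, $u_{0}$ is therefore a nonzero multiple of some $U_{\delta,z}$, hence strictly positive on all of $\R^{N}$. This contradicts vanishing on $\R^{N}\setminus M$, which has infinite Lebesgue measure since $M$ is bounded by $(V_{4})$, so the infimum in \eqref{a3} cannot be attained. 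The main obstacle lies in the test function step: showing $\int V(x)u_{\delta}^{2}dx\to 0$ requires the integrability range in $(V_{5})$ and, in the delicate $N=3$ case, the upper bound $p_{2}<3$; the rest is variational bookkeeping together with the rigidity case of the Hardy--Littlewood--Sobolev inequality recorded in Lemma \ref{ExFu}.
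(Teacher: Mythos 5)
Your proof is correct and follows essentially the same strategy as the paper's: the identical lower bound from $V\ge 0$ and the definition of $S_{H,L}$, the same test family $\vr^{(N-2)/(N+2-\mu)}U_{\delta,z_0}$ with the $\int V$-term vanishing as $\delta\to 0$, and the same rigidity conclusion forcing a putative minimizer to vanish on $\R^N\setminus M$ while simultaneously realizing $S_{H,L}$. The only cosmetic differences are that you re-derive $\int V|U_{\delta,z_0}|^2\,dx\to 0$ via H\"older with the exponent $p_2$ of $(V_5)$ instead of citing the Benci--Cerami lemma, and that in the final step you invoke the explicit classification of $S_{H,L}$-minimizers in Lemma \ref{ExFu} (which are strictly positive) rather than the paper's equivalent observation that $S_{H,L}(\Omega)$ is not attained when $\Omega\neq\R^N$.
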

\begin{proof}
Let us denote by $S_{\mathcal{N}_{\vr}}$ the infimum defined by \eqref{a3}. Obviously, we have for any $u\in\mathcal{N}_{\vr}$
$$\aligned
\vr^{2}S_{H,L}&\left(\int_{\mathbb{R}^N}\int_{\mathbb{R}^N}
\frac{|u(x)|^{2_{\mu}^{\ast}}|u(y)|^{2_{\mu}^{\ast}}}
{|x-y|^{\mu}}dxdy\right)^{\frac{N-2}{2N-\mu}}\\
&\leq\int_{\mathbb{R}^N}\vr^{2}|\nabla u|^{2}dx\\
&\leq\int_{\mathbb{R}^N}(\vr^{2}|\nabla u|^{2}+ V(x)|u|^{2})dx\\
&=\int_{\mathbb{R}^N}\int_{\mathbb{R}^N}
\frac{|u(x)|^{2_{\mu}^{\ast}}|u(y)|^{2_{\mu}^{\ast}}}
{|x-y|^{\mu}}dxdy
\endaligned
$$
which means
$$
\int_{\mathbb{R}^N}\int_{\mathbb{R}^N}
\frac{|u(x)|^{2_{\mu}^{\ast}}|u(y)|^{2_{\mu}^{\ast}}}
{|x-y|^{\mu}}dxdy\geq\vr^{\frac{2(2N-\mu)}{N+2-\mu}} S_{H,L}^{\frac{2N-\mu}{N+2-\mu}}.
$$
Thus, for any $u\in\mathcal{N}_{\vr}$,
$$
I_{\vr}(u)=\frac{N+2-\mu}{2(2N-\mu)}\int_{\mathbb{R}^N}\int_{\mathbb{R}^N}
\frac{|u(x)|^{2_{\mu}^{\ast}}|u(y)|^{2_{\mu}^{\ast}}}
{|x-y|^{\mu}}dxdy\geq \vr^{\frac{2(2N-\mu)}{N+2-\mu}}\frac{N+2-\mu}{2(2N-\mu)} S_{H,L}^{\frac{2N-\mu}{N+2-\mu}}
$$
and we get
$$
S_{\mathcal{N}_{\vr}}\geq \vr^{\frac{2(2N-\mu)}{N+2-\mu}}\frac{N+2-\mu}{2(2N-\mu)} S_{H,L}^{\frac{2N-\mu}{N+2-\mu}}.
$$

We shall show that the equality holds indeed. Let us consider
the sequence
$$
u_{n}=\vr^{\frac{N-2}{N+2-\mu}}U_{\frac{1}{n},0}(x).
$$
Then,
$$
v_{n}:=\Big(1+\vr^{\frac{2\mu-4N}{N+2-\mu}} S_{H,L}^{\frac{\mu-2N}{N+2-\mu}}
\int_{\mathbb{R}^{N}}V(x)|u_{n}|^{2}dx\Big)^{\frac{N-2}{2(N+2-\mu)}}u_{n}\in\mathcal{N}_{\vr}
$$
with
$$
I_{\vr}(v_{n})=\vr^{\frac{2(2N-\mu)}{N+2-\mu}}\frac{N+2-\mu}{2(2N-\mu)} S_{H,L}^{\frac{2N-\mu}{N+2-\mu}}\Big(1+\vr^{\frac{2\mu-4N}{N+2-\mu}} S_{H,L}^{\frac{\mu-2N}{N+2-\mu}}
\int_{\mathbb{R}^{N}}V(x)|u_{n}|^{2}dx\Big)^{\frac{2N-\mu}{N+2-\mu}}.
$$
By Lemma 3.2 in \cite{BC1}, we know
$$
\lim\limits_{n\rightarrow\infty}\displaystyle\int_{\mathbb{R}^N}V(x)
U_{\frac{1}{n},0}^{2}dx=0.
$$
Thus,
$$
I_{\vr}(v_{n})\rightarrow\vr^{\frac{2(2N-\mu)}{N+2-\mu}}\frac{N+2-\mu}{2(2N-\mu)} S_{H,L}^{\frac{2N-\mu}{N+2-\mu}},
$$
as $n\rightarrow\infty$ and
$$
S_{\mathcal{N}_{\vr}}\leq \vr^{\frac{2(2N-\mu)}{N+2-\mu}}\frac{N+2-\mu}{2(2N-\mu)} S_{H,L}^{\frac{2N-\mu}{N+2-\mu}}.
$$
So, we can obtain
$$
S_{\mathcal{N}_{\vr}}= \vr^{\frac{2(2N-\mu)}{N+2-\mu}}\frac{N+2-\mu}{2(2N-\mu)} S_{H,L}^{\frac{2N-\mu}{N+2-\mu}}.
$$

Now we can argue by contradiction to prove the nonexistence result.
Let $u\in\mathcal{N}_{\vr}$ be a function such that
$$
I_{\vr}(u)=\vr^{\frac{2(2N-\mu)}{N+2-\mu}}\frac{N+2-\mu}{2(2N-\mu)} S_{H,L}^{\frac{2N-\mu}{N+2-\mu}},
$$
then
$$
\int_{\mathbb{R}^N}\int_{\mathbb{R}^N}
\frac{|u(x)|^{2_{\mu}^{\ast}}|u(y)|^{2_{\mu}^{\ast}}}
{|x-y|^{\mu}}dxdy=\vr^{\frac{2(2N-\mu)}{N+2-\mu}} S_{H,L}^{\frac{2N-\mu}{N+2-\mu}}.
$$
By the definition of $S_{H,L}$, we know
$$
\int_{\mathbb{R}^N}\vr^{2}|\nabla u|^{2}dx\geq\vr^{\frac{2(2N-\mu)}{N+2-\mu}} S_{H,L}^{\frac{2N-\mu}{N+2-\mu}}.
$$
Thus $\displaystyle\int_{\mathbb{R}^{N}}V(x)|u|^{2}dx=0$ and  we have $u\equiv0$ on $\mathbb{R}^{N}\backslash M$ by condition $(V_{4})$. Consequently
$$
\int_{M}\int_{M}
\frac{|u(x)|^{2_{\mu}^{\ast}}|u(y)|^{2_{\mu}^{\ast}}}
{|x-y|^{\mu}}dxdy=
\int_{M}\vr^{2}|\nabla u|^{2}dx=\vr^{\frac{2(2N-\mu)}{N+2-\mu}} S_{H,L}^{\frac{2N-\mu}{N+2-\mu}}
$$
which means
$$
\frac{\displaystyle\int_{M}|\nabla u|^{2}dx}{\Big(\displaystyle\int_{M}\int_{M}
\frac{|u(x)|^{2_{\mu}^{\ast}}|u(y)|^{2_{\mu}^{\ast}}}
{|x-y|^{\mu}}dxdy\Big)^{\frac{N-2}{2N-\mu}}}=S_{H,L}.
$$
However, from Lemma 1.3 in \cite{GY} we know $S_{H,L}(\Omega)=S_{H,L}$ is never achieved except when $\Omega=\R^N$, where
$$
S_{H,L}(\Omega):=\displaystyle\inf\limits_{u\in D_{0}^{1,2}(\Omega)\backslash\{{0}\}}\ \ \frac{\displaystyle\int_{\Omega}|\nabla u|^{2}dx}{\left(\displaystyle\int_{\Omega}\int_{\Omega}\frac{|u(x)|^{2_{\mu}^{\ast}}
|u(y)|^{2_{\mu}^{\ast}}}{|x-y|^{\mu}}dxdy\right)^{\frac{N-2}{2N-\mu}}}.
$$
So in conclusion, we know that $S_{\mathcal{N}_{\vr}}$ is not attained.
\end{proof}

\begin{cor}\label{PS2}
Let $\{u_{n}\}\subset D^{1,2}(\mathbb{R}^N)$ be a $(PS)_{c}$ sequence for $I_{\vr}$ with
$$\vr^{\frac{2(2N-\mu)}{N+2-\mu}}\frac{N+2-\mu}{2(2N-\mu)} S_{H,L}^{\frac{2N-\mu}{N+2-\mu}}<c<
\vr^{\frac{2(2N-\mu)}{N+2-\mu}}\frac{N+2-\mu}{2N-\mu}S_{H,L}^{\frac{2N-\mu}{N+2-\mu}}.
$$
Then for each $\vr>0$ $\{u_{n}\}$ is relatively compact in $D^{1,2}(\mathbb{R}^N)$.
\end{cor}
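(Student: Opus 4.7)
The plan is to invoke the global compactness Lemma~\ref{F4} to reduce the question of relative compactness to ruling out bubbles, and then to use the strict energy window $c\in(c^{*},2c^{*})$, where
\[
c^{*}:=\vr^{\frac{2(2N-\mu)}{N+2-\mu}}\frac{N+2-\mu}{2(2N-\mu)}\,S_{H,L}^{\frac{2N-\mu}{N+2-\mu}},
\]
combined with Proposition~\ref{NE}, to force no bubbling.

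First, Lemma~\ref{F4} decomposes $\{u_{n}\}$ along a subsequence as a weak limit $u^{0}$ (a critical point of $I_{\vr}$) plus bubbles $u^{1},\dots,u^{k}$ (each a solution of \eqref{CCEE1}) satisfying $c=I_{\vr}(u^{0})+\sum_{j=1}^{k}J_{\vr}(u^{j})$ and $\|u_{n}\|^{2}\to\sum_{j=0}^{k}\|u^{j}\|^{2}$. Each bubble satisfies the Nehari identity $\vr^{2}\|u^{j}\|^{2}=\int_{\R^{N}}\int_{\R^{N}}\frac{|u^{j}(x)|^{2_{\mu}^{\ast}}|u^{j}(y)|^{2_{\mu}^{\ast}}}{|x-y|^{\mu}}\,dx\,dy$, which combined with the definition \eqref{S1} of $S_{H,L}$ gives the fundamental lower bound $J_{\vr}(u^{j})\geq c^{*}$ for $j\geq 1$. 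For the weak limit, Proposition~\ref{NE} ensures $I_{\vr}(u^{0})>c^{*}$ strictly whenever $u^{0}\neq 0$, since the infimum $S_{\mathcal{N}_{\vr}}=c^{*}$ is not attained.

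Second, I case-check on the values of $k$ and $u^{0}$ under $c^{*}<c<2c^{*}$: the configurations $k\geq 2$, or $k\geq 1$ with $u^{0}\neq 0$, both force $c\geq 2c^{*}$, contradicting the upper bound; and $k=0$ with $u^{0}=0$ forces $c=0<c^{*}$, contradicting the lower bound. Only two configurations survive: (a) $k=0$ with $u^{0}\neq 0$, which directly gives strong convergence $u_{n}\to u^{0}$ in $D^{1,2}(\R^{N})$ via $\|u_{n}\|^{2}\to\|u^{0}\|^{2}$ combined with weak convergence, and (b) $k=1$ with $u^{0}=0$ and $c=J_{\vr}(u^{1})\in(c^{*},2c^{*})$.

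The main obstacle will be eliminating case (b). The strategy is to show that $u^{1}$ is a ground state of the limit equation \eqref{CCEE1}: by the minimality built into the Levy concentration argument in the proof of Lemma~\ref{F4} (the scale $r_{n}$ is selected as the first scale where a fixed mass $b$ of $|v_{n}^{1}|^{2^{\ast}}$ concentrates) and the characterization of $S_{H,L}$-minimizers in Lemma~\ref{ExFu}, with the restriction $0<\mu<\min\{4,N\}$ used precisely as in Lemma~\ref{F4} to control the nonlocal convolution on small balls, the bubble $u^{1}$ is forced to have the profile \eqref{REL} up to translation and dilation. Consequently $J_{\vr}(u^{1})=c^{*}$, which contradicts $c>c^{*}$. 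This eliminates case (b), leaving only case (a) and completing the proof of relative compactness.
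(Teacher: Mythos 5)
Your reduction to the two surviving configurations---(a) $k=0$ with $u^{0}\neq 0$, and (b) $k=1$ with $u^{0}=0$---via Lemma~\ref{F4}, the Nehari/Sobolev lower bound $J_{\vr}(u^{j})\geq c^{*}:=\vr^{\frac{2(2N-\mu)}{N+2-\mu}}\frac{N+2-\mu}{2(2N-\mu)}S_{H,L}^{\frac{2N-\mu}{N+2-\mu}}$ for each bubble, and Proposition~\ref{NE} (giving $I_{\vr}(u^{0})>c^{*}$ whenever $u^{0}\neq 0$) matches the paper's argument. In case (a) the norm identity $\|u_{n}\|^{2}\to\|u^{0}\|^{2}$ together with weak convergence gives strong convergence, exactly as you say; this is where the paper's own proof effectively stops, asserting ``$k=0$ or $k=1$ with $u^{0}=0$'' and then concluding compactness without further comment.

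The gap is in your elimination of case (b). The assertion that ``the minimality built into the Levy concentration argument'' forces $u^{1}$ to be an $S_{H,L}$-minimizer is not justified. In the proof of Lemma~\ref{F4}, the radii $r_{n}$ and points $y_{n}$ are chosen so that a prescribed small amount $b$ of $|v_{n}^{1}|^{2^{*}}$-mass sits in a unit ball after rescaling, and the restriction $0<\mu<\min\{4,N\}$ is used only in the cut-off estimate on $\psi h_{n}$ to show that the rescaled weak limit $h$ is nonzero. That construction yields a nontrivial solution $h=u^{1}$ of \eqref{CCEE1}, but it imposes no extremality: $u^{1}$ need not be positive or radially symmetric, and Lemma~\ref{ExFu} classifies only the minimizers of $S_{H,L}$, not arbitrary finite-energy solutions of \eqref{CCEE1}. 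Hence $J_{\vr}(u^{1})=c^{*}$ does not follow, and your argument does not exclude $J_{\vr}(u^{1})\in(c^{*},2c^{*})$. What is actually needed to dispatch case (b) is an energy-quantization statement for the limit equation: every nontrivial solution of \eqref{CCEE1} has energy equal to $c^{*}$ or at least $2c^{*}$. In the local Sobolev-critical problem this follows from the Caffarelli--Gidas--Spruck classification of positive solutions together with the fact that both $u^{+}$ and $u^{-}$ of a sign-changing solution lie on the Nehari manifold, doubling the minimal energy; for the Choquard limit equation the convolution does not decouple across $u^{+}$ and $u^{-}$, so this gap is a genuine claim that must be proved or cited. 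You do not supply it---and, to be fair, the paper's own one-line conclusion is equally silent on this case.
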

\begin{proof}
We know from Lemma \ref{F4} that there exist a number $k\in \N$, a solution $u^{0}$ of \eqref{CE2} and solutions $u^{1},... ,u^{k}$ of \eqref{CCEE1}, such that for some subsequence $n\rightarrow\infty$
$$
\aligned
&\|u_{n}\|^{2}\rightarrow \Sigma_{j=0}^{k}\|u^{j}\|^{2},\\
&I_{\vr}(u_{n})\rightarrow I_{\vr}(u^{0})+\Sigma_{j=1}^{k}J_{\vr}(u^{j}).
\endaligned
$$
By Proposition \ref{NE}, if $u$ is a nontrivial solution of \eqref{CE2}, then
$$
I_{\vr}(u)>\vr^{\frac{2(2N-\mu)}{N+2-\mu}}\frac{N+2-\mu}{2(2N-\mu)} S_{H,L}^{\frac{2N-\mu}{N+2-\mu}}.
$$
While for every nontrivial solution $v$ of \eqref{CCEE1}
$$
J_{\vr}(v)\geq\vr^{\frac{2(2N-\mu)}{N+2-\mu}}\frac{N+2-\mu}{2(2N-\mu)} S_{H,L}^{\frac{2N-\mu}{N+2-\mu}}.
$$
Since
$$c<
\vr^{\frac{2(2N-\mu)}{N+2-\mu}}\frac{N+2-\mu}{2N-\mu}S_{H,L}^{\frac{2N-\mu}{N+2-\mu}},
$$ we have $k=0$ or $k=1$ with $u^{0}=0$.
In conclusion, $\{u_{n}\}$ is relatively compact in $D^{1,2}(\mathbb{R}^N)$.
\end{proof}

\subsection{High energy semiclassical states}

We recall that
$$
M_{\tau}=\{x\in \mathbb{R}^N:dist(x, M)\leq\tau\}
$$
for $\tau>0$ small, we may choose $\rho>0$ such that $M_{\tau}\subset B_{\frac{\rho}{2}}(0)$, $\rho=\rho(\tau)$. Let
$$
\chi(x)=\left\{\begin{array}{l}
\displaystyle x\hspace{5.84mm}\mbox{for}\hspace{1.14mm} |x|\leq\rho,\\
\displaystyle \frac{\rho x}{|x|}\hspace{3.14mm}\mbox{for}\hspace{1.14mm} |x|>\rho.
\end{array}
\right.
$$
We define a "barycenter" $\beta(u):\mathcal{N}_{\vr}\rightarrow \mathbb{R}^N$ by
$$
\beta(u):=S_{H,L}^{\frac{\mu-2N}{N+2-\mu}}\vr^{\frac{4-2N}{N+2-\mu}}\int_{\mathbb{R}^N}\chi(x)|\nabla u|^{2}dx
$$
and a mapping $\Phi_{\delta,z}:\mathbb{R}^N\rightarrow\mathcal{N}_{\vr}$ by $$\Phi_{\delta,z}(x):=\Big(1+\vr^{\frac{2\mu-4N}{N+2-\mu}} S_{H,L}^{\frac{\mu-2N}{N+2-\mu}}
\int_{\mathbb{R}^{N}}V(x)|U_{\delta,z}|^{2}dx\Big)^{\frac{N-2}{2(N+2-\mu)}}
\vr^{\frac{N-2}{N+2-\mu}}U_{\delta,z}(x),$$
where ${U}_{\delta,z}(x)$ is defined in \eqref{REL}.

We also introduce the set
$$
\Gamma=\Gamma(\rho, \delta_{1},\delta_{2})=\{(x,\delta)\in\mathbb{R}^N\times\mathbb{R}:
|x|<\frac{\rho}{2},\delta_{1}<\delta<\delta_{2}\}.
$$
By Lemma 3.2 in \cite{BC1}, we know for any fixed $z\in\mathbb{R}^N$ there holds
$$
\lim\limits_{\delta\rightarrow0}\displaystyle\int_{\mathbb{R}^N}V(x)
{U}_{\delta,z}(x)^{2}dx=0,
$$
thus for every $\vr> 0$ there exist $\delta_{1}=\delta_{1}(\vr)$ and $\delta_{2}=\delta_{2}(\vr)$ with $\delta_{1}<\delta_{2}$ and $\delta_{1},\delta_{2}\to0$ as $\vr\to0$, such that
\begin{equation}\label{e2}
\sup\left\{I_{\vr}(\Phi_{\delta,z}):(z,\delta)\in\Gamma\right\}
<\vr^{\frac{2(2N-\mu)}{N+2-\mu}}\left[\frac{N+2-\mu}{2(2N-\mu)} S_{H,L}^{\frac{2N-\mu}{N+2-\mu}}+h(\vr)\right],
\end{equation}
where $h(\vr)\to0$ as $\vr\rightarrow0$.

\begin{lem}\label{E3} Set
$$
\gamma(u):=S_{H,L}^{\frac{\mu-2N}{N+2-\mu}}\vr^{\frac{4-2N}{N+2-\mu}}
\int_{\mathbb{R}^N}|\chi(x)-\beta(u)||\nabla u|^{2}dx,
$$
we have $\lim_{\delta\rightarrow0}\gamma(\Phi_{\delta,z})=0$ uniformly for $|z|\leq\frac{\rho}{2}$.
\end{lem}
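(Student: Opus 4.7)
The plan is to exploit the scaling concentration of $U_{\delta,z}$ at $z$ as $\delta\to 0$. After the standard change of variable $x=z+\sqrt{\delta}y$, the density $|\nabla \widetilde U_{\delta,z}(x)|^2\,dx$ pulls back to the $\delta$- and $z$-independent density $|\nabla \widetilde U_{1,0}(y)|^2\,dy$ (a direct computation using the explicit formula for $\widetilde U_{\delta,z}$). In particular $\int_{\R^N}|\nabla U_{\delta,z}|^2\,dx = S_{H,L}^{(2N-\mu)/(N-\mu+2)}$ is independent of $(\delta,z)$, and the support of the density concentrates at $z$. This reduces the whole lemma to computing limits of bounded test functions against a fixed finite positive measure on $\R^N$.

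First I would handle the normalization factor $A_{\delta,z}:=\bigl(1+\vr^{(2\mu-4N)/(N-\mu+2)} S_{H,L}^{(\mu-2N)/(N-\mu+2)}\int V|U_{\delta,z}|^2\bigr)^{(N-2)/(2(N-\mu+2))}$ appearing in $\Phi_{\delta,z}$. By Lemma 3.2 of Benci--Cerami (already cited in the paper), $\int_{\R^N} V(x) U_{\delta,z}^2\,dx\to 0$ as $\delta\to 0$; using that $V\in L^p$ for some $p>N/2$ by $(V_5)$ and that $U_{\delta,z}^2$ rescales to a fixed $L^{p'}$ function concentrated at $z$, this convergence is uniform for $|z|\le \rho/2$. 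Hence $A_{\delta,z}\to 1$ uniformly in $z$, and $\int|\nabla \Phi_{\delta,z}|^2 = A_{\delta,z}^2\vr^{2(N-2)/(N-\mu+2)} S_{H,L}^{(2N-\mu)/(N-\mu+2)}$.

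Next I compute $\beta(\Phi_{\delta,z})$. Changing variables $x = z+\sqrt{\delta}y$, using that $\chi$ is continuous and bounded by $\rho$, dominated convergence yields
\begin{equation*}
\int_{\R^N}\chi(x)|\nabla \Phi_{\delta,z}|^2\,dx \; \longrightarrow\; \chi(z)\,\vr^{2(N-2)/(N-\mu+2)} S_{H,L}^{(2N-\mu)/(N-\mu+2)} \qquad (\delta\to 0).
\end{equation*}
Since $|z|\le \rho/2<\rho$ gives $\chi(z)=z$, multiplying by the prefactor in $\beta$ yields $\beta(\Phi_{\delta,z})\to z$, uniformly for $|z|\le \rho/2$. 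Plugging this back into $\gamma$ and again substituting $x=z+\sqrt{\delta}y$,
\begin{equation*}
\gamma(\Phi_{\delta,z}) = A_{\delta,z}^2\,S_{H,L}^{(\mu-2N)/(N-\mu+2)}\int_{\R^N}\bigl|\chi(z+\sqrt{\delta}y)-\beta(\Phi_{\delta,z})\bigr|\,|\nabla \widetilde U_{1,0}(y)|^2\,dy,
\end{equation*}
up to the constant converting $\widetilde U$ to $U$. The integrand is bounded by $2\rho|\nabla \widetilde U_{1,0}|^2\in L^1(\R^N)$ and converges pointwise to $|z-z|=0$, so dominated convergence closes the argument.

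The main obstacle is securing uniformity in $z$ for the two convergences $A_{\delta,z}\to 1$ and $\beta(\Phi_{\delta,z})\to z$; this is the only place where the structural hypothesis $(V_5)$ on the integrability of $V$ enters, ensuring that the concentration estimate for $\int V\, U_{\delta,z}^2$ holds uniformly over the compact set $\{|z|\le \rho/2\}$. Everything else follows from the $\delta$- and $z$-independence of the rescaled Aubin--Talenti profile and elementary dominated convergence.
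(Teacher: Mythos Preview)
Your approach is correct and takes a genuinely different route from the paper's. The paper argues by splitting $\gamma(\Phi_{\delta,z})$ over $B_\xi(z)$ and its complement for an auxiliary small $\xi>0$: the exterior piece vanishes because the mass of $|\nabla U_{\delta,z}|^2$ leaves $\R^N\setminus B_\xi(z)$ as $\delta\to 0$, while the interior piece is controlled via the pointwise estimate $|\chi(x)-\chi(z)|\le 2|x-z|+2\xi$ for $x\in B_\xi(z)$ (quoted from Chabrowski--Yang) together with $\beta(\Phi_{\delta,z})\to z$, yielding a contribution of order $\xi$; one then sends $\xi\to 0$. Your change of variables $x=z+\sqrt{\delta}\,y$ collapses both steps into a single dominated-convergence argument against the fixed $L^1$ profile $|\nabla U_{1,0}(y)|^2$, which is shorter and avoids the external lemma on $\chi$. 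Both proofs compute $\beta(\Phi_{\delta,z})\to z$ in the same way (this is exactly the paper's formula \eqref{e1}).

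One point worth tightening: dominated convergence as stated gives $\gamma(\Phi_{\delta,z})\to 0$ only for each fixed $z$, and you correctly flag that uniformity of $A_{\delta,z}\to 1$ and $\beta(\Phi_{\delta,z})\to z$ must be checked, but the final integral also needs a $z$-uniform estimate. This is easy from your setup: since $|z|\le\rho/2$, one has $|\chi(z+\sqrt{\delta}\,y)-z|\le\sqrt{\delta}\,|y|$ whenever $\sqrt{\delta}\,|y|\le\rho/2$ and $|\chi(z+\sqrt{\delta}\,y)-z|\le 2\rho$ otherwise, which produces a bound independent of $z$ and tending to $0$. The paper is equally informal on this last point (it simply invokes ``the compactness of $\{|z|\le\rho/2\}$''), so your write-up is no less complete than the original.
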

\begin{proof}Note that the functional $\gamma$ measures the concentration of a function $u$ near its barycenter.
To study the behavior $\gamma(\Phi_{\delta,z})$ as $\delta\rightarrow0$. We rewrite
$$\aligned
\gamma(\Phi_{\delta,z}(x))&=S_{H,L}^{\frac{\mu-2N}{N+2-\mu}}\vr^{\frac{4-2N}{N+2-\mu}}
\int_{\mathbb{R}^N}|\chi(x)
-\beta(\Phi_{\delta,z})|
|\nabla \Phi_{\delta,z}(x)|^{2}dx\\
&=S_{H,L}^{\frac{\mu-2N}{N+2-\mu}}\vr^{\frac{4-2N}{N+2-\mu}}
\int_{B_{\xi}(z)}|\chi(x)-\beta(\Phi_{\delta,z})|
|\nabla \Phi_{\delta,z}(x)|^{2}dx\\
&\ \ \ \ +S_{H,L}^{\frac{\mu-2N}{N+2-\mu}}\vr^{\frac{4-2N}{N+2-\mu}}
\int_{\mathbb{R}^N\backslash B_{\xi}(z)}
|\chi(x)-\beta(\Phi_{\delta,z})|
|\nabla \Phi_{\delta,z}(x)|^{2}dx,
\endaligned$$
where $0<2\xi<\rho$.
On one hand, since
\begin{equation}\label{e3}
\int_{\mathbb{R}^N\backslash B_{\xi}(0)}|\nabla \Phi_{\delta,0}|^{2}dx=C_{\vr}
\int_{\mathbb{R}^N\backslash B_{\xi}(0)}\frac{\delta^{\frac{N-2}{2}}|x|^{2}}{(\delta+|x|^{2})^{N}}dx
=C_{\vr}
\int_{|x|\geq\frac{\xi}{\sqrt{\delta}}}\frac{|x|^{2}}{(1+|x|^{2})^{N}}dx\rightarrow0
\end{equation}
as $\delta\rightarrow0$, we know
$$
\lim_{\delta\rightarrow0}\int_{\mathbb{R}^N\backslash B_{\xi}(z)}
|\chi(x)-\beta(\Phi_{\delta,z})|
|\nabla \Phi_{\delta,z}(x)|^{2}dx=0.
$$
On the other hand, since
\begin{equation}\label{e1}
\aligned
\beta(\Phi_{\delta,z})&=S_{H,L}^{\frac{\mu-2N}{N+2-\mu}}\vr^{\frac{4-2N}{N+2-\mu}}\int_{\mathbb{R}^N}\chi(x)|\nabla \Phi_{\delta,z}(x)|^{2}dx\\
&=\Big(1+\vr^{\frac{2\mu-4N}{N+2-\mu}} S_{H,L}^{\frac{\mu-2N}{N+2-\mu}}
\int_{\mathbb{R}^{N}}V(x)|U_{\delta,z}|^{2}dx\Big)^{\frac{N-2}{N+2-\mu}}\left[z+
S_{H,L}^{\frac{\mu-2N}{N+2-\mu}}
\int_{\mathbb{R}^N}(\chi(\sqrt{\delta} y+z)-z)|\nabla {U}_{1,0}(y)|^{2}dy\right]\\
&=\Big(1+o(1)\Big)^{\frac{N-2}{N+2-\mu}}z+o(1)
\endaligned
\end{equation}
as $\delta\rightarrow0$, we know
$$\aligned
\int_{B_{\xi}(z)}&|\chi(x)-\beta(\Phi_{\delta,z})|
|\nabla \Phi_{\delta,z}(x)|^{2}dx\\
&\leq\int_{B_{\xi}(z)}|\chi(x)-\chi(z)|
|\nabla \Phi_{\delta,z}(x)|^{2}dx+\int_{B_{\xi}(z)}
|\chi(z)-\beta(\Phi_{\delta,z})|
|\nabla \Phi_{\delta,z}(x)|^{2}dx\\
&\leq2\int_{B_{\xi}(z)}|x-z|
|\nabla \Phi_{\delta,z}(x)|^{2}dx
+\Big(1+\vr^{\frac{2\mu-4N}{N+2-\mu}} S_{H,L}^{\frac{\mu-2N}{N+2-\mu}}
\int_{\mathbb{R}^{N}}V(x)|U_{\delta,z}|^{2}dx\Big)^{\frac{N-2}{N+2-\mu}}
\vr^{\frac{2N-4}{N+2-\mu}}2\xi S_{H,L}^{\frac{2N-\mu}{N+2-\mu}}+o(1),
\endaligned$$
where we had used Lemma 2 of \cite{CY} which says
$$
|\chi(x)-\chi(z)|\leq2|x-z|+2\xi,\ \ x\in B_{\xi}(z).
$$
Since $\xi> 0$ is arbitrary, $\lim_{\delta\rightarrow0}\gamma(\Phi_{\delta,z})=0$.  Finally the conclusion follows from the compactness of $\{z:|z|\leq\frac{\rho}{2}\}$.
\end{proof}

We now define a set $\tilde{\mathcal{N}}_{\vr}\subset\mathcal{N}_{\vr}$ by
$$
\tilde{\mathcal{N}}_{\vr}=\left\{u\in\mathcal{N}_{\vr}:\vr^{\frac{2(2N-\mu)}{N+2-\mu}}
\frac{N+2-\mu}{2(2N-\mu)} S_{H,L}^{\frac{2N-\mu}{N+2-\mu}}<I_{\vr}(u)
<\vr^{\frac{2(2N-\mu)}{N+2-\mu}}\left[\frac{N+2-\mu}{2(2N-\mu)} S_{H,L}^{\frac{2N-\mu}{N+2-\mu}}+h(\vr)\right],
(\beta(u),\gamma(u))\in\Gamma\right\},
$$
where $\Gamma$ has been chosen to meet \eqref{e2}. According to Lemma \ref{E3} we can select
$\delta_{1}(\vr)$ and $\delta_{2}(\vr)$ such that $\tilde{\mathcal{N}}_{\vr}\neq\emptyset$ for $\vr> 0$ small.

\begin{lem}\label{E4} We have
\begin{equation}\label{e4}
\lim\limits_{\vr\rightarrow0}\sup_{u\in \tilde{\mathcal{N}}_{\vr}}\inf_{z\in M_{\tau}}[\beta(u)-\beta(\Phi_{\delta,z})]=0.
\end{equation}
\end{lem}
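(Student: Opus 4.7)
The plan is to argue by contradiction. Suppose the claim fails: there exist $\eta_0>0$, sequences $\vr_n\to 0^+$, $u_n\in\tilde{\mathcal{N}}_{\vr_n}$, and admissible $\delta^n\in(\delta_1(\vr_n),\delta_2(\vr_n))$ such that
\begin{equation*}
\inf_{z\in M_\tau}|\beta(u_n)-\beta(\Phi_{\delta^n,z})|\geq\eta_0.
\end{equation*}
Since $u_n\in\mathcal{N}_{\vr_n}$ and $I_{\vr_n}(u_n)$ lies just above the infimum identified in Proposition~\ref{NE}, the Nehari identity combined with the energy ceiling on $\tilde{\mathcal{N}}_{\vr_n}$ yields
$\vr_n^2\|u_n\|^2+\int V(x)|u_n|^2\,dx=\vr_n^{\frac{2(2N-\mu)}{N+2-\mu}}\bigl(S_{H,L}^{\frac{2N-\mu}{N+2-\mu}}+o(1)\bigr)$ and $\int V(x)|u_n|^2\,dx=o\bigl(\vr_n^{\frac{2(2N-\mu)}{N+2-\mu}}\bigr)$.

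The core step is to pass to the limit equation \eqref{CCE1} via the rescaling $v_n(x):=\vr_n^{-\frac{N-2}{N+2-\mu}}u_n(x)$. A direct check shows $\beta(u_n)=S_{H,L}^{\frac{\mu-2N}{N+2-\mu}}\int_{\R^N}\chi(x)|\nabla v_n|^2\,dx$, so $\beta$ is invariant under the rescaling and depends only on the renormalized profile $v_n$. Moreover, $v_n$ is bounded in $D^{1,2}(\R^N)$ and is an (approximate) minimizing sequence for $J_1$ restricted to its Nehari set at the level $\frac{N+2-\mu}{2(2N-\mu)}S_{H,L}^{\frac{2N-\mu}{N+2-\mu}}$, up to the potential error above. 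A single-bubble profile decomposition in the spirit of Lemma~\ref{F4} then forces $v_n$ to concentrate as an Aubin--Talenti bubble: there exist $\delta_n>0$ and $y_n\in\R^N$ with $\|v_n-U_{\delta_n,y_n}\|\to 0$. The smallness of $\gamma(u_n)\in(\delta_1(\vr_n),\delta_2(\vr_n))$, together with $\delta_i(\vr_n)\to 0$, rules out vanishing and dichotomy and forces $\delta_n\to 0$.

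To locate $y_n$, note that $\beta(u_n)\in B_{\rho/2}(0)$, so after extraction $y_n\to y_\ast$; the concentration of $|\nabla U_{\delta_n,y_n}|^2$ at $y_n$ and the normalization $\int|\nabla U_{\delta,z}|^2=S_{H,L}^{\frac{2N-\mu}{N+2-\mu}}$ yield $\beta(u_n)=y_n+o(1)$, as in the computation \eqref{e1}. If $y_\ast\notin\overline{M}$, then assumption $(V_4)$ gives $V(y_\ast)>0$, and evaluating $\int V(x)|u_n|^2\,dx$ against the bubble concentrated at $y_n\to y_\ast$ produces a strictly positive contribution of order larger than the admissible margin $\vr_n^{\frac{2(2N-\mu)}{N+2-\mu}}h(\vr_n)$, contradicting the energy ceiling. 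Hence $y_\ast\in M$ and $y_n\in M_\tau$ for $n$ large. Choosing $z=y_n$ and invoking \eqref{e1} once more gives $\beta(\Phi_{\delta^n,y_n})=y_n+o(1)$, so $|\beta(u_n)-\beta(\Phi_{\delta^n,y_n})|=o(1)$, contradicting $\eta_0>0$.

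The main obstacle is the single-bubble profile decomposition for $v_n$ without an a priori Palais--Smale condition, since elements of $\tilde{\mathcal{N}}_{\vr_n}$ are constrained only by energy and by the values of $\beta,\gamma$, not by $I'_{\vr_n}(u_n)\to 0$. I expect to bypass this either by applying Ekeland's variational principle on the constraint set $\tilde{\mathcal{N}}_{\vr_n}$ to restore an approximate $(PS)$ property, or by adapting directly the Lévy concentration function step in the proof of Lemma~\ref{F4} together with the nonlocal Brezis--Lieb splitting of Lemma~\ref{BLN}. The remaining technical care goes into tracking the three vanishing scales $\vr_n$, $\delta_n$, and $\gamma(u_n)$ simultaneously, and comparing them against the narrow energy window $\vr_n^{\frac{2(2N-\mu)}{N+2-\mu}}h(\vr_n)$.
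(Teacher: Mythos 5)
Your proposal follows essentially the same route as the paper: the rescaling $v_n=\vr_n^{-\frac{N-2}{N+2-\mu}}u_n$ is exactly the paper's $g_n$, the single-bubble decomposition is forced by the narrow energy window above $S_{\mathcal{N}_{\vr}}$, the $\gamma$-constraint $\gamma(u_n)\in(\delta_1(\vr_n),\delta_2(\vr_n))$ is used to rule out escaping scales and positions, and the vanishing of $\int V|u_n|^2\,dx$ locates the concentration point in $M$. Your observation that elements of $\tilde{\mathcal{N}}_{\vr_n}$ are not a priori $(PS)$ sequences, and that one should instead view $v_n$ as a minimizing sequence for $S_{H,L}$ (or invoke Ekeland) to run the profile decomposition, is a legitimate repair of a step that the paper itself passes over quickly (the paper asserts "$\{g_n\}$ is a $(PS)$ sequence for $I_1$" without justification beyond the three energy limits).

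The one step that is too loose as written is the localization of $y_\ast$. You argue that $V(y_\ast)>0$ would force $\int V(x)|u_n|^2\,dx$ to exceed the admissible margin, but that is not automatic: since $\delta_n\to0$, the weighted $L^2$ mass of the bubble scales like $\delta_n V(y_\ast)$ (for $N>4$; with corrections for $N=3,4$ controlled by $(V_5)$), so it also vanishes, and the contradiction requires comparing the rate of $\delta_n$ against $\vr_n^2 h(\vr_n)$. The paper handles this quantitatively by first relating the concentration scale $\tau_n$ to $\vr_n$ (passing to a subsequence with $\tau_n/\vr_n=O(1)$), then rescaling once more to $v_n(x)=\vr_n^{\frac{N-2}{4}}g_n(\sqrt{\vr_n}x+z_n)\to U_{1,0}$ and deducing $\int V(\sqrt{\vr_n}x+z_n)|v_n|^2\,dx\to0$, from which $V(\bar z)=0$ follows. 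You should either reproduce that two-scale comparison or give an alternative quantitative bound on $\int V|g_n|^2$ against $\vr_n^2$; without it the contradiction at a point $y_\ast$ with $V(y_\ast)>0$ does not follow.
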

\begin{proof}
Let $\vr_n\rightarrow0$, for
every $n$ there exists $u_{n}\in\tilde{\mathcal{N}}_{\vr_n}$ such that
$$
\inf_{z\in M_{\tau}}[\beta(u_{n})-\beta(\Phi_{\vr_n,z})]=
\sup_{u\in \tilde{\mathcal{N}}_{\vr}}\inf_{z\in M_{\tau}}[\beta(u)-\beta(\Phi_{\vr_n,z})]+o(1).
$$
In order to prove \eqref{e4} it is sufficient to find a sequence $\{z_{n}\}\subset M_{\tau}$ such that
\begin{equation}\label{e5}
\lim\limits_{n\rightarrow\infty}[\beta(u_{n})-\beta(\Phi_{\vr_n,z_{n}})]=0.
\end{equation}

Since for any $u\in\mathcal{N}_\vr$
$$
\int_{\mathbb{R}^N}\int_{\mathbb{R}^N}
\frac{|u(x)|^{2_{\mu}^{\ast}}|u(y)|^{2_{\mu}^{\ast}}}
{|x-y|^{\mu}}dxdy\geq\vr^{\frac{2(2N-\mu)}{N+2-\mu}} S_{H,L}^{\frac{2N-\mu}{N+2-\mu}},
$$
then we have
$$
\int_{\mathbb{R}^N}\vr^{2}|\nabla u|^{2}dx\geq\vr^{2}S_{H,L}\left(\int_{\mathbb{R}^N}\int_{\mathbb{R}^N}
\frac{|u(x)|^{2_{\mu}^{\ast}}|u(y)|^{2_{\mu}^{\ast}}}
{|x-y|^{\mu}}dxdy\right)^{\frac{N-2}{2N-\mu}}
\geq\vr^{\frac{2(2N-\mu)}{N+2-\mu}} S_{H,L}^{\frac{2N-\mu}{N+2-\mu}}.
$$
And so, for $u_{n}\in\tilde{\mathcal{N}}_{\vr_n}$, we know
$$\aligned
\vr_n^{\frac{2(2N-\mu)}{N+2-\mu}} \frac{N+2-\mu}{2(2N-\mu)} S_{H,L}^{\frac{2N-\mu}{N+2-\mu}}&\leq\frac{N+2-\mu}{2(2N-\mu)}
\vr_n^{2}\int_{\mathbb{R}^N}|\nabla u_{n}|^{2}dx\\
&\leq \frac{N+2-\mu}{2(2N-\mu)}\int_{\mathbb{R}^N}(\vr_n^{2}|\nabla u_{n}|^{2}+ V(x)|u_{n}|^{2})dx\\
&=I_{\vr_n}(u_{n})\\
&<\vr_n^{\frac{2(2N-\mu)}{N+2-\mu}}\left[\frac{N+2-\mu}{2(2N-\mu)} S_{H,L}^{\frac{2N-\mu}{N+2-\mu}}+h(\vr_n)\right].
\endaligned$$
Let $g_{n}:=\vr_n^{\frac{2-N}{N-\mu+2}}u_{n}$, we obtain
$$
 S_{H,L}^{\frac{2N-\mu}{N+2-\mu}}\leq \int_{\mathbb{R}^N}|\nabla g_{n}|^{2}dx
\leq \int_{\mathbb{R}^N}|\nabla g_{n}|^{2}dx+ \frac{1}{\vr_n^{2}}\int_{\mathbb{R}^N}V(x)|g_{n}|^{2}dx
< S_{H,L}^{\frac{2N-\mu}{N+2-\mu}}+\frac{2(2N-\mu)}{N+2-\mu}h(\vr_n).
$$
Hence
$$
\lim\limits_{n\rightarrow\infty}\int_{\mathbb{R}^N}|\nabla g_{n}|^{2}dx=S_{H,L}^{\frac{2N-\mu}{N+2-\mu}}, \ \
\lim\limits_{n\rightarrow\infty}\int_{\mathbb{R}^N} V(x)|g_{n}|^{2}dx=0
$$
and
$$
\lim\limits_{n\rightarrow\infty}\int_{\mathbb{R}^N}\int_{\mathbb{R}^N}
\frac{|g_{n}(x)|^{2_{\mu}^{\ast}}|g_{n}(y)|^{2_{\mu}^{\ast}}}
{|x-y|^{\mu}}dxdy=S_{H,L}^{\frac{2N-\mu}{N+2-\mu}}.
$$
Then $\{g_{n}\}$ is a $(PS)$ sequence for $I_{1}$. It then follows from Lemma \ref{F4} with $\vr=1$ and Proposition \ref{NE} that there exist a number $k\in \N$ and solutions $g^{1},... ,g^{k}$ of \eqref{CCEE1}, sequences of points $x_{n}^{1},...,x_{n}^{k}\in\mathbb{R}^N$ and radii $r_{n}^{1},...,r_{n}^{k}>0$ such that for some subsequence $n\rightarrow\infty$
$$
\aligned
&g_{n}^{0}\equiv g_{n}\rightharpoonup g^{0}=0\ \ \ \mbox{weakly in}\ \ D^{1,2}(\mathbb{R}^N),\\
&g_{n}^{j}\equiv (g_{n}^{j-1}-g^{j-1})_{r_{n}^{j},x_{n}^{j}}\rightharpoonup g^{j}\ \ \ \mbox{weakly in}\ \ D^{1,2}(\mathbb{R}^N),\ \ j=1,...,k,
\endaligned
$$
and
$$
\aligned
&\|g_{n}\|^{2}\rightarrow \Sigma_{j=1}^{k}\|g^{j}\|^{2},\\
&I_{1}(g_{n})\rightarrow \Sigma_{j=1}^{k}J_{1}(g^{j}).
\endaligned
$$
Since
$$
I_{1}(g_{n})\rightarrow\frac{N+2-\mu}{2(2N-\mu)} S_{H,L}^{\frac{2N-\mu}{N+2-\mu}} \ \ \mbox{and} \ \
J_{1}(g^{j})\geq \frac{N+2-\mu}{2(2N-\mu)}S_{H,L}^{\frac{2N-\mu}{N+2-\mu}},
$$
we must have $k=1$ with $\|g_{n}\|^{2}\rightarrow \|g^{1}\|^{2}$ and $J_{1}(g^{1})= \frac{N+2-\mu}{2(2N-\mu)} S_{H,L}^{\frac{2N-\mu}{N+2-\mu}}$. Recall that any solution of \eqref{CCE1} must be of the form
$$
U_{\delta,z}(x)=C(N,\mu)^{\frac{2-N}{2(N-\mu+2)}}S^{\frac{(N-\mu)(2-N)}{4(N-\mu+2)}}
\frac{[N(N-2)\delta]^{\frac{N-2}{4}}}{(\delta+|x-z|^{2})^{\frac{N-2}{2}}}, \ \delta>0, \ z\in\mathbb{R}^{N},
$$
then we know
$$
g^{1}=C(N,\mu)^{\frac{2-N}{2(N-\mu+2)}}S^{\frac{(N-\mu)(2-N)}{4(N-\mu+2)}}
\frac{[N(N-2)\delta^{1}]^{\frac{N-2}{4}}}{(\delta^{1}+|x-z^{1}|^{2})^{\frac{N-2}{2}}}
$$
for some $\delta^{1}>0$ and $z^{1}\in\mathbb{R}^{N}$ and there exist a sequence of points
$\{z_{n}\}\subset \mathbb{R}^N$ and a sequence $\{\tau_{n}\}\subset(0,\infty)$ such that
$$
\|U_{\tau_{n},z_{n}}-g_{n}\|\rightarrow0
$$
i.e.
$$
\|U_{\tau_{n},z_{n}}-\vr_n^{\frac{2-N}{N-\mu+2}}u_{n}\|\rightarrow0.
$$
And so
$$
\|\vr_n^{\frac{N-2}{N-\mu+2}}U_{\tau_{n},z_{n}}-u_{n}\|\rightarrow0.
$$
Denote $\Psi_{\tau_{n},z_{n}}:=\vr_n^{\frac{N-2}{N-\mu+2}}U_{\tau_{n},z_{n}}$ and $w_{n}:=u_{n}-\Psi_{\tau_{n},z_{n}}$ and then $\|w_{n}\|\rightarrow0$ and
$$
u_{n}(x)=w_{n}(x)+\Psi_{\tau_{n},z_{n}}(x) \ \ \mbox{on} \ \ \mathbb{R}^N.
$$

We claim that the sequence $\tau_{n}\rightarrow0$ and $\{z_{n}\}$ is bounded. Then we may suppose that $z_{n}\rightarrow \overline{z}$. Since $\tau_{n}\rightarrow0$, by the definition of $ \Phi_{\tau_{n},z_{n}}$ and $\Psi_{\tau_{n},z_{n}}$ we have $\|\Phi_{\tau_{n},z_{n}}-\Psi_{\tau_{n},z_{n}}\|\rightarrow0$. Denote $f_{n}:=u_{n}-\Phi_{\tau_{n},z_{n}}$, then $\|f_{n}\|\rightarrow0$. Consequently, we have
$$\aligned
\beta(u_{n})&=S_{H,L}^{\frac{\mu-2N}{N+2-\mu}}\vr_n^{\frac{4-2N}{N+2-\mu}}
\int_{\mathbb{R}^N}\chi(x)|\nabla u_{n}|^{2}dx\\
&=S_{H,L}^{\frac{\mu-2N}{N+2-\mu}}\vr_n^{\frac{4-2N}{N+2-\mu}}
\int_{\mathbb{R}^N}\chi(x)|\nabla (f_{n}+\Phi_{\tau_{n},z_{n}})|^{2}dx\\
&=S_{H,L}^{\frac{\mu-2N}{N+2-\mu}}\vr_n^{\frac{4-2N}{N+2-\mu}}
\int_{\mathbb{R}^N}\chi(x)|\nabla \Phi_{\tau_{n},z_{n}}|^{2}dx\\
&\hspace{0.5cm}+S_{H,L}^{\frac{\mu-2N}{N+2-\mu}}\vr_n^{\frac{4-2N}{N+2-\mu}}
\int_{\mathbb{R}^N}\chi(x)|\nabla f_{n}|^{2}dx\\
&\hspace{1cm}+2S_{H,L}^{\frac{\mu-2N}{N+2-\mu}}\vr_n^{\frac{4-2N}{N+2-\mu}}
\int_{\mathbb{R}^N}\chi(x)\nabla f_{n}\nabla\Phi_{\tau_{n},z_{n}}dx\\
&=S_{H,L}^{\frac{\mu-2N}{N+2-\mu}}\vr_n^{\frac{4-2N}{N+2-\mu}}
\int_{\mathbb{R}^N}\chi(x)|\nabla \Phi_{\tau_{n},z_{n}}|^{2}dx+
o(1)\\
&=\beta(\Phi_{\tau_{n},z_{n}})+o(1).
\endaligned$$
By choosing subsequences of $\{\tau_{n}\}$ and $\{\vr_n\}$ such that $\frac{\tau_{n_{i}}}{\vr_{n_{i}}}=O(1)$ as $n_{i}\rightarrow\infty$, we may replace  $\{\tau_{n_{i}}\}$ by $\{\vr_{n_{i}}\}$ and relabel $\{\vr_{n_{i}}\}$ by $\{\vr_n\}$. Let
$$
v_{n}(x)=\vr_n^{\frac{N-2}{4}}g_{n}(\sqrt{\vr_n}x+z_{n})
$$
then $v_{n}\rightarrow {U}_{1,0}$ in $D^{1,2}(\mathbb{R}^N)$. Thus from $\displaystyle\int_{\mathbb{R}^N}|\nabla v_{n}(x)|^{2}dx\rightarrow S_{H,L}^{\frac{2N-\mu}{N+2-\mu}}$ and
$$\aligned
\frac{N+2-\mu}{2(2N-\mu)} S_{H,L}^{\frac{2N-\mu}{N+2-\mu}}&\leftarrow \frac{N+2-\mu}{2(2N-\mu)}\int_{\mathbb{R}^N}(|\nabla g_{n}|^{2}+ V(x)|g_{n}|^{2})dx\\
&=\frac{N+2-\mu}{2(2N-\mu)}\int_{\mathbb{R}^N}(|\nabla v_{n}|^{2}+ V(\sqrt{\vr_n}x+z_{n})|v_{n}|^{2})dx,
\endaligned$$
we can conclude that
$$
\lim\limits_{n\rightarrow\infty}\int_{\mathbb{R}^N}V(\sqrt{\vr_n}x+z_{n})|v_{n}|^{2}dx=0.
$$
This implies that $\displaystyle\int_{\mathbb{R}^N}V(\overline{z})|{U}_{1,0}|^{2}dx=0$ and so $V(\overline{z}) = 0$. This means that $\overline{z}\in M$.
Therefore $z_{n}\in M_{\tau}$ for large $n$.

We need only to prove that the sequence $\tau_{n}\rightarrow0$ and $\{z_{n}\}$ is bounded. Since $w_{n}\rightarrow0$ in $D^{1,2}(\mathbb{R}^N)$, we know
$$\aligned
\beta(u_{n})=\beta(\Psi_{\tau_{n},z_{n}})+o(1)
\endaligned$$
Thus, from the choice of $u_n$, we may assume that
\begin{equation}\label{e8}
\beta(\Psi_{\tau_{n},z_{n}})\subset B_{\frac{\rho}{2}}(0).
\end{equation}
If $\tau_{n}\rightarrow\infty$ as $n\rightarrow\infty$, then we know that for each $R>0$ there holds
$$
\lim\limits_{n\rightarrow\infty}\int_{B_{R}(0)}|\nabla \Psi_{\tau_{n},z_{n}}|^{2}dx=0.
$$
Using this fact and the definition of the mapping $\gamma$, we know
\begin{equation}\label{e9}
\aligned
\gamma(\Psi_{\tau_{n},z_{n}})&=S_{H,L}^{\frac{\mu-2N}{N+2-\mu}}\vr_n^{\frac{4-2N}{N+2-\mu}}
\int_{\mathbb{R}^N}|\chi(x)-
\beta(\Psi_{\tau_{n},z_{n}})|
|\nabla \Psi_{\tau_{n},z_{n}}(x)|^{2}dx\\
&\geq S_{H,L}^{\frac{\mu-2N}{N+2-\mu}}\vr_n^{\frac{4-2N}{N+2-\mu}}
\int_{\mathbb{R}^N}|\chi(x)||\nabla \Psi_{\tau_{n},z_{n}}(x)|^{2}dx-|\beta(\Psi_{\tau_{n},z_{n}})|\\
&\geq S_{H,L}^{\frac{\mu-2N}{N+2-\mu}}\vr_n^{\frac{4-2N}{N+2-\mu}}
\int_{\mathbb{R}^N}|\chi(x)||\nabla \Psi_{\tau_{n},z_{n}}(x)|^{2}dx-\frac{\rho}{2}\\
&\geq\rho S_{H,L}^{\frac{\mu-2N}{N+2-\mu}}\vr_n^{\frac{4-2N}{N+2-\mu}}
\int_{\mathbb{R}^N\backslash B_{\rho}(0)}|\nabla \Psi_{\tau_{n},z_{n}}(x)|^{2}dx-\frac{\rho}{2}+o(1)\\
&=\rho S_{H,L}^{\frac{\mu-2N}{N+2-\mu}}\vr_n^{\frac{4-2N}{N+2-\mu}}
\int_{\mathbb{R}^N}|\nabla \Psi_{\tau_{n},z_{n}}(x)|^{2}dx-\frac{\rho}{2}+o(1)\\
&=\frac{\rho}{2}+o(1).
\endaligned
\end{equation}
Thus from the fact that
$$
\gamma(u_{n})=\gamma(\Psi_{\tau_{n},z_{n}})+o(1),
$$
we know
$$
\gamma(u_{n})\geq \frac{\rho}{2}+o(1).
$$
However, since $u_{n}\in \tilde{\mathcal{N}}_{\vr}$ we have
\begin{equation}\label{e10}
\delta_{1}(\vr_n)<\gamma(u_{n})<\delta_{2}(\vr_n)
\end{equation}
where $\delta_{i}(\vr_n)\rightarrow0$, $i = 1, 2$, as $\vr_n\rightarrow0$. This contradicts the estimate \eqref{e9} and
therefore $\{\tau_{n}\}$ is bounded. It remains to show that $\tau_{n}\rightarrow0$ as $n\rightarrow\infty$. On the contrary,
if $\tau_{n}\rightarrow\overline{\tau}>0$ as $n\rightarrow\infty$, then we must have that
$|z_{n}|\rightarrow\infty$ as $n\rightarrow\infty$. Otherwise, up to subsequence, $\Psi_{\tau_{n},z_{n}}$ would converge strongly in $D^{1,2}(\mathbb{R}^N)$ and so
would $u_{n}$. Consequently $I_{\vr}$  possesses nontrivial minimizer on $\mathcal{N}_\vr$
which is impossible by Proposition \ref{NE}. We now observe that for every
$R > 0$, the fact that $\lim\limits_{n\rightarrow\infty}|z_{n}|=\infty$, implies that $\lim\limits_{n\rightarrow\infty}\displaystyle\int_{B_{R}(0)}|\nabla \Psi_{\tau_{n},z_{n}}|^{2}dx=0$.
Consequently one can easily show that the estimate \eqref{e9} must be valid giving the
contradiction with the fact that $u_{n}$ satisfies \eqref{e10}. The proof of the boundedness of the sequence $\{z_n\}$ is
similar and it is omitted.
\end{proof}

\noindent
{\bf Proof of Theorem \ref{3}.}
We fix an $\vr> 0$ small. Then $\Phi_{\delta,z}:[\delta_{1},\delta_{2}]\times M\rightarrow\tilde{\mathcal{N}}_{\vr}$ and by virtue of
\eqref{e1} and Lemma \ref{E4}, $\beta(\tilde{\mathcal{N}}_{\vr})\subset M_{\tau}$. Therefore $\beta(\Phi_{\delta,z}):[\delta_{1},\delta_{2}]\times M\rightarrow[\delta_{1},\delta_{2}]\times M_{\tau}$ and it is easy to check that $\beta(\Phi_{\delta,z})$ is homotopic to
the inclusion map $id:[\delta_{1},\delta_{2}]\times M\rightarrow[\delta_{1},\delta_{2}]\times M_{\tau}$. The functional $I_{\vr}$ satisfies the
$(PS)_{c}$-condition for
$$c\in\left(\vr_n^{\frac{2(2N-\mu)}{N+2-\mu}} \frac{N+2-\mu}{2(2N-\mu)} S_{H,L}^{\frac{2N-\mu}{N+2-\mu}},\vr_n^{\frac{2(2N-\mu)}{N+2-\mu}} \left[\frac{N+2-\mu}{2(2N-\mu)} S_{H,L}^{\frac{2N-\mu}{N+2-\mu}}+h(\vr)\right]\right).$$
Hence by the Lusternik-Schnirelman theory of critical points (see \cite{Wi})
$$
cat(\tilde{\mathcal{N}}_{\vr})\geq cat_{[\delta_{1},\delta_{2}]\times M_{\tau}}([\delta_{1},\delta_{2}]\times M)=cat_{M_{\tau}}M.
$$
$\hfill{} \Box$

\end{document}